\documentclass[12pt]{article}
\usepackage{amsthm,amsmath,amsfonts,amssymb}
\usepackage{authblk}
\usepackage{fullpage}
\usepackage{mathtools,bm}
\usepackage{url}
\usepackage{bbm}
\usepackage{breqn}

\usepackage{mathrsfs} 
\usepackage{float}
\usepackage{todonotes}

\def\ignore#1{{}}

\def\bbbp{\mathbb{P}}

 \makeatletter
   \def\@fnsymbol#1{\ensuremath{\ifcase#1\or 1\or
   2\or 3\or \|\or **\or 4
   \or 5 \else\@ctrerr\fi}}
    \makeatother

\newtheorem{proposition}{Proposition}
\newtheorem{theorem}{Theorem}
\newtheorem{lemma}{Lemma} 
\newtheorem{corollary}{Corollary}

\newtheorem{remark}{Remark}
\newcommand{\p}{\mathbb{P}}
\newcommand{\e}{\mathbb{E}}
\newcommand{\N}{\mathbb{N}}
\newcommand{\Z}{\mathbb{Z}}
\newcommand{\A}{\mathcal{A}}

\newcommand{\mcL}{\mathcal{L}}



\providecommand{\keywords}[1]
{
  \small	
 \noindent\textbf{Keywords:} #1
}

\providecommand{\msc}[1]
{
  \small	
 \noindent\textbf{MSC:} #1
}

\begin{document}
\title{Markov chains arising from biased random derangements}

\author[a,c]{Poly H. da Silva\thanks{phd2120@columbia.edu}}
\affil[a]{{\footnotesize Columbia University, Department of Statistics, 1255 Amsterdam Avenue, New York, NY 10027, USA}}
\author[b,c]{Arash Jamshidpey\thanks{aj2963@columbia.edu}}
\affil[b]{{\footnotesize Columbia University, Department of Mathematics, 2990 Broadway, New York, NY 10027, USA}}
\author[a,c]{Simon Tavar\'e\thanks{st3193@columbia.edu}}
\affil[c]{{\footnotesize Columbia University, Irving Institute for Cancer Dynamics, Schermerhorn Hall, Suite 601, 1190 Amsterdam Avenue, New York, NY 10027, USA}}

\maketitle

\begin{abstract} 
We explore the cycle types of a class of biased random derangements, described as a random game played by some children labeled $1,\cdots,n$.  Children join the game one by one, in a random order, and randomly form some circles of size at least $2$, so that no child is left alone.  The game gives rise to the cyclic decomposition of a random derangement, inducing an exchangeable random partition.  The rate at which the circles are closed varies in time, and at each time $t$, depends on the number of individuals who have not played until t.  A $\{0,1\}$-valued Markov chain $ X^n$ records the cycle type of the corresponding random derangement in that any $1$ represents a hand-grasping event that closes a circle.  Using this,  we study the cycle counts and sizes of the random derangements and their asymptotic behavior. We approximate the total variation distance between the reversed chain of $X^n$ and its weak limit $X^\infty$, as $n\to\infty$. We establish conditional (and push-forward) relations between $X^n$ and a generalization of the Feller coupling, given that no $11$-pattern ($1$-cycle) appears in the latter. We extend these relations to $X^\infty$ and apply them to investigate some asymptotic behaviors of $X^n$.

\end{abstract}

\keywords{Generalized Feller Coupling, biased random permutations and derangements, conditioning, exchangeable random partitions, probabilistic combinatorics}

\msc{60C05, 60J10, 60F05, 05A05, 65C40}




\section{Introduction}
This paper studies biased signed or unsigned random derangements as random permutations conditioned on having no fixed points. A simplified version of the random  derangement models studied in this paper may be described as the initial configuration of the following playground game played by $n$ children at camp. The children are labeled  $1,\dots,n$ and their right hands and left hands by $+1,\dots,+n$ and $-1,\dots,-n$, respectively. We denote by $|i|$ a child with hand $i\in [\pm n]:=\{\pm 1,\cdots,\pm n\}$. The goal is to form an ordered collection of circles of children holding hands, with some children looking in and others looking out of the circles. Of course no circle of size $1$ is allowed, which means a child is not allowed to grasp their other hand. The process is completed in $n$ steps. Each step corresponds to a hand grasping event. First, a label $\bar\sigma_n$ is chosen, uniformly at random, from $[+n]:=\{+1,\cdots,+n\}$. Child $|\bar\sigma_n|$ starts the game by initiating the first circle. This is represented by $(\bar\sigma_n,\dots$, where $\bar\sigma_n\in [+n]$  (i.e. it is positive) indicates that child $|\bar\sigma_n|$ looks in. While looking in, with their left hand child $|\bar\sigma_n|$ grasps a hand $\bar\sigma_{n-1}$ of another child chosen uniformly at random from $[\pm n]\setminus \{\pm|\bar\sigma_n|\}$. We record this as an incomplete circle $(\bar\sigma_n, \bar\sigma_{n-1},\dots$. Note that $\bar\sigma_{n-1}\in [+n]$  or it is equivalently a right hand, if and only if child $|\bar\sigma_{n-1}|$ looks in.  In the second step, child $|\bar\sigma_{n-1}|$, with their free hand  grasps a free hand chosen uniformly at random from the set of all remaining $2(n-2)+1$ free hands.  As a result, child $|\bar\sigma_{n-1}|$ chooses to grab $\bar\sigma_n$ (the right hand of $|\bar\sigma_n|$) with probability $1/(2(n-2)+1)$.  In this case,  the first circle is completed and $\bar\sigma_{n-2}$ is chosen uniformly at random from $[+n]\setminus\{\pm\bar\sigma_n,\pm\bar\sigma_{n-1}\}$. Child $|\bar\sigma_{n-2}|$ starts the second circle.  Again, we assume $|\bar\sigma_{n-2}|$ looks in. This is recorded as $(\bar\sigma_n,\bar\sigma_{n-1})(\bar\sigma_{n-2},...$. In the case that child $|\bar\sigma_{n-1}|$ chooses to not grab $\bar\sigma_n$, they grab a hand $\bar\sigma_{n-2}$ chosen uniformly at random from $[\pm n]\setminus \{\pm |\bar\sigma_n|,\pm |\bar\sigma_{n-1}|\}$.  We write $(\bar\sigma_n,\bar\sigma_{n-1},\bar\sigma_{n-2},\dots$ for the updated circle.  

In the same manner, right after the grasping event which results in the completion of a circle,  a new circle gets started by a child whose label is chosen uniformly at random from the set of all unused labels.   We always assume the child who starts a circle looks in, so we pick a positive label for the first child of each circle.  Note also that, in this way,  there is exactly one incomplete circle before every hand grasping event.  In each step,  the last child in the incomplete circle chooses a hand uniformly at random from the set of all remaining free hands which do not make any circle of size $1$, and grasps it with their free hand (left hand if they are the child who starts the circle). Assuming that $|\bar\sigma_2|$, the last-but-one child who joins the game, always grasps a randomly chosen hand of the last child $|\bar\sigma_1|$, so as not to leave the last child alone,  the process ends up with an ordered collection of circles of size at least $2$. We emphasize that the side at which a child looks is represented by signs $+/-$ such that $+i$ means the child $i\in [n]:=\{1,\dots,n\}$ looks in while $-i$ indicates the child $i$ looks out (cf. Figure~\ref{ex}).
\begin{figure}[!h]
\begin{center}
\includegraphics[scale=0.38]{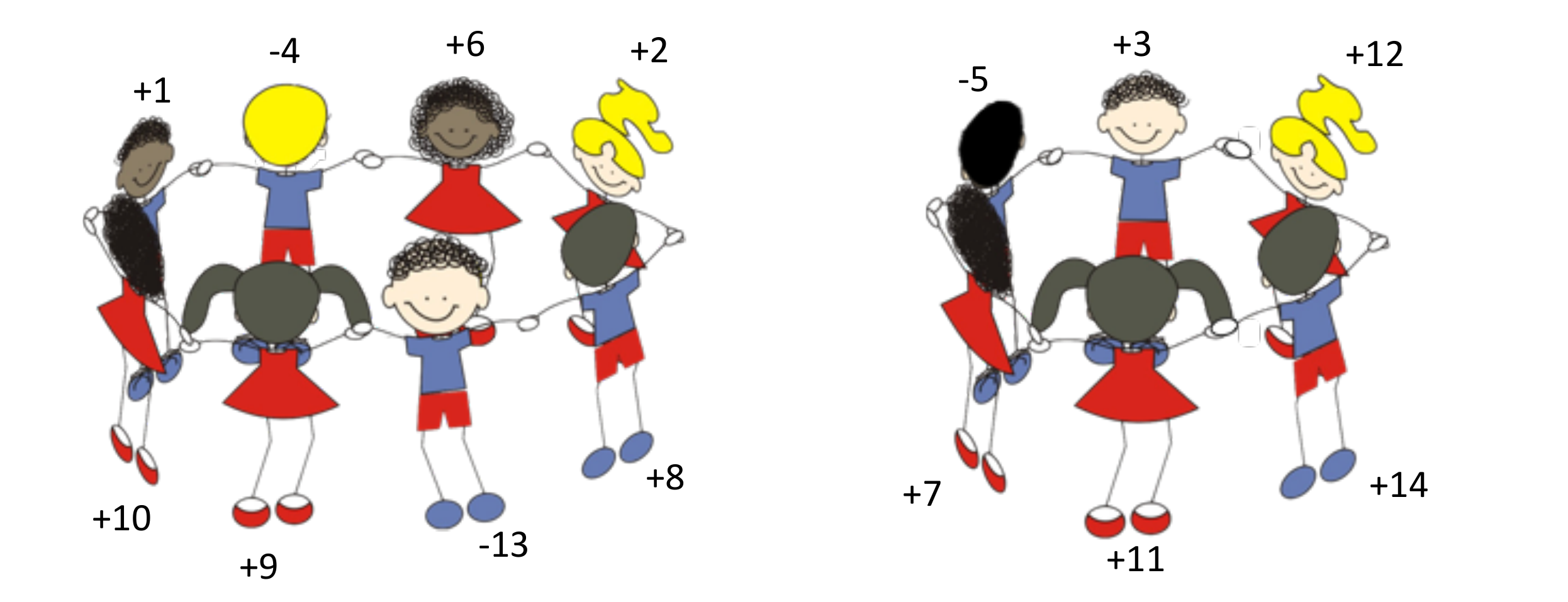} 
\end{center}
\caption{An example of a playground game with 14 children forming two circles with some children looking in and some looking out of the circle. The associated signed permutation is $\bar \sigma = (+1 -4 +6 +2 +8 -13 +9+10)(+3+12+14+11+7-5)$.  Modified figure from~\cite{fig}.}
\label{ex}
\end{figure} 

Note that when there are $n-j$ children left who are not playing yet and there are at least two children in the current incomplete circle, with their free hand the last child in the incomplete circle grasps any of the other $2(n-j)+1$ free hands (including the free hand of the first child in the incomplete circle) with the same probability, $0.5/(0.5+n-j)$.  We can generalize this model by changing the proportional weights of closing a circle versus grasping the right and left hands of children who are not playing yet.  More precisely, we assume that the weight of the right hand of the first child in the circle is given by $\theta$, while the right and the left hands of each of the other $n-j$ children with both hands free are given by $\kappa$ and $1-\kappa$, respectively. This means that the probability of closing the circle is $\theta/(\theta+n-j)$, and the probability that a new child joins the circle while they look in (or look out) is $\kappa (n-j)/(\theta+n-j)$ (or $(1-\kappa)(n-j)/(\theta+n-j)$, respectively).

The two-parameter weight structure allows us to formally define the playground game process in two steps. To see this,  let $p_1=0$, $p_2=1$, and  $p_i=(i-1)/(i-1+\theta)$ for $i\geq 3$. Let $\mathbf{p}=(p_i)_{i\geq 1}$ and $\mathbf{q}=(q_i)_{i\geq 1}$, where $q_i=1-p_i$. In the first step, we can model the sizes of the circles in the order of their formation via a $\{\pm 0,1\}$-valued Markov chain $\bar{X}^{n,\mathbf{p},\kappa}=(\bar{X}_{n+1-j}^n)_{j=0}^n$. More precisely, set $\bar{X}_{n+1}^{n}=1$ and for $j\leq n$,  at the $j^{th}$ step of the playground game, let $\bar{X}_{n+1-j}^{n}=1$ if the last child in the incomplete circle grasps the right hand of the first child in that circle and hence closes the circle, let $\bar{X}_{n+1-j}^{n}=+0$ if he or she grasps the right hand of a child with two free hands,  and let $\bar{X}_{n+1-j}^{n}=-0$ if he or she grasps the left hand of a child with two free hands. Letting $\bar{\mathcal{Q}}_r^{n,\mathbf{p}}(u,v):=\bbbp(\bar X_r^{n,\mathbf{p}} = v \mid \bar X_{r+1}^{n,\mathbf{p}} = u)$ for $1\leq r\leq n$ and $u,v\in \{\pm 0,1\}$, this can be formalized as a $\{\pm 0,1\}$-valued Markov chain with the transition probability matrices
$$\bar P_{r}^{n,\mathbf{p}} := \left(
\begin{array}{c@{~~~}c@{~~~}c}
\bar{\mathcal{Q}}_r^{n,\mathbf{p}}(+0, +0) & \bar{\mathcal{Q}}_r^{n,\mathbf{p}}(+0, -0) & \bar{\mathcal{Q}}_r^{n,\mathbf{p}}(+0, 1) \\[2pt]
\bar{\mathcal{Q}}_r^{n,\mathbf{p}}(-0, +0) & \bar{\mathcal{Q}}_r^{n,\mathbf{p}}(-0,-0 ) & \bar{\mathcal{Q}}_r^{n,\mathbf{p}}(-0, 1) \\[2pt]
\bar{\mathcal{Q}}_r^{n,\mathbf{p}}(1, +0) & \bar{\mathcal{Q}}_r^{n,\mathbf{p}}(1,-0) & \bar{\mathcal{Q}}_r^{n,\mathbf{p}}(1, 1) \\
\end{array}
\right)=\left(
\begin{array}{c@{~~~}c@{~~~}c}
\kappa p_r & (1-\kappa)p_r &q_r\\
\kappa p_r & (1-\kappa)p_r &q_r\\
\kappa &1-\kappa & 0\\
\end{array}
\right),$$
for $2\leq r\leq n-1$ and
$$
\bar P_n^{n,\mathbf{p}}= \left(
\begin{array}{c@{~~~}c@{~~~}c}
\kappa &1-\kappa & 0\\
\kappa &1-\kappa & 0\\
\kappa &1-\kappa & 0\\
\end{array}
\right),  \ \  \bar P_1^{n,\mathbf{p}}=\left(
\begin{array}{c@{~~~}c@{~~~}c}
0 & 0 & 1\\
0 & 0 & 1\\
0 & 0 & 1\\
\end{array}
\right).
$$
Having defined $\bar{X}^{n,\mathbf{p},\kappa}$,  in the second step, we use an auxiliary randomization to generate a random signed permutation representing the playground game process.  More explicitly,  choose $\bar\sigma_n$ unifromly at random from $[+n]$. Suppose we have $\bar{\sigma}_n,\cdots,\bar{\sigma}_{i+1}$ for $i<n$. Then pick $\bar\sigma_i$ unifromly at random from $\{+1,\cdots, +n\}\setminus \{\pm \bar{\sigma}_{i+1},\cdots, \pm\bar{\sigma}_n\}$ if $\bar{X}_{i+1}=1$ or $\bar{X}_{i+1}=+0$, and pick $\bar\sigma_i$ uniformly at random from $ \{- 1,\cdots,- n\}\setminus \{\pm \bar{\sigma}_{i+1},\cdots, \pm\bar{\sigma}_n\}$ if $\bar{X}_{i+1}=-0$. Denote by $K_n=\sum_{j=1}^n \bar{X}_j$ the number of circles in the $\bar{X}^{n,\mathbf{p},\kappa}$ process and by $A_j^{(n)}$, $j\leq K_n$,  the size of the $j^{th}$ circle in the $\bar{X}^{n,\mathbf{p},\kappa}$ process, in order of formation of circles. Letting $\tau_j=n+1-\sum_{i=1}^j A_j^{(n)}$, we can represent the random playground game with  parameters $\theta$ and $\kappa$ by
\[
(\bar\sigma_n,\cdots,\bar\sigma_{\tau_1})(\bar\sigma_{\tau_1+1},\cdots,\bar\sigma_{\tau_2})\cdots (\bar\sigma_{\tau_{k-1}+1},\cdots,\bar\sigma_{\tau_k}), \ \ k=K_n.
\]
Note that the circles are placed in order,  and $(x_1,\cdots,x_r)$ means that $|x_i|$ holds a hand of $|x_{i+1}|$, for $i=1,\cdots,r-1$ and $|x_r|$ holds the right hand of $|x_1|$. Furthermore, child $|x_i|$ looks in if and only if $x_i>0$. Forgetting the signs and the order among the circles in this representation, one obtains the cycle decomposition of a random permutation on $[n]$. It is clear from the definition that the random partition induced from the cycles of this random permutation is exchangeable. 
On the other hand, keeping the signs gives rise to a specific random signed permutation on $\{\pm 1,\cdots,,\pm n\}$, which will not be discussed further in this paper.

The case $p_i=(i-1)/(\theta+i-1)$ for $\theta>0$ is very special and can be extended to any $p_i\in (0,1)$ for $i\geq 3$. Henceforth, for $\mathbf{p}=(p_i)_{i=1}^\infty$ in the above construction, we assume that  $p_1=0$, $p_2=1$, and $p_i\in (0,1)$, for $i\geq 3$. 

The random signed permutations discussed above may be interpreted in various ways. For instance, one can consider $n$ strands, labeled $1,\cdots,n$. Each strand $i$ may represent a gene or a marker with two ends denoted by $-i$ and $+i$, where the signs indicate the genes' polarity or orientation (also called strandedness in the biological literature). As a result of the process discussed above, one can obtain a random genome with some circular chromosomes. See~\cite{bioinformatics} for further applications of signed permutations in Mathematical Genomics.  As another example, tying randomly the ends of $n$ cooked spaghetti strands such that the ends of any strand are not tied together (i.e. there is no circle of size $1$), one can construct a random spaghetti loop as discussed in~\cite{tavare2021}.  
In this paper, we only follow the playground game perspective.  Of particular interest is the probability distribution of cycle numbers and sizes, their asymptotic behavior, and their relationship to the so-called generalized Feller coupling conditional on having no cycle of size $1$.

\section{Description of results}
This section briefly discusses the main results of this paper. In particular, in Section~\ref{sec:unsigned}, we explain some results, mostly related to the number and type of the cycles in $\bar{X}^{n,\mathbf{p},\kappa}$, for which the sign information is not needed, hence can be ignored. This can be done via a projected $\{0,1\}$-valued Markov chain which is simply obtained from $\bar{X}^{n,\mathbf{p},\kappa}$ by dropping the signs from $-0$ and $+0$. In Section~\ref{sec:sign}, we explain how more complex quantities for $\bar{X}^{n,\mathbf{p},\kappa}$, specifically those for which the signs are involved, can be derived from the corresponding quantities for the projected chain. The outline of this paper will be given at the end of this section (see Section~\ref{sec:outline}).

\subsection{When signs do not matter}\label{sec:unsigned}
As already mentioned, we can ignore and drop the signs in studying those properties of $\bar{X}^{n,\mathbf{p},\kappa}$ for which signs are not involved. The major examples are the total number and the type of the cycles. More formally, by projecting $\{\pm 0,1\}$ onto $\{0,1\}$ such that $\pm 0\mapsto 0$ and $1\mapsto 1$, we derive a $\{0,1\}$-valued inhomogeneous Markov chain $X^{n,\mathbf{p}}:= (X_{n+1}=1, X_n,...,X_1)$ from $\bar{X}^{n,\mathbf{p},\kappa}$, whose transition matrices are given by 
\begin{equation*}\label{MCX}
P_{r}^{n,\mathbf{p}} :=\left(
\begin{array}{c@{~~~}c}
\bbbp(X_r^{n,\mathbf{p}} = 0 \mid X_{r+1} ^{n,\mathbf{p}}= 0) & \bbbp(X_r ^{n,\mathbf{p}}= 1 \mid X_{r+1}^{n,\mathbf{p}} = 0) \\[4pt]
\bbbp(X_r ^{n,\mathbf{p}}= 0 \mid X_{r+1}^{n,\mathbf{p}}= 1) & \bbbp(X_r ^{n,\mathbf{p}}= 1 \mid X_{r+1} ^{n,\mathbf{p}}= 1)\\
\end{array}
\right)= \left(
\begin{array}{c@{~~~}c}
p_r & q_r \\
1 & 0
\end{array}
\right),
\end{equation*}
for $r = n-1,\ldots,2$,  and
\begin{equation*}\label{MCX12}
P_{n}^{n,\mathbf{p}}= \left(
\begin{array}{c@{~~~}c}
1 & 0\\
1 & 0
\end{array}
\right),
\quad
P_{1}^{n,\mathbf{p}} = \left(
\begin{array}{c@{~~~}c}
0 & 1\\
0 & 1
\end{array}
\right),
\end{equation*}
where $p_1=0$, $p_2=1$, and $p_i\in (0,1)$, for $i\geq 3$. Note that under the above projection, the law of $X^{n,\mathbf{p}}$ can be obtained as the push-forward measure of the law of $\bar{X}^{n,\mathbf{p},\kappa}$.  For $\theta>0$, we denote by $\eta=\eta^{n,\theta}$ the special case of $X^{n,\mathbf{p}}$ with $p_r=(r-1)/(r-1+\theta)$. We also denote by $P_r^{n,\eta}=P_r^{n,\eta^\theta}, \ r=1,\cdots,n$, the transition probability matrices of $\eta$. In what follows, we refer to $X^{n,\mathbf{p}}$ and $\eta^{n,\theta}$ as the \emph{Random Derangement} and \emph{Playground Game} (PG) processes, respectively. 

In Section~\ref{sec:mc}, after some useful results such as the marginal distribution of $X^n$ and $\eta^n$ (Section~\ref{sec:marginalX}), we find the expected number of cycles and $j$-cycles and their asymptotics for $X^n$ and $\eta^n$; see Theorems~\ref{thm:meanX} and~\ref{cor:ECk}, Proposition~\ref{prop:limECeta}, and Lemmas~\ref{kn} and~\ref{Thm:ECk}.

In Section~\ref{sec:conditional}, we represent the law of the derangement Markov chain $X^{n,\mathbf{p}}$ as the law of the cycle type of a biased random permutation conditioned on not having any fixed points. In fact, we will see that the cycle type of this biased random permutation may be generated as the \textit{spacings} (each $j$-scpacing represents a $j$-cycle) between consecutive $1$s in a sequence of independent $\{0,1\}$-valued random variables called the \textit{Generalized Feller Coupling}. 

To better describe this, note that the dependency between the pairs of $\eta^{n,\theta}_{r+1}$ and $\eta^{n,\theta}_r$ (or more generally between $X^{n,\mathbf{p}}_{r+1}$ and $X^{n,\mathbf{p}}_r$) is a result of the discrepancy between the rows of $P_r^{n,\eta}$ (or $P_r^{n,\mathbf{p}}$). While the second row in $P_r^{n,\eta}$ ensures the occurance of no consecutive $1$s, the first row distributes the same chances to $0$ and $1$ as does the $r^{th}$ component of the classic \emph{Feller coupling}, introduced in~\cite{abt92}. So by replacing the lower row of $P_r^{n,\eta}$ by $(\frac{r-1}{r-1+\theta},\frac{\theta}{r-1+\theta})$, one can obtain the Feller coupling with parameter $\theta$ that is the sequence of independent Bernoulli random variables $\tilde{\xi}^\theta:=(\tilde{\xi}_i^\theta)_{i=1}^\infty$ with 
$
\p_\theta(\tilde{\xi}_i^\theta = 1)=1-\p_\theta(\tilde{\xi}_i^\theta = 0)=\frac{\theta}{i-1+\theta}
$.  As before, we drop $\theta$ if there is no risk of confusion. Let $C_j^{\tilde \xi}(n)$ be the number of $j$-spacings between $1$s in $1,\tilde{\xi}_n,\cdots, \tilde{\xi}_1$, i.e., the number of sub-patterns $1 \, 0^{j-1} \, 1$ in it.  The distribution of the cycle counts $C^{\tilde{\xi}}(n):=(C_1^{\tilde \xi}(n),\cdots,C_n^{\tilde \xi}(n))$ is given by the well-known Ewens Sampling Formula (ESF)~\cite{wje72}
\begin{equation*}
\begin{split}
ESF_n(\theta) (c_1,\cdots,c_n):=& \p_\theta(C_1^{\tilde \xi}(n)=c_1,\cdots,C_n^{\tilde \xi}(n)=c_n)\\
=& \mathbbm{1}\left(\sum\limits_{j=1}^n jc_j=n\right) \frac{n!}{\theta_{(n)}}\sum\limits_{j=1}^n \left(\frac{\theta}{j}\right)^{c_j} \frac{1}{c_j!}.
\end{split}
\end{equation*}
It is tempting to think that the law of $\eta^{n,\theta}$ is obtained from the $ESF_n(\theta)$ conditioned on $C_1^{\tilde \xi}(n)=0$, but this is not true.  Letting $\lambda_1(\theta) = 0$ and  
\begin{equation*}\label{thetahats}
 \lambda_n(\theta) := \bbbp_\theta(C_1^{\tilde{\xi}}(n) = 0) = \frac{n!}{\Gamma(n+\theta)} \sum_{j=0}^n (-1)^j \frac{\theta^j}{j!}\, \frac{\Gamma(n+\theta-j)}{(n-j)!}, n = 2, 3, \ldots
\end{equation*}
the authors showed in~\cite{sjt2021} that for a Markov chain $\tilde{\eta}=\tilde{\eta}^{n,\theta}=(\tilde{\eta}^{(n)}_{n+1}=1,\cdots,\tilde{\eta}^{(n)}_1)$ with the transition probabilities
$$
P_r^{\tilde{\eta}}=\left(
\begin{array}{c@{~~~}c}
\bbbp({\tilde\eta}^{(n)}_r = 0 \mid {\tilde\eta}^{(n)}_{r+1} = 0) & \bbbp({\tilde\eta}^{(n)}_r = 1 \mid {\tilde\eta}^{(n)}_{r+1} = 0) \\[4pt]
\bbbp({\tilde\eta}^{(n)}_r = 0 \mid {\tilde\eta}^{(n)}_{r+1} = 1) & \bbbp({\tilde\eta}^{(n)}_r = 1 \mid {\tilde\eta}^{(n)}_{r+1} = 1)
\end{array}
\right)
$$
defined by
\begin{equation*}\label{prdef}
P_r^{\tilde{\eta}}=\left(
\begin{array}{c@{~~~}c}
\displaystyle\frac{(\theta+r-1)\lambda_r(\theta)}{(\theta+r-1)\lambda_r(\theta) + \theta \lambda_{r-1}(\theta)} &  \displaystyle\frac{\theta\lambda_{r-1}(\theta)}{(\theta+r-1)\lambda_r(\theta) + \theta \lambda_{r-1}(\theta)} \\
&\\
1 & 0
\end{array}
\right),
\end{equation*}
for $2 < r < n$, and
$$
{P}^{\tilde{\eta}}_{n} = {P}^{\tilde{\eta}}_{2} = \left(
\begin{array}{c@{~~~}c}
1 & 0\\
1 & 0
\end{array}
\right),
\quad
{P}^{\tilde{\eta}}_{1} = \left(
\begin{array}{c@{~~~}c}
0 & 1\\
0 & 1
\end{array}
\right),
$$
we have the conditional relation 
\begin{equation*}\label{derlaw}
\mathcal{L}(C^{\tilde{\eta}}_2(n),\ldots,C^{\tilde{\eta}}_n(n)) = \mathcal{L}(C_2^{\tilde{\xi}}(n),\ldots,C_n^{\tilde{\xi}}(n) \mid C_1^{\tilde{\xi}}(n) = 0),
\end{equation*}
where $C^{\tilde{\eta}}_j(n)$ is the number of $j$-spacings between consecutive $1$s in $\tilde{\eta}$. 

A similar question can be asked about $\eta$ and more generally $X^{n,p}$: Is there a sequence of independent random variables $Y=(Y_i)_{i=1}^\infty$ for which $\p_\theta(Y=1)=1-\p_\theta(Y=0)=\theta/(i-1+\theta)$ for $i\in \N$, and for any $n\in \N$, the laws of $X^{n,\mathbf p}$ and $Y$ satisfy
\[
\mathcal{L}(X_n^{n,\mathbf{p}},\cdots,X_1^{n,\mathbf{p}})=\mathcal{L}(Y_n,\cdots,Y_1\mid C_1^Y(n)=0),
\] 
where $C_j^Y(n)$ is the number of $j$-spacings between $1$s (i.e. $1\ 0^{j-1}\ 1$ patterns) in the sequence $1, Y_n,\cdots,Y_1$? Theorem \ref{irvgeneral} shows that this is not necessarily possible for a fixed $\theta>0$. Instead, it shows that for a vector $\bm{\theta}:=(\theta_i)_{i\geq 1}$ with $\theta_i>0$ for $i\in \N$, defining $Y^{\bm\theta}=(Y_i^{\bm\theta})_{i=1}^\infty$ with $$\p_{\bm\theta}(Y_i=1)=1-\p_{\bm\theta}(Y_i=0)=\frac{\theta_i}{i-1+\theta_i}$$ and letting $C_j^{Y,\bm\theta}(n)$ count the number of $j$-spacings in $1, Y_n^{\bm\theta},\cdots, Y_1^{\bm\theta}$, we have
\begin{equation}\label{condlaw}
\mathcal{L}(X_n^{n,\mathbf{p}},\cdots,X_1^{n,\mathbf{p}})=\mathcal{L}(Y_n^{\bm\theta},\cdots,Y_1^{\bm\theta}\mid C_1^{Y,\bm\theta}(n)=0), 
\end{equation}
if and only if $\theta_i=(i-1) q_i/(p_ip_{i-1})$
for $i=3,...,n-1$. We call $Y^{\bm\theta}$ the generalized Feller coupling with vector parameter $\bm\theta$. The corresponding Feller coupling for $\eta^{n,\theta}$ is denoted by the sequence of independent $\{0,1\}$-valued random variables $\xi^\theta=(\xi_i^\theta)_{i=1}^\infty$ with $$\p_\theta(\xi_i^\theta=1)=1-\p_\theta(\xi_i^\theta=0)=\theta_i^*/(i-1+\theta_i^*)$$ where $\theta_i^*=\theta(1+\theta/(i-2))$, for $i\geq 4$, and $\theta^*_3=\theta$; cf. Corollary \ref{PGC}. Some basic properties of $Y^{\bm\theta}$ (e.g. its cycle count distribution) along with some of its applications for $X^{n,\textbf{p}}$ via the conditional relation are given in Sections~\ref{sec:conditional}, \ref{sec:cyclesRevisited} and \ref{sec:cycleCounts}. 

Recall that, to avoid $1$-cycles, we always assume $X^{n,\mathbf{p}}_n=0$, for any $n\in\N$, and this restriction is the reason that $\mathcal{L}(X^{n,\mathbf{p}})$ and $\mathcal{L}(X^{m,\mathbf{p}})$ cannot couple, for $n>m\geq 2$. In other words, $\mathcal{L}(X^{m,\mathbf{p}})$ cannot be obtained by projecting $\mathcal{L}(X^{n,\mathbf{p}})$ on the first $m$ components. Assuming $\sum_{j=1}^\infty p_j=\infty$, however ensures $\varphi_m(\mathbf{p}):=\lim_{n\to \infty}\p(X_m^n=1)$ exists and $0<\varphi_m(\mathbf{p})<1$ (Lemma~\ref{lemma-**}). Under this condition, we find an infinite Markov chain  $X^{\infty,\mathbf{p}}$ arising as the weak limit of $X^{n,\mathbf{p}}$, as $n\to\infty$. In particular, Theorem~\ref{TV-inf-ch} and Corollary~\ref{TV-converge} estimate the total variation distance between $X^{n,\mathbf{p}}$ and $X^{\infty,\mathbf{p}}$, and provides certain conditions under which  this distance converges to zero. In Section~\ref{sec:condRelationXinfinite}, we investigate conditions under which the conditional relation holds between  $X^{\infty,\mathbf{p}}$ and $Y^{\bm\theta}$,  namely $\mathcal{L}(X^{\infty,\mathbf{p}})=\mathcal{L}(Y^{\bm\theta}\mid \sum_{j=1}^\infty Y_j^{\bm\theta}Y_{j+1}^{\bm\theta}=0)$, where the condition on the right means there is no $11$ pattern (or equivalently, $1$-cycle) in the infinite chain $Y^{\bm\theta}$.

As discussed above, the conditional relation (\ref{condlaw}) holds for $\theta_i=(i-1)q_i/(p_ip_{i-1})$, not for $q_i=\theta_i/(i-1+\theta_i)$. For the latter, rather than a conditional relation, we have a push-forward relation under a specific $11$-erasing map. More precisely, let $\chi_n$ be a map that erases $11$ patterns, from left to right, from any given $\{0,1\}$-valued sequence $1,y_n,y_{n-1},\dots,y_2,y_1=1$. In Section~\ref{sec:coupling2} we will see that for $p_i=(i-1)/(i-1+\theta_i)$, $\mathcal{L}(X^{n,\mathbf{p}})=\chi_n\ast \mathcal{L}(Y_n^{\bm\theta},\cdots,Y_1^{\bm\theta})$, where the r.h.s. represents the push-forward of $\mathcal{L}(Y_n^{\bm\theta},\cdots,Y_1^{\bm\theta})$ under $\chi_n$. We extend this push-forward relation to $X^{\infty,\mathbf{p}}$ and apply it in Section~\ref{sec:clt} to prove a central limit theorem for the total number of cycles in $X^{n,\mathbf{p}}$, see Theorem~\ref{thm:clt} and Corollary~\ref{cor:clt}. Another application of this is given in Section~\ref{sec:orderedCycles}, where we show if $\theta_n\to\theta$ as $n\to\infty$, the normalized cycle-lengths of $X^{n,\mathbf{p}}$, in order of formation, converges weakly to $GEM(\theta)$; see Theorem~\ref{GEMlaw}.

The main purpose of this paper is to build a theory to relate $X^{n,\mathbf{p}}$, $X^{\infty,\mathbf{p}}$, and $Y^{\bm\theta}$ together in a very general set up where $\theta_i>0$ and $p_i\in (0,1)$, for $i\geq 3$. This is done using the conditional and push-forward relations which connects the random permutations and derangements.  These relations are applied to study the asymptotic behavior of the cycle counts of $X^{n,\mathbf{p}}$.  Although, most of the results hold for a broad range of choices for $\theta_i$ and $p_i$, when it comes to computational examples, we focus on calculating different quantities either for $\eta$ and $\xi$ where $\theta_i=\theta^*_i=\theta(1+\theta/(i-2))$, $i\geq 4$, or for $\tilde{\eta}$ and $\tilde{\xi}$ where $\theta_i=\theta$, $i\geq 3$.  It is worth mentioning that in addition to $\xi$ and $\tilde\xi$, another specific example of $Y^{\bm\theta}$, defined in this paper, has been studied by some authors.  More specifically,  \cite{ss2004}, \cite{holst2007} and \cite{holst2008} consider different versions of a sequence of independent Bernoulli random variables $\hat{Y}_1,\hat{Y}_2,\cdots$ for which $\p(\hat{Y}_i=1)=a/(b+i-1)$, for $0<a\leq b$ and $i\in \N$.  For $i\geq 2$, this is in fact equivalent to take $\theta_i=a(i-1)/(i-1+b-a)$ in our definition of $Y^{\bm\theta}$. Of course $\p(Y^{\bm\theta}_1=1)\neq\p( \hat{Y}_1=1)$ for $a<b$, but this is not a significant difference. \cite{hm2022} considers an extended version in which $\p(\hat{Y}_i=1)=a/(b+(i-1)^c)$, for $c>0$, that can be translated to $\theta_i=a(i-1)/(b-a+(i-1)^c)$, for $i\geq 2$. As already mentioned, although these are specific cases of $Y^{\bm\theta}$, we focus on a different set of problems in this paper.

\subsection{When signs should be tracked}\label{sec:sign}
In this section we briefly discuss the connections between the signed and unsigned models for more complicated problems in which signs are involved. We see how certain quantities of interest for the signed process can be easily obtained from the corresponding quantities for the unsigned model. To establish this, let $\mathscr X= (\mathscr X_1,\cdots,\mathscr X_n)$ be a general sequence of $n$, possibly correlated,  $\{\pm 0,1\}$-valued random variables, and for any $k=1,...,n$,  let $\mathscr C_k$ count the number of $k$-circles ($k$-spacings) in $\mathscr X$. Also, for $i=1,...,k$,  let $\mathscr{C}_{ki}(n)$ be the number of $k$-circles with exactly $i$ children looking in,  i.e. the number of those $k$ spacings with exactly $i-1$, $+0$ and $k-i$, $-0$ between the $1$s.  Suppose the random vectors $(\mathscr{C}_{1i}(n))_{i=1}^1,(\mathscr{C}_{2i}(n))_{i=1}^2,\cdots, (\mathscr{C}_{ni}(n))_{i=1}^n$ are conditionally independent given $\mathscr{C}_1,\cdots,\mathscr{C}_n$, and suppose for each $k$, $\mathscr{C}_{k1}(n),\dots, \mathscr{C}_{kk}(n)$ conditioned on $\mathscr{C}_k$, is multinomially distributed with weights $\omega_{ki}$, where $\sum_{i=1}^k \omega_{ki}=1$, that is the probability that there are exactly $i$ children looking in, in any given $k$-circle is $\omega_{ki}$.  For instance,  assuming the first child in each circle always looks in while any other child looks in or out with probability $\kappa$ and $1-\kappa$ resepctively, we get 
\begin{equation*}\label{binomial}
\omega_{ki}= {{k-1}\choose{i-1}} \kappa^{i-1}(1-\kappa)^{k-i}.
\end{equation*} 
Under the above assumptions, we can write
\begin{multline*}
\p(\mathscr{C}_{ki}=c_{ki}, \ k=1,...,n, \ i=1,...,k | \mathscr{C}_1=c_1,...,\mathscr{C}_n=c_n)=\\
\mathbbm{1}\{\sum\limits_{k=1}^n kc_k=n\} \prod\limits_{k=1}^n {{c_k}\choose{c_{k1},...,c_{kk}}} \prod\limits_{i=1}^k \left( \omega_{ki}\right)^{c_{ki}},
\end{multline*}
where $c_{ki}\in \Z_+$ and $c_k=\sum_{i=1}^k c_{ki}$ .
Note that there is at least one positive number in each circle, as the first child in each circle always looks in.  From the definition, 
$
\mathscr{C}_k=\sum_{i=1}^k \mathscr{C}_{ki}.
$
Then
\begin{multline*}
\p(\mathscr{C}_{ki}=c_{ki}, \ k=1,...,n, \ i=1,...,k) =\p( \mathscr{C}_1=c_1,...,\mathscr{C}_n=c_n) \times\\
\mathbbm{1}\left\lbrace\sum\limits_{k=1}^n kc_k=n\right\rbrace \prod\limits_{k=1}^n {{c_k}\choose{c_{k1},...,c_{kk}}} \prod\limits_{i=1}^k \left( \omega_{ki}\right)^{c_{ki}}.
\end{multline*}
In the special case that $\mathscr X$ is either $(Y_1^{\bm\theta},\cdots, Y_n^{\bm\theta})$ or $X^{n,\mathbf p}$,  the joint distribution of $\mathscr{C}_k$ is given by (\ref{jointDistY}) or (\ref{jointDistX}), respectively.  We can also find the distribution of $\mathscr{C}_{ki}$ once we know the distribution of $\mathscr{C}_{k}$. In fact,  
we can write
\begin{equation*}
\p(\mathscr{C}_{ki}=\ell)
=\mathbbm{1}\{k\ell\leq n\}\sum\limits_{m=\ell}^{\lfloor\frac{n}{k}\rfloor} {{m}\choose{\ell}}\omega_{ki}^\ell(1-\omega_{ki})^{m-\ell}\p(\mathscr{C}_k=m).
\end{equation*}

As another example, let $\mathscr{C}^*_j(n)$ be the number of circles in which exactly $j$ children look in.  Since $\mathscr{C}^*_j(n)=\sum_{k=j}^n\mathscr{C}_{kj}(n)$, we  have
\begin{eqnarray*}
\e \mathscr{C}^*_j(n) &=&\sum_{k\geq j}\sum_{r=0}^{\lfloor\frac{n}{k}\rfloor}\p(\mathscr{C}_k=r)\e(\mathscr{C}_{kj}\mid \mathscr{C}_k=r)\\
&=&\sum_{k\geq j} \omega_{kj}\sum_{r=0}^{\lfloor\frac{n}{k}\rfloor}r \p(\mathscr{C}_k=r)
= \sum_{k\geq j} \omega_{kj}\e(\mathscr{C}_k)
\end{eqnarray*}
and $\e(\mathscr{C}_k)$  is given in (\ref{meanX}) for $X^{n,\mathbf{p}}$ process and in (\ref{Eq-cor:ECK}) for $\eta$.

In addition, as $\mathscr{C}_{ki}$ and $\mathscr{C}_{k'j}$ are conditionally independent, given $\mathscr{C}_k$ and $\mathscr{C}_{k'}$, we have $\e({\rm Cov}(\mathscr{C}_{ki},\mathscr{C}_{k'j}\mid \mathscr{C}_1,\cdots,\mathscr{C}_n))=-\omega_{ki}\omega_{kj}\e(\mathscr{C}_k)\mathbbm{1}\{k=k'\}$. Hence
\begin{equation*}
\begin{split}
{\rm Cov}(\mathscr{C}^*_{i},\mathscr{C}^*_{j})
=& \sum_{k\geq i, k'\geq j}  \left\lbrace\e({\rm Cov}(\mathscr{C}_{ki},\mathscr{C}_{k'j}\mid \mathscr{C}_k,\mathscr{C}_{k'}))\right.\\
&~~~~~~~~~~~\left.+{\rm Cov}(\e(\mathscr{C}_{ki}\mid \mathscr{C}_k),\e(\mathscr{C}_{k'j}\mid \mathscr{C}_{k'}))\right\rbrace\\
=& \sum_{k\geq i, k'\geq j}  \omega_{ki}\omega_{k'j} {\rm Cov}(\mathscr{C}_k, \mathscr{C}_{k'})-\sum_{k\geq i \vee j}\omega_{ki}\omega_{kj}\e(\mathscr{C}_k).\\
\end{split}
\end{equation*}
Furthermore,  let $\mathscr{A}_i$ and $\mathscr{A}^*_i$ be the size of the $i$-th circle and the number of children looking in, in the  $i$-th circle.  Denote by $\mathscr{K}_n$ the total number of circles.  Then for $\sum_{i=1}^k a_i<n$,
\begin{eqnarray*}
\lefteqn{\p(\mathscr{A}^*_1=a_1,\dots, \mathscr{A}^*_k=a_k,\mathscr{K}_n>k)}\\
&&=\sum_{\substack{a_i\leq r_i \\ r_1+\dots+r_k<n}} \p(\mathscr{A}_1=r_1,\dots, \mathscr{A}_k=r_k,\mathscr{K}_n>k)\prod_{l=1}^k\p(\mathscr{A}^*_l=a_l\mid \mathscr A_l=r_l)\\
&&=\sum_{\substack{a_i\leq r_i \\ r_1+\dots+r_k<n}} \p(\mathscr{A}_1=r_1,\dots, \mathscr{A}_k=r_k,\mathscr{K}_n>k)\prod_{l=1}^k\omega_{r_la_l}
\end{eqnarray*}
where $ \p(\mathscr{A}_1=r_1,\dots, \mathscr{A}_k=r_k,\mathscr{K}_n>k)$ is given in (\ref{ordercyclesY}) for the $Y^{\bm\theta}$ process. 

Now, let $\Lambda_n$ be the total number of children looking in.  In the case that the probability of a child looking in is $\kappa$,  we have
\begin{equation*}
\p(\Lambda_n=r)
=\sum_{k=1}^r {{n-k}\choose{r-k}}\kappa^{r-k}(1-\kappa)^{n-r}\p(\mathscr{K}_n=k),
\end{equation*}
and $\p(\mathscr{K}_n=k)$ is given in (\ref{probKY}) and (\ref{probKX}) for $Y^{\bm\theta}$ and  $X^{n,\mathbf{p}}$ processes, respectively.  Also,
\begin{eqnarray*}
\e[\Lambda_n]&=&\e[\mathscr{K}_n]+\sum\limits_{i=1}^n\p(\mathscr{X}_i=+0)
=\e[\mathscr{K}_n]+n\kappa-\kappa\sum\limits_{i=1}^n \p(\mathscr{X}_i=1)\\
&=&n\kappa+(1-\kappa)\e[\mathscr{K}_n].
\end{eqnarray*}
Therefore,  knowing $\e[\mathscr{K}_n]$, we can obtain $\e[\Lambda_n]$.  For $X^{n,\mathbf{p}}$,  $\e[\mathscr{K}_n]$ is given in (\ref{meanKX}), and for $\eta$ in (\ref{EKeta}).

\subsection{Outline}\label{sec:outline}
The remainder of the paper is organized as follows. Section~\ref{sec:mc} explores some basic properties of  the Markov chain $X^{n,\textbf{p}}$ directly deduced from the transition probabilities. This includes the average number of cycles, the average number of $j$-cycles and their asymptotics.  Section~\ref{sec:coupling} provides necessary and sufficient conditions for the conditional law (\ref{condlaw}). The probability generating function of the total number of cycles and the joint distribution of the cycle counts are obtained using the conditional relation.  The last part of Section~\ref{sec:coupling} is devoted to finding the weak limit of $X^{n,\mathbf{p}}$ and its conditional relation with $Y^{\bm \theta}$, under certain conditions. Finally, Section~\ref{sec:coupling2} uses a specific coupling between $X^{n,\mathbf{p}}$ and $Y^{\bm\theta}$, for $p_i=(i-1)/(\theta_i+i-1)$, to derive a central limit theorem for the total number of cycles, and also to deduce the asymptotic behavior of the joint distribution of the normalized cycle lengths of the $X^{n,\textbf{p}}$ process, in order of their formation.

\section{Random derangements via Markov chains}\label{sec:mc}
In the following sections we study some properties of the Markov chain $X^{n,\mathbf{p}}=(1,  X_n,  \cdots, X_2, 1)$ derived directly from its transition matrices. We also explore the particular case of the playground game process $\eta$ via its corresponding chain.  As in the Feller coupling process, we denote by $C_j(n)$ the number of spacings of length $j$ between the $1$s in $X^{n,\mathbf{p}}$, that is equivalent to the number of cycles of size $j$. Also, we denote by $K_n$ the total number of cycles of $X^{n,\mathbf{p}}$.  We first compute the marginal distribution of $X_i$, and then use it to study the expected value of $K_n$ and its asymptotics. 

\subsection{Marginal distributions and transition probabilities}\label{sec:marginalX}
For $j\geq i\geq 1$ and $u,v\in \{0,1\}$, denote
$
\mathcal{Q}_{j,i}(u,v)=\p(X_i^{j-u,\mathbf{p}}=v).
$
From the Markov property, for any $n>j\geq i\geq 1$, and any $u,v\in \{0,1\}$ we have
\begin{equation}\label{X_Markov_relation}
\p(X_i^{n,\mathbf{p}}=v\mid X_j^{n,\mathbf{p}}=u)=\mathcal{Q}_{j,i}(u,v),
\end{equation}
and as $\p(X_{n+1}^{n,\mathbf{p}}=0)=\p(X_{n}^{n,\mathbf{p}}=1)=0$, we assume by convention that
(\ref{X_Markov_relation}) holds for any $n+1\geq j\geq i\geq 1$. On the other hand,  it is clear from the definition of $X^{n,\mathbf{p}}$ that 
$
\mathcal{Q}_{ji}(0,1)=\mathcal{Q}_{j+1,i}(1,1)
$, for $j\geq i$. 
Hence, to determine the values of $\mathcal{Q}_{ji}(u,v)$ for any $j>i$ and any $u,v\in \{0,1\}$, we need to find the values of $\mathcal{Q}_{j+1,i}(1,1)=\p(X_i^{j,\mathbf{p}}=1)$, for any $j\geq i$. 
The following lemma may be proved by induction on $i$ and using the fact that 
\[
\p(X_{i}^{n,\mathbf{p}}=1) = \sum\limits_{u\in \{0,1\}} \p(X_{i+1}^{n,\mathbf{p}}=u)\mathcal{Q}_{i+1,i}(u,1)
= q_{i}(1-\p(X_{i+1}^{n,\mathbf{p}}=1)).
\]
\begin{lemma}\label{ProbEta}
For $3\leq i< n$,
\begin{equation*}
\p(X_i^{n,\mathbf{p}}=1)=\sum\limits_{j=0}^{n-i-1}(-1)^j \prod\limits_{l=i}^{j+i}q_l.
\end{equation*}
\end{lemma}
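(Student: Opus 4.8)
The plan is to fix $n$ and prove the identity by \emph{downward} induction on $i$, starting at $i=n-1$ and decreasing to $i=3$, driven by the one-step recurrence quoted immediately before the lemma,
\[
\p(X_i^{n,\mathbf{p}}=1) = q_i\bigl(1 - \p(X_{i+1}^{n,\mathbf{p}}=1)\bigr).
\]
This recurrence is valid for $2\le i\le n-1$, since for those indices the transition matrix $P_i^{n,\mathbf{p}}$ has the generic form $\left(\begin{smallmatrix} p_i & q_i \\ 1 & 0 \end{smallmatrix}\right)$, giving $\mathcal{Q}_{i+1,i}(0,1)=q_i$ and $\mathcal{Q}_{i+1,i}(1,1)=0$, whence the stated identity $\p(X_i^{n,\mathbf{p}}=1)=\sum_{u}\p(X_{i+1}^{n,\mathbf{p}}=u)\mathcal{Q}_{i+1,i}(u,1)=q_i(1-\p(X_{i+1}^{n,\mathbf{p}}=1))$.

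For the base case $i=n-1$ I would invoke the boundary condition $\p(X_n^{n,\mathbf{p}}=1)=0$ (forced by $P_n^{n,\mathbf{p}}$ together with the convention $X_{n+1}=1$). The recurrence then yields $\p(X_{n-1}^{n,\mathbf{p}}=1)=q_{n-1}(1-0)=q_{n-1}$, which is exactly the right-hand side of the claimed formula at $i=n-1$, where the sum $\sum_{j=0}^{n-i-1}$ collapses to the single term $j=0$, namely $\prod_{l=n-1}^{n-1}q_l=q_{n-1}$.

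For the inductive step I would assume the formula at $i+1$, i.e. $\p(X_{i+1}^{n,\mathbf{p}}=1)=\sum_{j=0}^{n-i-2}(-1)^j\prod_{l=i+1}^{j+i+1}q_l$, substitute into the recurrence, and distribute $q_i$. The constant $1$ contributes an isolated $q_i=(-1)^0\prod_{l=i}^{i}q_l$, while absorbing $q_i$ into each product uses $q_i\prod_{l=i+1}^{j+i+1}q_l=\prod_{l=i}^{j+i+1}q_l$. Re-indexing the alternating sum by $j\mapsto j-1$ (so its range becomes $1\le j\le n-i-1$, with $(-1)^{j-1}=-(-1)^j$ flipping the sign) converts $-q_i\sum_{j=0}^{n-i-2}(\cdots)$ into $\sum_{j=1}^{n-i-1}(-1)^j\prod_{l=i}^{j+i}q_l$; combining this with the isolated $j=0$ term reassembles precisely $\sum_{j=0}^{n-i-1}(-1)^j\prod_{l=i}^{j+i}q_l$, as desired.

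The only genuine obstacle is the sign-and-index bookkeeping in the shift $j\mapsto j-1$: one must track that the minus sign coming from $1-\p(X_{i+1}^{n,\mathbf{p}}=1)$ cancels with the sign flip $(-1)^{j-1}=-(-1)^j$, and that the upper limit moves correctly from $n-i-2$ to $n-i-1$ so the isolated $q_i$ supplies exactly the missing $j=0$ term. A secondary point to verify is that the induction is anchored on the \emph{generic} recurrence at $i=n-1$ rather than on the degenerate matrices $P_n^{n,\mathbf{p}}$ or $P_1^{n,\mathbf{p}}$, so the chain of implications from $i=n-1$ down to $i=3$ uses only well-formed transition steps.
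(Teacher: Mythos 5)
Your proposal is correct and is exactly the argument the paper intends: the paper's proof consists of the same recurrence $\p(X_i^{n,\mathbf{p}}=1)=q_i\bigl(1-\p(X_{i+1}^{n,\mathbf{p}}=1)\bigr)$ together with the remark that one inducts on $i$, and your downward induction anchored at $\p(X_n^{n,\mathbf{p}}=1)=0$ (equivalently $i=n-1$) with the sign-flip reindexing $j\mapsto j-1$ is precisely the omitted bookkeeping. No gaps; the step verifying that the generic transition matrix applies for $2\le i\le n-1$ is a worthwhile detail the paper leaves implicit.
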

Summarizing the above discussion,  for $j>i>2$ and $u,v\in \{0,1\}$, we get
\[
\begin{split}
\mathcal{Q}_{ji}(u,v)=& \p(X_i^{j-u,\mathbf{p}}=v)\\
= & v \left( \sum\limits_{r=0}^{j-u-1-i} (-1)^r \prod_{s=i}^{i+r} q_s\right)+(1-v)\left(1-\sum\limits_{r=0}^{j-u-1-i} (-1)^r \prod_{s=i}^{i+r} q_s\right)\\
= & (1-v)+(2v-1) \sum\limits_{r=0}^{j-u-1-i} (-1)^r \prod_{s=i}^{i+r} q_s,
\end{split}
\]
where, by convention, we assume $\sum_{r=0}^{-1}  a_r=0$. 
In particular,  for $i_0=n+1>i_1>\cdots>i_k>2$, we have
\[
\e_{n,\mathbf{p}}[ X_{i_1}\ldots X_{i_k}]=\prod_{l=0}^{k-1} \sum\limits_{r=0}^{i_l-i_{l+1}-2} (-1)^r \prod_{s=i_{l+1}}^{r+i_{l+1}} q_s.
\]

For $a,b,z\in \mathbb{C}$, denote by
\[
M(a,b,z)=\sum\limits_{j=0}^\infty \frac{a_{(j)} z^j}{b_{(j)} j!},
\]
the confluent hypergeometric function. For $Re \ b> Re \ a>0$, the integral representation of $M$ is given by
\begin{equation}\label{hyp-int}
M(a,b,z)=\frac{\Gamma(b)}{\Gamma(a)\Gamma(b-a)}\int_0^1 e^{zu} u^{a-1}(1-u)^{b-a-1} \ du.
\end{equation}
We have the following lemma.
\begin{lemma}\label{marginal-eta} For any $3\leq i< n$,
\[
\p_\theta(\eta_i=1)=\sum\limits_{j=1}^{n-i} (-1)^{j+1} \frac{\theta^j}{(\theta+i-1)_{(j)}}.
\]
Furthermore,  for any $i\geq 3$
\begin{equation*}
\lim\limits_{n\rightarrow\infty} \p_\theta(\eta_i=1)=\frac{\theta}{\theta+i-1} M(1,\theta+i,-\theta)=\theta\int_0^1e^{-\theta u} (1-u)^{\theta+i-2} du.
\end{equation*}
\end{lemma}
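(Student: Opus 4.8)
The plan is to derive both statements by direct substitution of the playground-game weights into Lemma~\ref{ProbEta} and then identifying the resulting series with the confluent hypergeometric function $M$. Since $\eta=\eta^{n,\theta}$ is the special case of $X^{n,\mathbf{p}}$ with $p_r=(r-1)/(r-1+\theta)$, we have $q_r=\theta/(r-1+\theta)$. First I would substitute this into the product in Lemma~\ref{ProbEta}: for the block $l=i,\dots,j+i$, which has $j+1$ factors,
\[
\prod_{l=i}^{j+i} q_l=\prod_{l=i}^{j+i}\frac{\theta}{l-1+\theta}=\frac{\theta^{\,j+1}}{(\theta+i-1)_{(j+1)}},
\]
because the denominator runs over the $j+1$ consecutive values $\theta+i-1,\theta+i,\dots,\theta+i-1+j$, which is precisely the rising factorial $(\theta+i-1)_{(j+1)}$. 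Plugging this into Lemma~\ref{ProbEta} and reindexing by $k=j+1$ (so $k$ runs from $1$ to $n-i$ and $(-1)^{j}=(-1)^{k+1}$) immediately yields the claimed finite-sum formula; this step is purely Pochhammer bookkeeping.

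For the limit, I would first note that the terms $\theta^{k}/(\theta+i-1)_{(k)}$ decay faster than geometrically, since the rising factorial grows factorially in $k$. Hence $\sum_{k\geq 1}(-1)^{k+1}\theta^{k}/(\theta+i-1)_{(k)}$ converges absolutely and $\lim_{n\to\infty}\p_\theta(\eta_i=1)$ equals this infinite sum. Next I would identify it with $M$: taking $a=1$, $b=\theta+i$ in the definition of $M$ and using $1_{(j)}=j!$, so that $1_{(j)}/j!=1$, gives $M(1,\theta+i,-\theta)=\sum_{j\geq0}(-\theta)^{j}/(\theta+i)_{(j)}$. Multiplying by $\theta/(\theta+i-1)$ and applying the identity $(\theta+i-1)(\theta+i)_{(j)}=(\theta+i-1)_{(j+1)}$ turns this into $\sum_{j\geq0}(-1)^{j}\theta^{\,j+1}/(\theta+i-1)_{(j+1)}$, which is exactly the infinite sum above after reindexing. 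This establishes the first equality in the limit.

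Finally I would obtain the integral form by applying the representation (\ref{hyp-int}) with $a=1$, $b=\theta+i$, $z=-\theta$; the hypothesis $Re\, b>Re\, a>0$ holds since $\theta>0$ and $i\geq 3$. Because $\Gamma(1)=1$, $u^{a-1}=1$, and $\Gamma(\theta+i)/\Gamma(\theta+i-1)=\theta+i-1$, the prefactor simplifies to $\theta+i-1$, so that $M(1,\theta+i,-\theta)=(\theta+i-1)\int_0^1 e^{-\theta u}(1-u)^{\theta+i-2}\,du$; multiplying by $\theta/(\theta+i-1)$ cancels the factor and produces the stated integral. There is no genuine obstacle here: the substance is the index arithmetic in the first substitution and the recognition of $M(1,\theta+i,-\theta)$, and the only point needing a word of justification is the interchange of limit and summation, which is handled by the absolute-convergence remark above.
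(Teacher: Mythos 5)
Your proof is correct and follows the same overall strategy as the paper: specialize Lemma~\ref{ProbEta} to $q_l=\theta/(\theta+l-1)$ to get the finite sum, pass to the infinite series, and identify it with a confluent hypergeometric function via the integral representation (\ref{hyp-int}). The one place you genuinely diverge is the middle step. The paper recognizes the limiting series as $1-M(1,\theta+i-1,-\theta)$ (parameter $\theta+i-1$) and then must integrate by parts in the representation (\ref{hyp-int}) to reach $\theta\int_0^1 e^{-\theta u}(1-u)^{\theta+i-2}\,du$ and the form $\frac{\theta}{\theta+i-1}M(1,\theta+i,-\theta)$. You instead shift the Pochhammer index first, using $(\theta+i-1)(\theta+i)_{(j)}=(\theta+i-1)_{(j+1)}$ to write the series directly as $\frac{\theta}{\theta+i-1}M(1,\theta+i,-\theta)$, after which a single application of (\ref{hyp-int}) with $a=1$, $b=\theta+i$ yields the integral with no integration by parts. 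In effect you perform the paper's integration by parts at the level of series coefficients rather than integrals; this is marginally cleaner, as it establishes both claimed equalities in one pass. Your explicit absolute-convergence justification for exchanging the limit with the summation (the ratio of consecutive terms tends to $0$) is also sound, and is a point the paper leaves implicit.
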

\begin{proof}
The first part is clear from Lemma \ref{ProbEta} by letting $q_l=\theta / (\theta+l-1)$.  For the second part, write
\[
\begin{split}
\lim\limits_{n\rightarrow \infty}\p(\eta_i =1)=-\sum\limits_{j=1}^\infty \frac{(-\theta)^j}{(\theta+i-1)_{(j)}} & =1-\sum\limits_{j=0}^\infty \frac{1_{(j)}(-\theta)^j}{(\theta+i-1)_{(j)}j!}\\
& = 1- M(1,\theta+i-1,-\theta). 
\end{split}
\]
Using (\ref{hyp-int}) and integrating by parts, the second part of the lemma follows.
\end{proof}

For $i_0=n+1>i_1>\cdots>i_k>2$,
\[
\e_\theta [\eta_{i_1}\ldots \eta_{i_k}]=\prod_{r=1}^k \sum\limits_{l=0}^{i_{r-1}-i_r-2} (-1)^l \frac{\theta^{l+1}}{(\theta+i_{r}-1)_{(l+1)}}.
\]

As a result, for $2<i<j<n-1$,
\begin{equation*}
\begin{split}
{\rm Cov}(\eta_j, \eta_i)=&  \frac{(-1)^{i+j+1} \Gamma(\theta+i-1)\Gamma(\theta+j-1)}{\theta^{i+j-2}}\\
& \times \left( \sum\limits_{l=j}^{n-1} (-1)^l \frac{\theta^l}{\Gamma(\theta+l)}\right) \left( \sum\limits_{l=j-1}^{n-1} (-1)^l \frac{\theta^l}{\Gamma(\theta+l)}\right).
\end{split}
\end{equation*}
\subsection{Number of cycles}
Let $H_n:=\sum_{i=1}^n 1/i$ and let $\gamma:= \lim\limits_{n \rightarrow \infty} H_n- \log n \simeq0.5772$  denote the Euler constant. As in the Feller Coupling case,  the following lemma shows  the linear relationship of the expected value of $K_n$ and $\log n$, under specific conditions.
 \begin{lemma}\label{kn}
For $n>3$,
\begin{equation}\label{meanKX}
\e_{\mathbf{p}}(K_n)=\sum\limits_{i=1}^{n-1}\sum\limits_{j=0}^{n-i-1}(-1)^j \prod\limits_{l=i}^{j+i}q_l.
\end{equation}

If, in addition, there exists a constant $\alpha\geq 0$ such that $n q_n\to \alpha$, as $n\to\infty$, and $|\psi_\alpha(\mathbf{p})|<\infty$ for
\begin{equation}\label{psifunction}
\psi_\alpha(\mathbf{p})=\psi(\mathbf{p}):= \sum_{i=1}^\infty \left(q_i-\frac{\alpha}{i}\right),
\end{equation}
then

 \begin{equation}\label{ineq}
\lim_{n\rightarrow \infty} \e_{\mathbf{p}}(K_n) - \alpha\log n =\alpha\gamma+\psi(\mathbf{p})+\sum\limits_{j=1}^\infty (-1)^j \bar a_j,
 \end{equation}
where $\bar a_j= \sum\limits_{i=1}^\infty \prod\limits_{l=i}^{j+i}q_l<\infty$, for $j\in \N$.
\end{lemma}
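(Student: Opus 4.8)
The plan is to treat the exact identity (\ref{meanKX}) and the asymptotic (\ref{ineq}) separately. For (\ref{meanKX}) I would write $K_n=\sum_{i=1}^n X_i^{n,\mathbf p}$, the number of $1$s among $X_n,\dots,X_1$, and use linearity of expectation to get $\e_{\mathbf p}(K_n)=\sum_{i=1}^n\p(X_i^{n,\mathbf p}=1)$. The last transition matrix $P_n^{n,\mathbf p}$ forces $X_n=0$, so $\p(X_n^{n,\mathbf p}=1)=0$ and the sum effectively runs up to $i=n-1$. For $3\le i\le n-1$, Lemma~\ref{ProbEta} supplies the inner sum $\sum_{j=0}^{n-i-1}(-1)^j\prod_{l=i}^{j+i}q_l$ verbatim, so the only genuine check is that the \emph{same} expression returns the correct boundary values at $i=1,2$. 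Using $q_1=1$ and $q_2=0$: the $i=1$ inner sum collapses to $q_1=1=\p(X_1^{n,\mathbf p}=1)$ since every term with $j\ge1$ carries the factor $q_2=0$, while the $i=2$ inner sum is identically $0=\p(X_2^{n,\mathbf p}=1)$. Hence the double sum over $i=1,\dots,n-1$ reproduces $\e_{\mathbf p}(K_n)$.

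For the asymptotic I would reorganize the double sum by the spacing length $j$, writing $\e_{\mathbf p}(K_n)=\sum_{j=0}^{n-2}(-1)^j a_j^{(n)}$ with $a_j^{(n)}:=\sum_{i=1}^{n-1-j}\prod_{l=i}^{j+i}q_l$, so that $a_j^{(n)}\uparrow\bar a_j$ as $n\to\infty$ (the terms are nonnegative). The unique divergent contribution is $j=0$, where $a_0^{(n)}=\sum_{i=1}^{n-1}q_i$. Splitting $q_i=\alpha/i+(q_i-\alpha/i)$ gives $a_0^{(n)}-\alpha\log n=\alpha(H_{n-1}-\log n)+\sum_{i=1}^{n-1}(q_i-\alpha/i)$, and since $H_{n-1}-\log n\to\gamma$ and the second sum converges to $\psi(\mathbf p)$ by the hypothesis $|\psi(\mathbf p)|<\infty$, we obtain $a_0^{(n)}-\alpha\log n\to\alpha\gamma+\psi(\mathbf p)$.

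The remaining terms I would pass to the limit by dominated convergence against the counting measure on $j\ge1$: one has $|(-1)^j a_j^{(n)}|\le\bar a_j$ and $a_j^{(n)}\to\bar a_j$ pointwise, so it suffices to prove $\sum_{j\ge1}\bar a_j<\infty$. This is exactly where the hypothesis $nq_n\to\alpha$ is used: it furnishes a uniform bound $q_l\le C/l$. Setting $S_i:=\sum_{m\ge i}\prod_{l=i}^m q_l$, the recursion $S_i=q_i(1+S_{i+1})$ together with a geometric comparison from $q_l\le C/l$ yields $S_i=O(1/i)$, and reindexing by the top of the product gives $\sum_{j\ge1}\bar a_j=\sum_{i\ge1}q_i S_{i+1}=O\!\left(\sum_i 1/i^2\right)<\infty$; in particular each $\bar a_j<\infty$, as asserted. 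Combining the two regimes then gives $\lim_{n\to\infty}(\e_{\mathbf p}(K_n)-\alpha\log n)=\alpha\gamma+\psi(\mathbf p)+\sum_{j\ge1}(-1)^j\bar a_j$.

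The main obstacle is precisely this interchange of limit and infinite alternating summation over $j$; everything else is bookkeeping on boundary terms and the harmonic estimate for $j=0$. The summability $\sum_j\bar a_j<\infty$ — equivalently the $O(1/i)$ decay of $S_i$ extracted from $nq_n\to\alpha$ via $q_l\le C/l$ — is exactly what licenses that interchange, so I would be careful to establish the uniform bound and the recursion for $S_i$ before invoking dominated convergence.
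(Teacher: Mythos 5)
Your proof is correct. For the exact formula (\ref{meanKX}) your argument is the one the paper leaves implicit (the paper just says it follows from Lemma~\ref{ProbEta}); your explicit boundary checks at $i=1,2$ via $q_1=1$, $q_2=0$ make precise why the double sum may be written over all $1\le i\le n-1$. For the limit (\ref{ineq}), you and the paper share the same skeleton --- reorganize by spacing length $j$, isolate the divergent $j=0$ term and treat it through $q_i=\alpha/i+(q_i-\alpha/i)$, $H_{n-1}-\log n\to\gamma$ and $|\psi(\mathbf{p})|<\infty$ --- but the mechanism for interchanging $n\to\infty$ with the sum over $j\ge1$ is genuinely different. The paper exploits the alternating structure: since $a_j(n)$ decreases in $j$, the partial sums are squeezed between $\sum_{j=1}^{2m-1}(-1)^j\bar a_j-\bar a_{2m+1}$ and $\sum_{j=1}^{2m-1}(-1)^j\bar a_j+\bar a_{2m}$ (displays (\ref{lemma3-upperbound}) and (\ref{lemma3-lowerbound})), and the squeeze closes because $\bar a_j\to0$ as $j\to\infty$; finiteness of each individual $\bar a_j$ comes from a limit comparison with $\sum_i 1/i_{(j+1)}$. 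You instead discard the cancellation and prove the strictly stronger fact $\sum_{j\ge1}\bar a_j<\infty$: boundedness of the convergent sequence $(nq_n)$ gives $q_l\le C/l$, whence the geometric bound $S_i\le(C/i)/(1-C/i)=O(1/i)$ for $S_i=\sum_{m\ge i}\prod_{l=i}^m q_l$ (with the recursion $S_i=q_i(1+S_{i+1})$ propagating finiteness down to small $i$), and Tonelli gives $\sum_{j\ge1}\bar a_j=\sum_{i\ge1}q_iS_{i+1}=O\bigl(\sum_i i^{-2}\bigr)<\infty$, so dominated convergence in $j$ licenses the interchange; one cosmetic slip is that your ``reindexing by the top of the product'' is really a grouping by the product's starting index $i$. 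Your route buys absolute convergence of $\sum_{j\ge1}(-1)^j\bar a_j$ and never uses the alternation of signs or the monotonicity of $a_j(n)$ in $j$, so it is more robust; the paper's route gets by with the weaker input $\bar a_j\to0$ and, as a by-product, yields the explicit truncation bounds that the paper reuses for numerical approximation (the remark after Lemma~\ref{kn}, the error estimate following Theorem~\ref{thm:meanX}, and Proposition~\ref{prop:limECeta}), which your dominated-convergence argument does not directly provide.
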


\begin{proof}
The first part is straightforward  from Lemma~\ref{ProbEta}. Now note that for any fixed $j\geq 1$, $\lim\limits_{i\rightarrow \infty}i_{(j+1)}\prod\limits_{l=i}^{j+i}q_l=\alpha^j\geq 0$. Therefore, from the limit comparison test,  as $\sum_{i=1}^\infty 1/i_{(j+1)}$ converges  so does $\bar a_j$.
We have
$$
\sum_{i=1}^{n-2}\sum_{j=1}^{n-i-1}(-1)^{j+1} \prod\limits_{l=i}^{j+i}q_l\leq \sum_{i=1}^{n-2}q_iq_{i+1}\leq \sum_{i=1}^\infty q_iq_{i+1}< \infty.
$$
Also one can easily see that the l.h.s. is a positive increasing function of $n$, hence the limit of the l.h.s., as $n\to\infty$, exists and is finite. To find this limit, note that $a_j(n):=\sum_{i=1}^{n-j-1}\prod_{l=i}^{j+i} q_l$ decreases as $j$ increases, since $q_i<1$ for $i\geq 3$. Hence, by interchanging the sums, for any $m\geq 1$,
\begin{equation}\label{lemma3-upperbound}
 \lim_{n\to \infty}\sum_{i=1}^{n-2}\sum_{j=1}^{n-i-1}(-1)^j\prod_{l=i}^{i+j}q_l= \lim_{n\to \infty}\sum_{j=1}^{n-2}(-1)^ja_j(n)
\leq \sum_{j=1}^{2m-1} (-1)^j \bar a_j+\bar{a}_{2m},  
\end{equation}
and similarly, the l.h.s. of (\ref{lemma3-upperbound}) is greater than or equal to
\begin{equation}\label{lemma3-lowerbound}
    \sum\limits_{j=1}^{2m+1} (-1)^j \bar a_j \geq  \sum\limits_{j=1}^{2m-1} (-1)^j \bar a_j  - \bar{a}_{2m+1}.
\end{equation}
Hence, to see that the l.h.s. of (\ref{lemma3-upperbound}) equals $\sum_{i=1}^\infty (-1)^j \bar{a}_j$, it suffices to show $\bar{a}_{2m}\to 0$, as $m\to\infty$. This is clear from
\[
\sum\limits_{i=1}^\infty \prod_{l=i}^{i+j} q_l\leq \sum\limits_{i=1}^\infty q_iq_{i+1}<\infty,  \ \ j=2,3,\cdots,
\]
and
\[
\lim\limits_{j\to \infty} \bar a_j=\lim\limits_{j\to\infty}\sum\limits_{i=1}^\infty \prod_{l=i}^{i+j} q_l=\sum\limits_{i=1}^\infty\lim\limits_{j\to\infty} \prod_{l=i}^{i+j} q_l=0.
\]
Now, write
\begin{eqnarray*}
\lim_{n\to\infty}\e_{\mathbf{p}}(K_n) - \alpha\log n &=&\lim_{n\to\infty} \left(\sum_{i=1}^{n-1}q_i-\alpha H_n\right)+\alpha\left(\lim_{n\to\infty}H_n-\log n \right)\\
&&-\lim_{n\to\infty}\sum_{i=1}^{n-2}\sum_{j=1}^{n-i-1}(-1)^{j+1}\prod_{l=i}^{i+j}q_l\\
&=& \psi(\mathbf{p})+\alpha\gamma+\sum\limits_{j=1}^\infty (-1)^j \bar a_j.
\end{eqnarray*}
\end{proof}
\begin{remark}
Note that (\ref{lemma3-upperbound}) and (\ref{lemma3-lowerbound}) give upper and lower bounds for the sum on the right of (\ref{ineq}). 
\end{remark}
Denote by $K_n^{\eta}$ the number of cycles of the playground game $\eta^n$. The following result is an application of Lemma~\ref{kn}. Before stating the next theorem, recall that for any $p,q\in \N$, the generalized hypergeometric function $_pF_q$ is defined by
\[
_pF_q(a_1,\cdots,a_p; b_1,\cdots,b_p; z)=\sum\limits_{j=0}^\infty \frac{(a_1)_{(j)}\cdots(a_p)_{(j)}z^j}{(b_1)_{(j)}\cdots(b_q)_{(j)}j!}.
\]
Note that $_1F_1(a;b;z)=M(a,b,z)$ the confluent hypergeometric function. From the Euler's integral transform
\begin{multline}\label{hyp-int-2}
    _{p+1}F_{q+1}(a_1,\cdots,a_p,a; b_1,\cdots,b_p,b; z)=\\
    \frac{\Gamma(b)}{\Gamma(a)\Gamma(b-a)}\int_0^1 x^{a-1}(1-x)^{b-a-1} \ _pF_q(a_1,\cdots,a_p; b_1,\cdots,b_p; zx) \ dx.
\end{multline}
\begin{theorem}\label{thm:meanX}
For $n>3$,
\begin{equation}\label{EKeta}
\e(K_n^{\eta})=1+\sum\limits_{i=3}^{n-1}\frac{\theta}{\theta+i-1}+\sum\limits_{j=2}^{n-3}\sum\limits_{i=3}^{n-j}\frac{(-1)^{j+1}\theta^j}{(\theta+i-1)_{(j)}},
\end{equation}
\begin{equation}\label{ineq-eta}
\lim_{n\rightarrow \infty} \e(K_n^{\eta}) - \theta \log n= 1-\theta H_{\theta+1} +\theta \gamma -\theta^2 \int_0^1\int_0^1 e^{-\theta xy}(1-x)^{\theta+1} dx \ dy,
 \end{equation}
 where \[H_y=\int_0^1 \frac{1-x^y}{1-x}dx.\]
\end{theorem}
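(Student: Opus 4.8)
The plan is to obtain both identities by specializing Lemma~\ref{kn} to the playground-game weights, for which the conventions give $q_1=1$, $q_2=0$, and $q_l=\theta/(\theta+l-1)$ for $l\ge 3$. For the exact formula~(\ref{EKeta}) I would start from~(\ref{meanKX}) and split the inner sum at $j=0$. The $j=0$ slice is $\sum_{i=1}^{n-1}q_i=1+0+\sum_{i=3}^{n-1}\theta/(\theta+i-1)$, which is precisely the first two terms of~(\ref{EKeta}). For $j\ge 1$, every product $\prod_{l=i}^{j+i}q_l$ with $i\in\{1,2\}$ contains the zero factor $q_2$ and so drops out; for $i\ge 3$ each factor equals $\theta/(\theta+l-1)$ and the product collapses to $\theta^{j+1}/(\theta+i-1)_{(j+1)}$. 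Reindexing $j\mapsto j+1$ turns the sign into $(-1)^{j+1}$ and the rising factorial into $(\theta+i-1)_{(j)}$, and interchanging the two finite sums (the joint range $3\le i$, $2\le j$, $i+j\le n$ becomes $2\le j\le n-3$, $3\le i\le n-j$) yields the double sum in~(\ref{EKeta}).

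For the asymptotic~(\ref{ineq-eta}) I would invoke the second half of Lemma~\ref{kn} with $\alpha=\theta$: indeed $nq_n=n\theta/(\theta+n-1)\to\theta$, and $\psi(\mathbf p)=\sum_i(q_i-\theta/i)$ converges since its general term is $O(i^{-2})$. It then remains to evaluate the two constants $\psi(\mathbf p)$ and $\sum_{j\ge 1}(-1)^j\bar a_j$ in~(\ref{ineq}). For the first, writing $\psi(\mathbf p)=(1-\theta)+(0-\tfrac\theta2)+\theta\sum_{i\ge 3}\big(\tfrac{1}{\theta+i-1}-\tfrac1i\big)$, I would split the tail into a telescoping part $\sum_{i\ge3}(\tfrac{1}{i-1}-\tfrac1i)=\tfrac12$ and a part $\sum_{k\ge2}(\tfrac{1}{\theta+k}-\tfrac1k)=1-H_\theta-\tfrac{1}{1+\theta}$, using $\sum_{k\ge1}(\tfrac1k-\tfrac{1}{k+\theta})=H_\theta$ (which matches $H_y=\int_0^1\frac{1-x^y}{1-x}\,dx$). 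The $\theta$-linear terms cancel and the recurrence $H_{\theta+1}=H_\theta+\tfrac{1}{\theta+1}$ gives $\psi(\mathbf p)=1-\theta H_{\theta+1}$.

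The crux is the alternating series. Here $\bar a_j=\sum_{i\ge3}\theta^{j+1}/(\theta+i-1)_{(j+1)}$ (the $i\in\{1,2\}$ terms again vanish), and after interchanging the $i$- and $j$-sums the inner sum over $j$ is, by the series in Lemma~\ref{marginal-eta}, exactly $\lim_n\p_\theta(\eta_i=1)-\theta/(\theta+i-1)$. Representing both pieces as integrals, $\lim_n\p_\theta(\eta_i=1)=\theta\int_0^1 e^{-\theta u}(1-u)^{\theta+i-2}\,du$ (Lemma~\ref{marginal-eta}) and $\theta/(\theta+i-1)=\theta\int_0^1(1-u)^{\theta+i-2}\,du$, their difference is $\theta\int_0^1(e^{-\theta u}-1)(1-u)^{\theta+i-2}\,du$; summing the geometric series $\sum_{i\ge3}(1-u)^{\theta+i-2}=(1-u)^{\theta+1}/u$ under the integral gives $\sum_{j\ge1}(-1)^j\bar a_j=-\theta\int_0^1(1-e^{-\theta u})(1-u)^{\theta+1}u^{-1}\,du$. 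Recognizing $\int_0^1 e^{-\theta x y}\,dy=(1-e^{-\theta x})/(\theta x)$ rewrites this as $-\theta^2\int_0^1\int_0^1 e^{-\theta xy}(1-x)^{\theta+1}\,dx\,dy$, and substituting the two constants into~(\ref{ineq}) produces~(\ref{ineq-eta}). The main obstacle is justifying the interchanges of summation and of sum with integral; both are controlled because, after factoring out $(e^{-\theta u}-1)\le 0$, the summands have a fixed sign, so monotone/dominated convergence applies, the $u^{-1}$ singularity being integrable against the vanishing factor $1-e^{-\theta u}=O(u)$ near $0$.
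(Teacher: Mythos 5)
Your proof is correct, and while the exact formula~(\ref{EKeta}) and the evaluation $\psi(\mathbf p)=1-\theta H_{\theta+1}$ follow essentially the paper's own substitution into Lemma~\ref{kn} (the paper telescopes via $\sum_{i=3}^{n-1}\tfrac{1}{\theta+i-1}=H_{n-2+\theta}-H_{\theta+1}$ rather than your splitting, but this is cosmetic), your treatment of the crux $\sum_{j\ge 1}(-1)^j\bar a_j$ is genuinely different. The paper first computes the closed form $\bar a_j=\theta^{j+1}/\bigl(j(\theta+2)_{(j)}\bigr)$ by telescoping, recognizes the alternating series as $\tfrac{-\theta^2}{\theta+2}\,{}_2F_2(1,1;2,\theta+3;-\theta)$, and then applies Euler's integral transform~(\ref{hyp-int-2}) followed by~(\ref{hyp-int}) to reach the double integral. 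You instead swap the $i$- and $j$-sums, identify the inner $j$-sum with $\lim_n\p_\theta(\eta_i=1)-\theta/(\theta+i-1)$ via Lemma~\ref{marginal-eta}, convert both pieces to integrals, sum the geometric series in $i$ under the integral, and introduce the second integration variable through $(1-e^{-\theta x})/(\theta x)=\int_0^1 e^{-\theta xy}\,dy$. Your route is more elementary (no hypergeometric machinery) and reuses Lemma~\ref{marginal-eta} nicely; the paper's route has the side benefit of producing the explicit $\bar a_j$, which feeds the truncation error bounds mentioned right after the theorem and used for the numerical example. One small point: your blanket "fixed sign" justification genuinely covers the sum--integral interchange (the $i$-indexed integrands $(e^{-\theta u}-1)(1-u)^{\theta+i-2}$ all have one sign, so Tonelli applies), but the $i$--$j$ sum interchange involves terms alternating in $j$; there you should instead note that the unsigned double sum is finite, e.g.\ because $(\theta+i-1)_{(j+1)}\ge (\theta+i-1)(\theta+i)(\theta+3)_{(j-1)}$ gives $\sum_{i,j}\theta^{j+1}/(\theta+i-1)_{(j+1)}<\infty$, so Fubini applies. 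This is a routine fix, not a gap in the argument.
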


\begin{proof}
Letting $q_i=\theta/(\theta+i-1)$, for $i\geq 3$, we have $\lim_{n\rightarrow \infty}nq_n=\theta$, and
\[
\begin{split}
    \psi_\theta(\mathbf{p})=&\lim_{n\to\infty} \left(-\theta H_{n-1}+1+\sum_{i=3}^{n-1}\frac{\theta}{\theta+i-1}\right)\\
=&1+\theta \lim\limits_{n\rightarrow\infty}\left(- H_{n-1}+H_{n-2+\theta}-H_{\theta+1}\right)=1-\theta H_{\theta+1},
\end{split}
\]
where the last equality follows from $\lim_{n\rightarrow \infty}(H_{n-2+\theta}-H_{n-1})=0$. Also,
$$
\bar a_j=\lim_{n\to\infty}a_j(n)=\sum_{i=3}^\infty\frac{\theta^{j+1}}{(\theta+i-1)_{(j+1)}}=\frac{\theta^{j+1}}{j(\theta+2)_{(j)}}.
$$
Now we have
$$\sum\limits_{j=1}^\infty (-1)^j \bar a_j =\frac{-\theta^2}{\theta+2}\sum\limits_{j=0}^\infty \frac{1_{(j)}1_{(j)}(-\theta)^j}{2_{(j)}(\theta+3)_{(j)}j!}=\frac{-\theta^2}{\theta+2} \ _2F_2(1,1; 2,\theta+3;-\theta),
$$
where from (\ref{hyp-int-2}), the l.h.s. reduces to
\[
\begin{split}
\frac{-\theta^2 \Gamma(\theta+3)}{(\theta+2)\Gamma(1)\Gamma(\theta+2)}\int_0^1 (1-x)^{\theta+1} M(1,2,-\theta x) \ dx,
\end{split}
\]
which, from (\ref{hyp-int}), in turn reduces to the term with the double integral on the right of (\ref{ineq-eta}). Applying Lemma~\ref{kn} completes the proof.
\end{proof}
Note that, as before, the double integral on the right of (\ref{ineq-eta}) is bounded by
 \[
\sum\limits_{j=2}^{2m+1}\frac{(-1)^{j+1}\theta^j}{(j-1)(\theta+2)_{(j-1)}},
 \]
for any $m\in \N$. As an example, for $\theta=1/2$ and $m=3$, we obtain
$\lim_{n\rightarrow \infty} \e(K_n^{\eta}) - \frac{1}{2} \log n\approx 0.555069$,
 with the error $\leq 1.23333\times 10^{-7}$.


\subsection{Cycles of size $j$}
The following lemma gives the expected value of $C_j(n)$ and its asymptotics.
\begin{lemma}\label{Thm:ECk}
For $j>1$,
\begin{equation}\label{meanX}
\e_{\mathbf{p}}(C_{j}{(n)})= q_{n-j+1}\prod\limits_{l=n-j+2}^{n-1}p_l+\sum\limits_{i=j+1}^{n-1}\sum\limits_{k=0}^{n-i-1}(-1)^kq_{i-j}\prod\limits_{l=i-j+1}^{i-2}p_l\prod\limits_{l=i}^{k+i}q_l.
\end{equation}
Furthermore, if there exists a constant $\alpha \geq 0$ such that $\lim\limits_{n\rightarrow \infty}n q_n=\alpha$, then for any $m\geq 1$
\begin{equation*}
\lim_{n\longrightarrow \infty} \e_{\mathbf{p}}(C_j{(n)})=\sum\limits_{k=0}^{2m-1}(-1)^{k} \bar b_k(j)+\varepsilon(m,\mathbf{p},j),
\end{equation*}
with $\varepsilon(m,\mathbf{p},j)\leq  \bar b_{2m}(j)$, where 
\begin{equation*}
\bar b_{k}(j)= \lim_{n \to \infty}b_k(n,j)=\lim_{n \to \infty} \sum\limits_{i=j+1}^{n-k-1}q_{i-j}\prod\limits_{l=i-j+1}^{i-2}p_l\prod\limits_{l=i}^{k+i}q_l.
\end{equation*}
\end{lemma}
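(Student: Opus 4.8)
The plan is to first obtain the exact formula \eqref{meanX} by decomposing $C_j(n)$ into a sum of indicator variables, one for each possible location of a $j$-spacing, and then to pass to the limit by reorganizing the resulting double sum into an alternating series whose terms I can control, mirroring the argument of Lemma~\ref{kn}.

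For the exact formula, I would write
$$C_j(n)=\sum_{i=j+1}^{n+1}\mathbbm{1}\{X_i=1,\ X_{i-1}=\cdots=X_{i-j+1}=0,\ X_{i-j}=1\},$$
since a $j$-cycle corresponds precisely to a pattern $1\,0^{j-1}\,1$ read in decreasing index, with the leading $1$ at some position $i\in\{j+1,\dots,n+1\}$. Taking expectations and using the Markov property, each summand factorizes as $\p(X_i=1)$ times the product of the one-step transition probabilities along the pattern. Reading off the entries of $P_r^{n,\mathbf p}$, the transition $1\to 0$ contributes $1$, each internal $0\to 0$ contributes $p_l$, and the closing $0\to 1$ contributes $q_{i-j}$, so the product equals $q_{i-j}\prod_{l=i-j+1}^{i-2}p_l$. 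It then remains to substitute the marginal $\p(X_i=1)$: for $j+1\le i\le n-1$ (so $i\ge 3$ as $j\ge 2$) Lemma~\ref{ProbEta} gives $\p(X_i=1)=\sum_{k=0}^{n-i-1}(-1)^k\prod_{l=i}^{k+i}q_l$, producing the double sum; the boundary term $i=n+1$ uses $\p(X_{n+1}=1)=1$ and yields the isolated first term of \eqref{meanX}; and the term $i=n$ drops out because $\p(X_n=1)=0$.

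For the asymptotics, I would interchange the order of summation in the double sum, writing it as $\sum_{k=0}^{n-j-2}(-1)^k b_k(n,j)$ with $b_k(n,j)=\sum_{i=j+1}^{n-k-1}q_{i-j}\prod_{l=i-j+1}^{i-2}p_l\prod_{l=i}^{k+i}q_l$. The isolated first term satisfies $q_{n-j+1}\prod_{l=n-j+2}^{n-1}p_l\le q_{n-j+1}\to 0$ in the limit since $nq_n\to\alpha$ forces $q_n\to0$. The structural facts I need are that, for fixed $n$, the sequence $k\mapsto b_k(n,j)$ is nonnegative and nonincreasing (increasing $k$ shortens the range of $i$ and appends a factor $q_{k+i+1}\le1$ to every summand), and that the limiting terms $\bar b_k(j)$ are finite, nonincreasing in $k$, and tend to $0$. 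Finiteness follows from $q_{i-j}\prod_{l=i-j+1}^{i-2}p_l\prod_{l=i}^{k+i}q_l\le q_{i-j}q_i$ together with $\lim_i i^2 q_{i-j}q_i=\alpha^2$ (from $iq_i\to\alpha$) and the limit comparison test against $\sum 1/i^2$; monotonicity is inherited from the fixed-$n$ estimate, and $\bar b_k(j)\to0$ follows by dominated convergence since $\prod_{l=i}^{k+i}q_l\le q_{k+i}\to0$.

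Finally I would combine these via the alternating-series tail bound: for fixed $m$ and all $n$ large enough that $2m\le n-j-2$, nonincreasingness of $b_k(n,j)$ gives
$$\Bigl|\,\sum_{k=0}^{n-j-2}(-1)^k b_k(n,j)-\sum_{k=0}^{2m-1}(-1)^k b_k(n,j)\,\Bigr|\le b_{2m}(n,j).$$
Letting $n\to\infty$, the finite sum converges to $\sum_{k=0}^{2m-1}(-1)^k\bar b_k(j)$ and $b_{2m}(n,j)\to\bar b_{2m}(j)$, so $\liminf_n$ and $\limsup_n$ of $\e_{\mathbf p}(C_j(n))$ both lie within $\bar b_{2m}(j)$ of that partial sum; since $\bar b_{2m}(j)\to0$ and $\sum_k(-1)^k\bar b_k(j)$ converges, the limit exists, equals $\sum_{k=0}^\infty(-1)^k\bar b_k(j)$, and the remainder $\varepsilon(m,\mathbf p,j)=\sum_{k\ge 2m}(-1)^k\bar b_k(j)$ satisfies $\varepsilon(m,\mathbf p,j)\le\bar b_{2m}(j)$. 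The main obstacle is precisely this double-limit interchange: one must secure the fixed-$n$ monotonicity of $b_k(n,j)$ in $k$ so that the alternating-series remainder bound holds uniformly in $n$, which is what lets me swap the $n\to\infty$ limit past the truncation level.
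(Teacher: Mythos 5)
Your proposal is correct and follows essentially the same route as the paper's proof: the same indicator decomposition with Markov factorization and the marginals of Lemma~\ref{ProbEta} for the exact formula \eqref{meanX}, and the same interchange of summation combined with the alternating-series remainder bound (resting on nonnegativity and monotonicity of $b_k(n,j)$ in $k$, and the limit comparison test for finiteness of $\bar b_k(j)$) for the asymptotics. The only divergence is a detail: the paper establishes existence of $\lim_{n\to\infty}\e_{\mathbf{p}}(C_j(n))$ separately, by showing the double sum is bounded and eventually increasing in $n$, whereas your sandwich argument (liminf and limsup both trapped within $\bar b_{2m}(j)$ of the truncated series, with $\bar b_{2m}(j)\to 0$) yields existence directly and bypasses that monotonicity-in-$n$ estimate.
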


\begin{proof}
Write
\begin{multline*}\label{qki}
\mathcal R_{j,i}(n):=\p_{\mathbf{p}}\left(X_i^{n}X_{i-j}^{n}\prod_{l=1}^{j-1}\left(1-X_{i-l}^{n}\right)=1\right)=\p(X_i^{n}=1) q_{i-j}\prod\limits_{l=i-j+1}^{i-2}p_l\\
= \sum\limits_{k=0}^{n-i-1}(-1)^k q_{i-j}\prod\limits_{l=i-j+1}^{i-2}p_l\prod\limits_{l=i}^{k+i}q_l,
\end{multline*}
for $j+1\leq i\leq n-1$, and
\begin{equation*}\label{qkk}
\mathcal R_{j,n+1}(n)= q_{n-j+1}\prod\limits_{l=n-j+2}^{n-1}p_l.
\end{equation*}

The first part of the theorem follows from
$$
\e_{\mathbf{p}}(C_{j}{(n)})=\sum\limits_{i=j+1}^{n+1} \mathcal{R}_{j,i}(n).
$$

For the second part, note that for any $j\geq 2$, $\mathcal{R}_{j,n+1}(n)\to 0$, as $n\to\infty$. As $q_1=1$
\begin{eqnarray*}
\sum_{i=j+1}^{n-1}\sum_{k=0}^{n-i-1}(-1)^k q_{i-j}\prod\limits_{l=i-j+1}^{i-2}p_l\prod\limits_{l=i}^{k+i}q_l &\leq &\sum_{i=j+1}^{n-1} q_iq_{i-j}\prod\limits_{l=i-j+1}^{i-2}p_l\\
&\leq & \sum_{i=j+1}^\infty q_iq_{i-j}<\infty,
\end{eqnarray*}
where the l.h.s. is positive. In fact, for any fixed $j>1$, there exists $N_j$ large enough that the l.h.s. of the last inequalities is an increasing function of $n$, for $n\geq N_j$. To see this, temporarily denote by $h_j(n)$  the l.h.s. of the last inequality. Then
$$
h_j(n+1)-h_j(n)=\sum_{i=j+1}^n(-1)^{n-i}q_{i-j}\prod_{l=i}^nq_l\prod_{l=i-j+1}^{i-2}p_l= q_nq_{n-j}\prod_{l=n-j+1}^{n-2}p_l+o(n^{-2}),
$$
for large enough $n$. Thus,  $\lim_{n\to\infty}h_j(n)$ and therefore $\lim_{n\to\infty}\e_{\mathbf{p}}(C_j(n))$ exist and are finite. On the other hand, as in the proof of Lemma~\ref{kn}, from the limit comparison test $\bar b_k(j)=\lim_{n\to\infty}b_k(n,j)$ exists and is finite. Now, as $b_k(n,j)$, for fixed $n$, is a decreasing sequence as $k$ increases, for any $m\geq 1$,
\begin{eqnarray*}
\lim_{n\to\infty}\e_{\mathbf{p}}(C_j(n))&=&\lim_{n\to\infty} h_j(n)=\lim_{n\to\infty}\sum_{k=0}^{n-j-2}(-1)^kb_k(n,j)\\
&=& \sum_{k=0}^{2m-1}(-1)^k \bar b_k(j)+\varepsilon(m,\mathbf{p},j),
\end{eqnarray*}
where $\varepsilon(m,\mathbf{p},j)\leq \bar b_{2m}(j)$.
\end{proof}

Let $C_{j}^{\eta}(n)$ be the number of the cycles of size $j$ in the playground game $\eta^n$.  Applying Lemma~\ref{Thm:ECk}  to the $\eta$ process,  we obtain the following theorem. By convention, we let $\theta_{(-1)}=\theta_{(0)}=1$ and $\sum_{i=m}^n a_i=0$, for $m>n$.

\begin{theorem}\label{cor:ECk}
For $2\leq j\leq n-2$,
\begin{multline}\label{Eq-cor:ECK}
\e_\theta(C^{\eta}_{j}(n))= \frac{\theta (n-j+1)_{(j-2)}}{(\theta+n-j)_{(j-1)}}+\sum\limits_{k=1}^{n-j-1}\frac{(-1)^{k+1}\theta^k(j-2)!}{(\theta+2)_{(j-3)}(\theta+j)_{(k)}}\\
+\sum\limits_{i=j+3}^{n-1}\sum\limits_{k=1}^{n-i}\frac{(-\theta)^{k+1}(i-j)_{(j-2)}}{(\theta+i-j-1)_{(j-1)}(\theta+i-1)_{(k)}},
\end{multline}
while $\e_\theta(C^{\eta}_{n}(n))=(n-2)!/(\theta+2)_{(n-3)}$, and $\e_\theta(C^{\eta}_{n-1}(n))=0$. For $j\geq 2$, we have
\begin{equation}\label{limECj}
\begin{split}
&\lim_{n\longrightarrow \infty} \e(C^{\eta}_{j}(n))=\int_0^1\int_0^1\frac{\theta^2e^{-\theta y}x^{\theta-1}(1-x)^{j-2}(1-y)^{\theta+j-1}}{(1-x+xy)^{j-1}}dxdy\\[6pt]
&+\mathbbm{1}\{j\geq 3\}\frac{\theta^3(j-2)!}{\theta_{(j+1)}}\left(\theta+j-1-((\theta+j-1)^2+j-1)\int_0^1 e^{-\theta x}(1-x)^{\theta+j}dx\right)\\[6pt]
&-\mathbbm{1}\{j=2\}\frac{\theta^2}{\theta+1}\int_0^1 e^{-\theta x}(1-x)^{\theta+2}dx.
\end{split}
\end{equation}
\end{theorem}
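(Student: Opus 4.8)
The plan is to specialize Lemma~\ref{Thm:ECk} to $q_i=\theta/(\theta+i-1)$, $p_i=(i-1)/(\theta+i-1)$ for $i\geq 3$, while keeping track of the exceptional values $q_1=p_2=1$ and $q_2=0$ built into the $\eta$-chain. First I would evaluate the two telescoping products in (\ref{meanX}): for $i\geq j+3$ one gets $\prod_{l=i-j+1}^{i-2}p_l=(i-j)_{(j-2)}/(\theta+i-j)_{(j-2)}$ and $\prod_{l=i}^{k+i}q_l=\theta^{k+1}/(\theta+i-1)_{(k+1)}$, so that after absorbing $q_{i-j}=\theta/(\theta+i-j-1)$ and re-indexing $k\mapsto k+1$ the generic summand becomes $(-\theta)^{k+1}(i-j)_{(j-2)}/\big[(\theta+i-j-1)_{(j-1)}(\theta+i-1)_{(k)}\big]$, which is exactly the triple sum of (\ref{Eq-cor:ECK}). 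The boundary is the subtle point: the term $i=j+2$ carries $q_2=0$ and drops out (this is why the triple sum begins at $i=j+3$), whereas $i=j+1$ carries $q_1=1$ and a product $\prod_{l=2}^{j-1}p_l$ containing $p_2=1$; evaluating it separately gives the single sum $\tfrac{(j-2)!}{(\theta+2)_{(j-3)}}\sum_{k\geq1}(-1)^{k+1}\theta^k/(\theta+j)_{(k)}$. The leading term of (\ref{meanX}) telescopes to $\theta(n-j+1)_{(j-2)}/(\theta+n-j)_{(j-1)}$ when $j\leq n-2$. The two special cases are immediate: $C^\eta_{n-1}(n)\equiv0$ since an $(n-1)$-cycle would force a fixed point, violating the derangement constraint, while $\e_\theta(C^\eta_n(n))$ is the leading term of (\ref{meanX}) at $j=n$ (the double sum being empty), namely $q_1\prod_{l=2}^{n-1}p_l$, which telescopes using $p_2=1$ to $(n-2)!/(\theta+2)_{(n-3)}$.

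For the asymptotics I would pass to the limit, justified by the alternating-series tail bound $\varepsilon(m,\mathbf{p},j)\leq\bar b_{2m}(j)$ of Lemma~\ref{Thm:ECk}; the leading term is $O(1/n)$ and vanishes. The cleanest route to (\ref{limECj}) is \emph{not} to take limits term-by-term in (\ref{Eq-cor:ECK}), but to rewrite the $i$-sum as a single ``formulaic'' sum over all $i\geq j+1$ (pretending $p_2=1/(\theta+1)$, $q_2=\theta/(\theta+1)$) plus two correction terms. For the formulaic sum, summing the geometric $k$-series leaves the inner factor $\tfrac{\theta^2}{\theta+i-1}M(1,\theta+i,-\theta)$, and (\ref{hyp-int}) collapses this to $\theta^2\int_0^1 e^{-\theta y}(1-y)^{\theta+i-2}\,dy$. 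Writing $\tfrac{1}{(\theta+i-j-1)_{(j-1)}}=\tfrac{1}{\Gamma(j-1)}\int_0^1 x^{\theta+i-j-2}(1-x)^{j-2}\,dx$ as a Beta integral and using the generating identity $\sum_{i'\geq1}(i')_{(j-2)}z^{i'}=(j-2)!\,z/(1-z)^{j-1}$ with $z=x(1-y)$ then resums the $i$-series, producing precisely the double integral $\theta^2\iint \tfrac{e^{-\theta y}x^{\theta-1}(1-x)^{j-2}(1-y)^{\theta+j-1}}{(1-x+xy)^{j-1}}\,dx\,dy$ of (\ref{limECj}), since $1-z=1-x+xy$.

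It remains to match the two corrections with the indicator terms. Because the $\eta$-chain has $q_2=0$, the formulaic $i=j+2$ term is spurious and must be subtracted; because it has $p_2=1$ rather than $1/(\theta+1)$, and $p_2$ occurs in $\prod_{l=2}^{j-1}p_l$ only when $j\geq3$, the formulaic $i=j+1$ term is too small by a factor $\theta+1$ for $j\geq 3$, so one adds $\theta$ times it — this is the origin of the indicators $\mathbbm{1}\{j\geq3\}$ and $\mathbbm{1}\{j=2\}$. After summing the $k$-series in each correction one is left with $M(1,\theta+j+1,-\theta)$ and $M(1,\theta+j+2,-\theta)$; converting via (\ref{hyp-int}) and applying one integration by parts to raise the exponent from $\theta+j-1$ to $\theta+j$ turns the $i=j+1$ correction into the free term $(\theta+j-1)$ plus a multiple of $\int_0^1 e^{-\theta x}(1-x)^{\theta+j}\,dx$. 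Combining its integral coefficient $\theta(\theta+j-1)$ with the $i=j+2$ correction's coefficient $(j-1)(\theta+j)$ yields exactly $(\theta+j-1)^2+(j-1)$, recovering the $\mathbbm{1}\{j\geq3\}$ line; for $j=2$ only the $i=j+2$ correction survives, giving the $\mathbbm{1}\{j=2\}$ line.

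The hard part, which I expect to absorb most of the work, is the resummation producing the double integral: arranging the two successive integral representations in the right order and recognizing the denominator $(1-x+xy)^{j-1}$ through the binomial generating function, together with the bookkeeping that reconciles the exact boundary values $q_1,q_2,p_2$ of the $\eta$-chain with the formulaic expressions, and the integration by parts that aligns the exponents in the correction terms. Throughout I would justify interchanging the limit with the sums and integrals by the same monotonicity and limit-comparison arguments already used in the proof of Lemma~\ref{Thm:ECk}.
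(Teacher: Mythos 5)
Your proposal is correct and takes essentially the same route as the paper: specialize Lemma~\ref{Thm:ECk} with $q_i=\theta/(\theta+i-1)$, the boundary values $q_1=p_2=1$, $q_2=0$ producing the three blocks of (\ref{Eq-cor:ECK}) and the two special cases, and then compute the limit as a formulaic sum over all $i\geq j+1$ (resummed into the double integral) plus corrections at $i=j+1$ (add $\theta$ times the formulaic term, only when $j\geq 3$) and $i=j+2$ (subtract the spurious term), combined by the same integration by parts to yield the coefficient $(\theta+j-1)^2+j-1$. The only cosmetic difference is that you derive the resummation via the Beta-integral representation of $1/(\theta+i-j-1)_{(j-1)}$ and the binomial generating function, where the paper invokes the Euler integral representation of ${}_2F_1$; these are the same computation.
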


\begin{proof}
Having $q_i=\theta/(\theta+i-1)$, for $i\geq 3$, (\ref{meanX}) gives the first part of the theorem. Also, as $nq_n\to \theta$, as $n\to \infty$, from Lemma~\ref{Thm:ECk},  $\lim_{n\to \infty}\e_\theta(C^{\eta}_{j}(n))$ exists and is finite. To find this limit, note that for $j\geq 2$
\begin{eqnarray*}
0\leq \lim_{n\to \infty}\sum_{i=n}^\infty\frac{\p_{\theta}(\eta_i^n=1)\theta(i-j)_{(j-2)}}{(\theta+i-j-1)_{(j-1)}}&\leq& \lim_{n\to \infty}\sum_{i=n}^\infty\p(\eta_i^n=1)q_{i-j}\\
&\leq& \lim_{n\to \infty}\sum_{i=n}^\infty q_iq_{i-j}=0.
\end{eqnarray*}
Thus, we get
\begin{equation}\label{Eq-lim-Cj}
\begin{split}
\lim_{n\longrightarrow \infty} &\e(C^{\eta}_{j}(n))=\sum_{i=j+1}^\infty\frac{\theta(i-j)_{(j-2)}}{(\theta+i-j-1)_{(j-1)}} \lim_{n\longrightarrow \infty} \p_{\theta}(\eta_i^n=1)\\[6pt]
&-\frac{\theta (j-1)!}{(\theta+1)_{(j-1)}} \lim_{n\longrightarrow \infty} \p_{\theta}(\eta_{j+2}^n=1)
-\frac{\theta(j-2)!}{\theta_{(j-1)}} \lim_{n\longrightarrow \infty} \p_{\theta}(\eta_{j+1}^n=1)\\[6pt]
&+\frac{(\theta+1)\theta(j-2)!}{\theta_{(j-1)}} \lim_{n\longrightarrow \infty} \p_{\theta}(\eta_{j+1}^n=1),
\end{split}
\end{equation}
where the second term on the right of the last equation is subtracted from the sum as $\p_\theta(\eta_{j+2}^n=\eta_{2}^n=1,\eta_{j+1}^n=\dots=\eta_{3}^n=0)=0$ while the two last terms appear because of the discrepancy between the first term of the sum (for $i=j+1$) and the value of $\p_\theta(\eta_{j+1}^n=\eta_{1}^n=1,\eta_{j+1}^n=\dots=\eta_{2}^n=0)=(\theta+1)\theta(j-2)!\p(\eta_{j+1}^n=1)/\theta_{(j-1)}$. To calculate (\ref{Eq-lim-Cj}), we first note that
\begin{eqnarray*}
\sum_{r=0}^\infty\frac{(r+1)_{(j-2)}}{(\theta+r)_{(j-1)}}(1-y)^r&=&\frac{(j-2)!}{\theta_{(j-1)}}\sum_{r=0}^\infty\frac{(j-1)_{(r)}\theta_{(r)}}{(\theta+j-1)_{(r)}r!}(1-y)^r\\
&=& \frac{\Gamma(j-1)\Gamma(\theta)}{\Gamma(\theta+j-1)} {}_2F_1(j-1,\theta;\theta+j-1;1-y)\\
&=&\int_0^1 x^{\theta-1}(1-x)^{j-2}(1-x(1-y))^{-(j-1)}dx,
\end{eqnarray*}
where the hypergeometric function $_2F_1(a,b;c;z)=\sum_{r=0}^\infty a_{(r)}b_{(r)}z^r/(c_{(r)}r!)$, and the last equality is given by the Euler type integral representation for $_2F_1$, for $Re(c)>Re(b)>0$. Having this, from Lemma~\ref{marginal-eta}, after interchanging the sum and integral, the first term on the right of (\ref{Eq-lim-Cj}) reduces to
\begin{equation*}
\begin{split}
\int_0^1&\theta^2 e^{-\theta y}(1-y)^{\theta+j-1}\sum_{r=0}^\infty\frac{(r+1)_{(j-2)}}{(\theta+r)_{(j-1)}}(1-y)^rdy\\[6pt]
&=\int_0^1\int_0^1\frac{\theta^2e^{-\theta y}x^{\theta-1}(1-x)^{j-2}(1-y)^{\theta+j-1}}{(1-x+xy)^{j-1}}dxdy.
\end{split}
\end{equation*}

Again from Lemma~\ref{marginal-eta}, the second, third and the fourth terms on the right of (\ref{Eq-lim-Cj}) equal
\begin{multline*}
\frac{\theta^3(j-2)!}{\theta_{(j)}}\left(-(j-1)\int_0^1 e^{-\theta u}(1-u)^{\theta+j}du+(\theta+j-1)\int_0^1 e^{-\theta u}(1-u)^{\theta+j-1}du\right)\\[6pt]
=\frac{\theta^3(j-2)!}{\theta_{(j)}}\left\lbrace-(j-1)\int_0^1 e^{-\theta u}(1-u)^{\theta+j}du\right.\\[6pt]
\left.-\left(\frac{\theta+j-1}{\theta+j}\right) \left(-1+\theta\int_0^1 e^{-\theta u}(1-u)^{\theta+j}du\right)\right\rbrace,
\end{multline*}
which completes the proof of this part for $j\geq 3$, after simplification. Similarly, for $j=2$, we can write
\begin{equation*}
\begin{split}
\lim_{n\longrightarrow \infty} \e(C^{\eta}_{j}(n))&=\sum_{i=j+1}^\infty\frac{\theta(i-j)_{(j-2)}}{(\theta+i-j-1)_{(j-1)}} \lim_{n\longrightarrow \infty} \p_{\theta}(\eta_i^n=1)\\
&-\frac{\theta}{\theta+1}\lim_{n\longrightarrow \infty} \p_{\theta}(\eta_4^n=1)
\end{split}
\end{equation*}
where the first term on the r.h.s. equals the first term of (\ref{Eq-lim-Cj}) and the last term on the r.h.s. equals 
$$
-\frac{\theta^2}{\theta+1}\int_0^1 e^{-\theta x}(1-x)^{\theta+2}dx.
$$
\end{proof}

Although very useful for numerical evaluations, the double integral in (\ref{limECj}) does not provide a simple expression for $\lim_{n\rightarrow \infty} \e_\theta\, C_j^\eta(n)$. The following result gives an approximation for the limit.

\begin{proposition}\label{prop:limECeta}
For any $j\geq 2$ and  $m\geq 1$, we have
\begin{multline}\label{ApproxlimECj}
\lim_{n\longrightarrow \infty} \e(C^{\eta}_{j}(n))= \mathbbm 1(j\geq 3) \frac{\theta(j-2)!}{(\theta+2)_{(j-3)}}\int_0^1 e^{-\theta x}(1-x)^{\theta+j-1} \ dx \\[4pt]
+\mathbbm{1}(j=2)\theta\int_0^1 e^{-\theta x}(1-x)^{\theta+1} \ dx+\sum\limits_{k=1}^{2m}(-1)^{k+1} \bar b_k{(\theta,j)}+\varepsilon(m,\theta,j),
\end{multline}
with $\varepsilon(m,\theta,j)\leq  \bar b_{2m+1}{(\theta,j)}$, where 
 \begin{equation*}
\begin{split}
\bar b_{k}{(\theta,j)}=& \frac{\theta^{k}(k-1)!(k(j-1)+\theta(k+j-1))}{(\theta+1)_{(k)}(j-1)_{(k+1)}}\\
&-\frac{\theta^{k}(j-2)!((k-1+(\theta+1)j)(\theta+j)-k)}{(\theta+1)_{(k+j)}}.
\end{split}
\end{equation*}
\end{proposition}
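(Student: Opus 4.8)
The plan is to start not from the general Lemma~\ref{Thm:ECk} but from equation~(\ref{Eq-lim-Cj}) in the proof of Theorem~\ref{cor:ECk}, which already writes $\lim_{n}\e(C_j^\eta(n))$ as a single series in the limiting marginals $\pi_i:=\lim_{n\to\infty}\p_\theta(\eta_i^n=1)$ plus a handful of boundary corrections. The first step is purely organizational: collect the terms carrying $\pi_{j+1}$ and those carrying $\pi_{j+2}$. A direct check shows the $\pi_{j+2}$ contributions cancel (the $i=j+2$ summand equals the second correction term, since $(2)_{(j-2)}=(j-1)!$), while the $\pi_{j+1}$ contributions combine, via $\theta_{(j-1)}=\theta(\theta+1)(\theta+2)_{(j-3)}$, into the single coefficient $(j-2)!/(\theta+2)_{(j-3)}$. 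Replacing this $\pi_{j+1}$ by its integral form from Lemma~\ref{marginal-eta} produces exactly the leading integral term of~(\ref{ApproxlimECj}). The case $j=2$ must be treated on its own because the boundary structure of~(\ref{Eq-lim-Cj}) differs there, but it yields the same expression with $\pi_3$ in place of $\pi_{j+1}$; the degenerate conventions $\theta_{(-1)}=\theta_{(0)}=1$ make the two leading terms coincide.

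What remains after this reduction is the tail $\sum_{i=j+3}^{\infty}\frac{\theta(i-j)_{(j-2)}}{(\theta+i-j-1)_{(j-1)}}\,\pi_i$. Into it I substitute the alternating-series form of the marginal, $\pi_i=\sum_{k\ge 1}(-1)^{k+1}\theta^{k}/(\theta+i-1)_{(k)}$, also from Lemma~\ref{marginal-eta}. For each fixed $i$ the numbers $\theta^{k}/(\theta+i-1)_{(k)}$ are positive and strictly decreasing in $k$, so the inner series is genuinely alternating; bracketing its partial sums termwise in $i$ and then summing against the positive weights $\frac{\theta(i-j)_{(j-2)}}{(\theta+i-j-1)_{(j-1)}}$ produces simultaneously the truncation $\sum_{k=1}^{2m}(-1)^{k+1}\bar b_k(\theta,j)$ and the one-sided remainder bound $0\le\varepsilon(m,\theta,j)\le\bar b_{2m+1}(\theta,j)$, where
\[
\bar b_k(\theta,j)=\theta^{\,k+1}\sum_{i=j+3}^{\infty}\frac{(i-j)_{(j-2)}}{(\theta+i-j-1)_{(j-1)}\,(\theta+i-1)_{(k)}}.
\]
The monotonicity of $\bar b_k$ in $k$ required for the alternating bound is inherited from the termwise monotonicity just used, since each $\bar b_k$ is a fixed positive-weighted sum of sequences decreasing in $k$.

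The main obstacle is evaluating this inner $i$-series into the stated two-term closed form. After reindexing $m=i-j$ the summand becomes a hypergeometric term in $m$: a ratio of rising factorials whose denominator is the product $(\theta+m-1)_{(j-1)}\,(\theta+m+j-1)_{(k)}$, i.e.\ two Pochhammer blocks separated by the single missing factor $\theta+m+j-2$. The sum is therefore a ${}_3F_2$ evaluated at $1$, and I would collapse it either by a partial-fraction decomposition in $m$ reducing it to Gauss-summable ${}_2F_1(\,\cdot\,;1)$ pieces, or---more in line with the shape of the answer---by exhibiting a Pochhammer-ratio antidifference and telescoping. The delicate bookkeeping is to verify that the lower limit $i=j+3$ is exactly what generates the polynomial factors $k(j-1)+\theta(k+j-1)$ and $(k-1+(\theta+1)j)(\theta+j)-k$, and that the $j=2$ and $j\ge 3$ formulas agree under the stated conventions. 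A final sanity check against small cases (for instance $\bar b_1(\theta,2)=\theta^{2}(2\theta+5)/[2(\theta+2)(\theta+3)]$, obtained directly from the telescoping sum $\theta^{2}\sum_{i\ge 5}\frac{1}{(\theta+i-3)(\theta+i-1)}$) fixes any stray constants.
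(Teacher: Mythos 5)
Your proposal is correct and is essentially the paper's own argument: the paper likewise splits the exact formula (\ref{Eq-cor:ECK}) into a vanishing boundary term, a leading piece whose limit is $\frac{(j-2)!}{(\theta+2)_{(j-3)}}\lim_{n\to\infty}\p_\theta(\eta_{j+1}^n=1)$ identified with the integral via Lemma~\ref{marginal-eta} (this is exactly the coefficient your cancellation of the $\pi_{j+1},\pi_{j+2}$ boundary terms in (\ref{Eq-lim-Cj}) produces), and an alternating-in-$k$ double sum that it truncates by precisely your bracketing argument, with $\bar b_k(\theta,j)$ arising as the same infinite series $\lim_{n\to\infty}b_k(n,\theta,j)$. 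One point of comparison: the paper never carries out the closed-form evaluation of that series either (it simply asserts that $\lim_{n\to\infty}b_k(n,\theta,j)$ equals the stated two-term expression), so your hypergeometric/telescoping sketch and the verification of $\bar b_1(\theta,2)$ go slightly beyond, rather than fall short of, the paper's own proof.
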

\begin{proof}
As $n\rightarrow\infty$, the first term on the right of (\ref{Eq-cor:ECK}) converges to $0$ and the second term converges to the first and the second terms on the right of (\ref{ApproxlimECj}), for $j\geq 3$ and $j=2$, respectively.  Let
\[
b_k(n,\theta,j)=\sum\limits_{i=j+3}^{n-k} \frac{\theta^{k+1} (i-j)_{(j-2)}}{(\theta+i-j-1)_{(j-1)}(\theta+i-1)_{(k)}},
\]
and note that $\lim_{n\rightarrow \infty} b_k(n,\theta,j)=\bar{b}_k(\theta,j)$. From Lemma \ref{Thm:ECk}, the limit of the double sum on the r.h.s. of (\ref{Eq-cor:ECK}) can be written
\[
\begin{split}
\lim\limits_{n\rightarrow \infty} \sum\limits_{k=1}^{n-j-3} (-1)^{k+1} b_k(n,\theta,j)&=\lim\limits_{n\rightarrow \infty} \sum\limits_{k=1}^\infty (-1)^{k+1} b_k(n,\theta,j)\\
&= \sum\limits_{k=1}^{2m} (-1)^{k+1} \bar b_k(\theta,j) +\varepsilon(m,\theta,j),
\end{split}
\]
where 
\[
\begin{split}
0\leq \varepsilon(m,\theta,j)=\lim\limits_{n\rightarrow \infty} \sum\limits_{k=2m+1}^\infty (-1)^k b_k(n,\theta,j)\leq \lim\limits_{n\rightarrow \infty} b_{2m+1}(n,\theta,j).
\end{split}
\]
\end{proof}

As an example,  Table~\ref{tab:limECk} provides the approximation for the $\e(C^{\eta}_{j}(n))$ as $n$ tends to infinity, for $2\leq j\leq 7$ and $m=2$.  The numerical results perfectly match with the exact values derived from (\ref{limECj}). The values are compared to their counterpart for the classical Feller coupling $\tilde{\xi}$, $\lim\limits_{n\longrightarrow \infty}\e({C}^{\tilde{\xi}}_j(n))=\theta/j$.

\begin{table}[H]
\begin{center}
\caption{The approximation of the $\e C_j^{\eta}(n)$ as $n$ tends to infinity for $\theta=0.5$, $2\leq j\leq 7$, by applying Proposition~\ref{prop:limECeta} for $m=2$.}\label{tab:limECk}
\begin{tabular}{clcl}
\hline
$j$  &$\lim\limits_{n\longrightarrow \infty} \e C_j^{\eta}(n)$  & Error &$\theta/j$\rule{0pt}{2.6ex}\rule[-1.2ex]{0pt}{0pt}\\
& approx. & & \\
\hline
\rule{0pt}{3ex}
$2$ &$0.255318$ &$9.86668\times 10^{-7}$&$0.250$\\
$3$ &$0.19468$ &$4.38404\times 10^{-7}$&$0.167$\\
$4$&$0.137891$ &$2.20947\times 10^{-7}$&$0.125$\\
$5$&$0.107192$ &$1.21856\times 10^{-7}$&$0.100$\\
$6$&$0.0878281$ &$7.19514\times 10^{-8}$&$0.083$\\
$7$&$0.0744583$ &$4.48278\times 10^{-8}$&$0.072$\\\hline
\end{tabular}
\end{center}
\end{table}
Although the details are omitted, one can also easily obtain the variance
\[
\begin{split}
{\rm Var}(C_{j}(n))&=\sum\limits_{i=j+1}^{n+1} \mathcal R^{(n)}_{j,i}(1-\mathcal R^{(n)}_{j,i}) +2 \sum\limits_{l\leq i-j} \mathcal R^{(n)}_{j,i}\mathcal R^{(i-j-1)}_{j,l}-2 \sum\limits_{l<i} \mathcal R^{(n)}_{j,i}\mathcal R^{(n)}_{j,l}.
\end{split}
\]

Some numerical examples of the variance for the $\eta$ process are displayed in Table~{\ref{tab:VCj}.

\begin{table}
\begin{center}
\caption{The variance of $C_j^{\eta}(n)$ for $\theta=0.5$, $3\leq j\leq 7$.}\label{tab:VCj}
\begin{tabular}{clcc}
\hline
$j$  &$n=20$  &$n=50$&$n=100$ \rule{0pt}{2.6ex}\rule[-1.2ex]{0pt}{0pt}\\
\hline
\rule{0pt}{3ex}
$3$ &$0.185732$ &$0.177823 $ &$0.175253$ \\
$4$& $0.142278$ &$0.133938$ &$0.131308$\\
$5$&$0.116493$ &$0.107688$ &$0.104996$\\
$6$&$0.0996403$&$0.090335$ &$0.087578$\\
$7$&$0.087877$ &$0.078045$ &$0.075221$\\\hline
\end{tabular}
\end{center}
\end{table}

\section{Conditioning on generalized Feller coupling}\label{sec:coupling}
In this section we establish necessary and sufficient conditions for a conditional relation between the derangement Markov chains $X^{n,\mathbf{p}}$ and $(Y_n,\cdots, Y_1)$, for $n\in \N$.  We explore some properties of the Generalized Feller Coupling (GFC) $Y^{\bm\theta}=(Y^{\bm\theta})_{i=1}^\infty$,  along with some of its applications for $X^{n,\textbf{p}}$ via the conditional relation.  We also obtain the weak limit of $X^{n,\mathbf{p}}$, as $n\rightarrow \infty$, as a $\{0,1\}$-valued infinite Markov chain, and provide a conditional relation between the limit and the infinite sequence $Y^{\bm\theta}$. We also apply the theory developed in this section to the specific example of the playground process $\eta$.


\subsection{A conditional relation}\label{sec:conditional}
~\cite{sjt2021} constructs a Markov chain $\tilde\eta_n,\cdots, \tilde\eta_1$ generating the cycle counts of the random derangement sampled from ESF$_n(\theta)$ conditioned on not having fixed points. In other words, 
\begin{equation}\label{FClaw}
\mcL(C^{\tilde\eta}_2(n),\ldots,C^{\tilde\eta}_n(n)) = \mcL(C^{\tilde\xi}_2(n),\ldots,C^{\tilde\xi}_n(n) \mid C^{\tilde\xi}_1(n) = 0),
\end{equation}
where as before,  $\tilde\xi$ denotes the classical Feller coupling.  Studying a Markov chain is not always easy if one directly uses its transition probabilities, while using the conditional relations such as (\ref{FClaw}) sometimes makes computations easier.  Motivated by this, one can ask if there exists an infinite sequence of independent $\{0,1\}$-valued random variables $Y_1=1, Y_2, \cdots$,  for which the law of $(Y_i)_{i=1}^n$ conditional on having no $11$ patterns in $1,Y_n,\cdots,Y_1=1$, coincides with that of the Markov chain $X_n^n,...,X_1^n$.  More specifically,   we define a generalized Feller coupling (GFC) as a sequence of independent Bernoulli $\{0,1\}$-valued random variables $Y^{\bm{\theta}}=Y :=(Y_i)_{i=1}^\infty$ such that 
\begin{equation*}\label{probxi}
\p_{\bm{\theta}}(Y_i=0)=\frac{i-1}{i-1+\theta_{i}},
\end{equation*}
where $\bm{\theta}=(\theta_i)_{i\in \N}$ is a sequence of strictly positive real numbers. In this paper, we always assume that $\theta_1=1$.  For $j\in \N$, let 
\begin{equation}\label{Deltaj}
\Delta_j:=\{(a_j,\ldots,a_1)\in \{0,1\}^j: a_1=1, a_j+\sum\limits_{i=1}^{j-1} a_i a_{i+1}=0\},
\end{equation}
and let $Y^{n,\bm{\theta}}=Y^n :=(Y_n,\cdots, Y_1)$.   Theorem~\ref{irvgeneral} gives the necessary and sufficient conditions under which
\begin{equation}\label{condp}
\p_{\mathbf{p}}(X_n^n=a_n,...,X_1^n=a_1)=\p_{\bm \theta}(Y_n=a_n,...,Y_1=a_1|Y^n\in\Delta_n),
\end{equation}
for $(a_n,...,a_1)\in \Delta_n$. In other words,  we investigate the necessary and sufficient conditions under which the cycle counts $(C_2(n),\ldots,C_n(n))$  of the permutation generated by $X^{n,\mathbf{p}}$ have a distribution determined by
\begin{equation}\label{GFClaw}
\mcL(C_2(n),\ldots,C_n(n)) = \mcL(\tilde C_2(n),\ldots,\tilde C_n(n) \mid \tilde C_1(n) = 0),
\end{equation}
where $(\tilde C_2(n),\ldots,\tilde C_n(n))$ is the cycle counts of $Y_n,...,Y_1=1$.  Furthermore,  having a Markov chain $X^{n,\mathbf{p}}$, we can find a sequence $\bm\theta$ such that (\ref{condp}) holds,  and vice versa, having a sequence of independent random variables $Y^{n,\bm\theta}$, we can find a sequence $\mathbf{p}$ such that (\ref{condp}) holds.  To make this precise,  let $\gamma_i(\bm\theta)=\p_{\bm\theta}((Y_i,...,Y_1)\in\Delta_i)$ and note that
\begin{equation}\label{rec2}
\gamma_i(\bm\theta)=\frac{i-1}{i-1+\theta_i}\left(\gamma_{i-1}(\bm\theta)+\frac{\theta_{i-1}}{i-2+\theta_{i-1}}\gamma_{i-2}(\bm\theta)\right),
\end{equation}
for $i\geq 3$, with initial conditions $\gamma_1=0$ and $\gamma_2=\p(Y_2=0)=1/(1+\theta_2)$.  Let 
$$
\bm\theta_{<n>}:=\theta_1(\theta_2+1)(\theta_3+2)\cdots(\theta_{n}+n-1),
$$
and recall we assume $\theta_1=1$ in this paper. The next proposition gives an exact formula for $\gamma(\bm\theta)$.

\begin{proposition}\label{prop:gammai}
For $i\geq 2$
\begin{equation*}\label{gammai_theta}
\gamma_i(\bm\theta)= G_{i-1}(\bm\theta)\frac{(i-1)!}{\bm\theta_{<i>}},
\end{equation*}
where $G_0(\bm\theta)=0$ and $G_1(\bm\theta)=G_2(\bm\theta)=1$,
$$
G_i(\bm\theta):=1+\sum_{k=1}^{\lfloor \frac{i-1}{2}\rfloor}\sum_{(i_1,\dots,i_k)}\frac{\theta_{i_1}\cdots\theta_{i_k}}{(i_1-1)\cdots(i_k-1)},
$$
for $i\geq 3$, and the last sum is over all $(i_1,\dots,i_k)$ such that $2<i_j\leq i$ and $i_{j+1}>i_j+1$.
\end{proposition}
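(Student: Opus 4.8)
The plan is to prove the formula by induction on $i$, converting the given recursion (\ref{rec2}) for $\gamma_i(\bm\theta)$ into a simpler three-term recursion for $G_i(\bm\theta)$. First I would substitute the claimed identity $\gamma_j(\bm\theta) = G_{j-1}(\bm\theta)\,(j-1)!/\bm\theta_{<j>}$ for $j = i-1$ and $j = i-2$ into the right-hand side of (\ref{rec2}). Using $\bm\theta_{<i-1>} = \bm\theta_{<i-2>}(\theta_{i-1}+i-2)$, the two terms inside the bracket share the common denominator $\bm\theta_{<i-1>}$, so that
\[
\gamma_i(\bm\theta) = \frac{i-1}{\theta_i+i-1}\cdot\frac{1}{\bm\theta_{<i-1>}}\Big(G_{i-2}(\bm\theta)\,(i-2)! + \theta_{i-1}G_{i-3}(\bm\theta)\,(i-3)!\Big).
\]
Comparing this with the target value $G_{i-1}(\bm\theta)\,(i-1)!/\bm\theta_{<i>}$ and using $\bm\theta_{<i>} = \bm\theta_{<i-1>}(\theta_i+i-1)$, the factor $1/(\bm\theta_{<i-1>}(\theta_i+i-1))$ cancels and, after dividing by $(i-1)!$, the whole claim reduces to the recursion
\[
G_{m}(\bm\theta) = G_{m-1}(\bm\theta) + \frac{\theta_m}{m-1}\,G_{m-2}(\bm\theta),\qquad m\geq 2.
\]
The base cases $\gamma_1 = 0$ and $\gamma_2 = 1/(1+\theta_2)$ agree with the ansatz directly (using $\theta_1 = 1$, $G_0 = 0$, $G_1 = 1$), so it remains only to establish this $G$-recursion.

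To verify the $G$-recursion I would first reinterpret the definition of $G_i(\bm\theta)$ as a weighted enumeration of \emph{sparse} subsets: the inner sum over $(i_1,\ldots,i_k)$ with $2 < i_j \leq i$ and $i_{j+1} > i_j + 1$ ranges exactly over the increasing listings of $k$-element subsets of $\{3,4,\ldots,i\}$ containing no two consecutive integers, and the leading $1$ accounts for the empty subset, so that
\[
G_i(\bm\theta) = \sum_{S}\ \prod_{\ell\in S}\frac{\theta_\ell}{\ell-1},
\]
the sum running over all such sparse $S\subseteq\{3,\ldots,i\}$ (the upper limit $\lfloor (i-1)/2\rfloor$ being the largest possible size of such an $S$). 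The recursion is then the standard \emph{largest-element-in-or-out} decomposition: a sparse $S\subseteq\{3,\ldots,m\}$ either omits $m$, contributing $G_{m-1}(\bm\theta)$, or contains $m$, in which case sparsity forces $m-1\notin S$ and $S\setminus\{m\}$ is a sparse subset of $\{3,\ldots,m-2\}$, so these contribute $\tfrac{\theta_m}{m-1}G_{m-2}(\bm\theta)$; summing the two cases gives the desired identity for $m\geq 3$, while the instance $m=2$ is $G_2 = G_1 + \theta_2 G_0 = 1$, which holds by the convention $G_0 = 0$.

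I expect the only genuinely creative step to be recognizing $G_i(\bm\theta)$ as this weighted count of sparse subsets; once that identification is made, both the algebraic reduction of (\ref{rec2}) and the combinatorial recursion are routine. The point demanding the most care is the boundary convention $G_0 = 0$: the empty-product reading of the sum would suggest the value $1$, but it is $G_0 = 0$ that is forced by $\gamma_1 = 0$, and this value must be used consistently in the $m=2$ instance of the $G$-recursion rather than the combinatorial decomposition, which applies only for $m\geq 3$.
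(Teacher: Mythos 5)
Your proof is correct and follows essentially the same route as the paper's: both verify that the candidate expression $G_{i-1}(\bm\theta)(i-1)!/\bm\theta_{<i>}$ satisfies the recursion (\ref{rec2}) with the right initial values, the heart of the matter being the three-term identity $G_m(\bm\theta)=G_{m-1}(\bm\theta)+\tfrac{\theta_m}{m-1}G_{m-2}(\bm\theta)$. The only difference is that the paper asserts this identity ``follows from the definition of $G_i(\bm\theta)$'' without detail, whereas you supply the justification via the sparse-subset, largest-element-in-or-out decomposition (including the correct caveat that the $m=2$ instance rests on the convention $G_0=0$ rather than on the combinatorics).
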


\begin{proof}
Note that $G_0(\bm\theta)=\gamma_1(\bm\theta)=0$ and $G_1(\bm\theta)/\bm\theta_{<2>}=1/(1+\theta_2)=\gamma_2(\bm\theta)$.  For any $i\geq 3$,  
\begin{eqnarray*}
\lefteqn{\frac{i-1}{i-1+\theta_{i}}\left(\frac{(i-2)!}{\bm\theta_{<i-1>}}G_{i-2}(\bm\theta)+\frac{\theta_{i-1}}{i-2+\theta_{i-1}}\frac{(i-3)!}{\bm\theta_{<i-2>}}G_{i-3}(\bm\theta)\right)}\\[4pt]
&=&\frac{(i-1)!}{\bm\theta_{<i>}}\left(G_{i-2}(\bm\theta)+\frac{\theta_{i-1}}{i-2} G_{i-3}(\bm\theta)\right)=\frac{(i-1)!}{\bm\theta_{<i>}}G_{i-1}(\bm\theta),
\end{eqnarray*}
 where the last equality follows from the definition of $G_i(\bm\theta)$. Hence for any $i$,  $G_{i-1}(\bm\theta)(i-1)!/\bm\theta_{<i>}$ satisfies (\ref{rec2}),  hence the result.
\end{proof}

Now we are ready to state the main theorem of this section as follows. 

\begin{theorem}\label{irvgeneral}
For any $4\leq n\in \N$,  the following are equivalent.
\begin{itemize}
\item[(i)] For any $(a_n,\cdots,a_1)\in \Delta_n$, $$
\p_{\mathbf{p}}(X_n^n=a_n,...,X_1^n=a_1)=\p_{\bm\theta}(Y_n=a_n,...,Y_1=a_1|Y^n\in\Delta_n).$$
\item[(ii)] $
\displaystyle p_i=\frac{(i-1+\theta_i)\gamma_i(\bm\theta)}{(i-1+\theta_i)\gamma_i(\bm\theta)+\theta_i\gamma_{i-1}(\bm\theta)}=\frac{G_{i-1}(\bm\theta)}{G_i(\bm\theta)},
$ for $i=3,...,n-1$.
\item[(iii)] $
\displaystyle \theta_i=\frac{(i-1) q_{i}}{p_{i}p_{i-1}},
$ for $i=3,...,n-1$.
\end{itemize}
\end{theorem}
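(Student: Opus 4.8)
The plan is to prove the chain of equivalences $(i)\Leftrightarrow(ii)\Leftrightarrow(iii)$, treating $(i)\Leftrightarrow(ii)$ as a comparison of two Markov chains on $\Delta_n$ and $(ii)\Leftrightarrow(iii)$ as algebra driven by the recursion for $G_i(\bm\theta)$ extracted from Proposition~\ref{prop:gammai}. First I would read off the law of $X^{n,\mathbf p}$ on $\Delta_n$ from the transition matrices: for $(a_n,\dots,a_1)\in\Delta_n$ (so $a_1=1$, $a_n=0$, no $11$) the boundary factors $P_n^{n,\mathbf p}(1,a_n)$ and $P_1^{n,\mathbf p}(a_2,a_1)$ equal $1$, every step out of a $1$ contributes a factor $1$, and each step $a_{r+1}=0\to a_r$ contributes $p_r$ or $q_r$. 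Thus $X^{n,\mathbf p}$ is the Markov chain on $\Delta_n$, read from index $n$ down to $1$, with kernel $P_r^{n,\mathbf p}$.

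For the right-hand side I would argue that $(Y^{n}\mid Y^n\in\Delta_n)$ is itself such a high-to-low Markov chain and compute its kernel. Writing $\beta_r=\p_{\bm\theta}(Y_r=1)=\theta_r/(r-1+\theta_r)$, the key point is that the indicator of $\Delta_n$ factorizes over nearest neighbours,
\[
\mathbbm 1_{\Delta_n}(y)=\mathbbm 1(y_1=1)\,\mathbbm 1(y_n=0)\prod_{j=1}^{n-1}\mathbbm 1(y_jy_{j+1}=0),
\]
which splits, for each $r$, into a factor in $(y_1,\dots,y_r)$, the bridge $\mathbbm 1(y_ry_{r+1}=0)$, and a factor in $(y_{r+1},\dots,y_n)$. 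Hence under $\p_{\bm\theta}(\,\cdot\mid\Delta_n)$ the past $(Y_r,\dots,Y_1)$ and the future $(Y_{r+2},\dots,Y_n)$ are conditionally independent given $Y_{r+1}$, so the conditioned sequence is Markov from high to low index. Summing the lower coordinates via the recursion (\ref{rec2}) shows that $Y_{r+1}=1$ forces $Y_r=0$, while for $Y_{r+1}=0$,
\[
\p_{\bm\theta}(Y_r=0\mid Y_{r+1}=0,\ \text{lower consistent})=\frac{\gamma_r(\bm\theta)}{\gamma_r(\bm\theta)+\beta_r\gamma_{r-1}(\bm\theta)},
\]
since the $Y_r=0$ event contributes $\gamma_r(\bm\theta)$ and the $Y_r=1$ alternative contributes $\beta_r\gamma_{r-1}(\bm\theta)$. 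Matching this entry with $P_r^{n,\mathbf p}(0,0)=p_r$ and clearing the factor $r-1+\theta_r$ gives exactly the first expression for $p_r$ in $(ii)$. As two Markov chains on $\Delta_n$ with the same boundary agree iff their kernels agree, and the boundary entries ($r=1,n$) and the case $r=2$ (where $\gamma_1=0$ forces both chains to set $X_2=Y_2=0$) match automatically, $(i)$ holds iff this identity holds for $r=3,\dots,n-1$; that is, $(i)\Leftrightarrow(ii)$.

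To reach the $G$-form of $(ii)$ and then $(iii)$, I would substitute $\gamma_i(\bm\theta)=G_{i-1}(\bm\theta)(i-1)!/\bm\theta_{<i>}$ from Proposition~\ref{prop:gammai}, using $\bm\theta_{<i>}=\bm\theta_{<i-1>}(\theta_i+i-1)$; then $(\theta_i+i-1)\gamma_i$ collapses to $G_{i-1}(i-1)!/\bm\theta_{<i-1>}$ and, with the recursion $G_i=G_{i-1}+\tfrac{\theta_i}{i-1}G_{i-2}$ (satisfied by the combinatorial definition of $G_i$, cf. the proof of Proposition~\ref{prop:gammai}), one gets $p_i=G_{i-1}(\bm\theta)/G_i(\bm\theta)$. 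For $(ii)\Rightarrow(iii)$ I would use $q_i=\tfrac{\theta_i}{i-1}G_{i-2}/G_i$ and $p_{i-1}=G_{i-2}/G_{i-1}$, so that $(i-1)q_i/(p_ip_{i-1})$ telescopes to $\theta_i$. For the converse $(iii)\Rightarrow(ii)$ I would run the recursion backwards: defining $\widehat G_1=\widehat G_2=1$ and $\widehat G_i=\widehat G_{i-1}/p_i$, the hypothesis $(iii)$ is precisely the statement $\widehat G_i=\widehat G_{i-1}+\tfrac{\theta_i}{i-1}\widehat G_{i-2}$, whence $\widehat G_i=G_i$ by induction and $p_i=\widehat G_{i-1}/\widehat G_i=G_{i-1}/G_i$.

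The main obstacle I expect is the second paragraph: making the Markov property of the conditioned chain and the kernel computation fully rigorous while tracking the boundary indices. One must check that fixing $Y_{r+1}$ reduces the conditioning to exactly $\{Y_1=1,\ \text{no }11\text{ among }Y_1,\dots,Y_r\}$ (with no dependence on the future coordinates), that the normalizing sums equal $\gamma_r(\bm\theta)$ and $\gamma_r(\bm\theta)+\beta_r\gamma_{r-1}(\bm\theta)$ through (\ref{rec2}), and that the endpoints $r\in\{1,2,n\}$ impose no constraint on $\bm\theta$, so $(ii)$ legitimately ranges only over $i=3,\dots,n-1$; the remaining algebra is routine once the $G$-recursion is available.
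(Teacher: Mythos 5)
Your proposal is correct, and it reorganizes the argument in two places where the paper proceeds differently. For $(i)\Leftrightarrow(ii)$, the paper does not invoke the general principle that conditioning an independent sequence on an event whose indicator factorizes over nearest neighbours yields a Markov chain. Instead it proves $(i)\Rightarrow(ii)$ by evaluating $p_i=\p_{\mathbf p}(X_i=0\mid X_{i+1}=0)$ against an explicit pattern $(r_n,\dots,r_{i+2},0,1)$, and proves $(ii)\Rightarrow(i)$ by induction, splitting a pattern $(a_n,\dots,a_1)\in\Delta_n$ at the largest index $j$ with $a_{j+1}=1$ and matching the resulting products factor by factor via the recursion (\ref{rec2}). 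Your kernel-matching argument handles both directions at once and is more conceptual; its cost is exactly the care you flag in your last paragraph (rigorous Markov property of the conditioned chain, reachability of the state $0$ so that equality of laws forces equality of kernels, and the automatic matching at $r\in\{1,2,n\}$), all of which does go through: your computation $\p_{\bm\theta}(Y_r=0\mid Y_{r+1}=0,\ \Delta_n)=\gamma_r(\bm\theta)/\bigl(\gamma_r(\bm\theta)+\beta_r\gamma_{r-1}(\bm\theta)\bigr)$ is precisely the ratio the paper computes, and $\gamma_1=0$ indeed neutralizes the step $r=2$. For $(ii)\Leftrightarrow(iii)$, the paper works at the level of the $\gamma_i$: it derives the formulas (\ref{pq}) to get $(ii)\Rightarrow(iii)$, and for $(iii)\Rightarrow(ii)$ it establishes the product formula (\ref{gammai}) for $\gamma_i(\bm\theta)$ by checking that its right-hand side satisfies (\ref{rec2}). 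You instead stay at the level of the polynomials $G_i$, using the recursion $G_i=G_{i-1}+\tfrac{\theta_i}{i-1}G_{i-2}$ (which is indeed available from the proof of Proposition~\ref{prop:gammai}) together with the auxiliary sequence $\widehat G_i=\widehat G_{i-1}/p_i$; the telescoping for $(ii)\Rightarrow(iii)$ and the induction $\widehat G_i=G_i$ for $(iii)\Rightarrow(ii)$ are each a couple of lines, so your algebra is tighter. What the paper's longer detour buys is the identity (\ref{gammai}) itself, which is reused later (in Corollary~\ref{PGC}, in Proposition~\ref{jointdrbnX}, and in the remark following Theorem~\ref{Y-limit-positive}); your route proves the theorem without producing that byproduct.
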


\begin{proof}
First note that, from (\ref{rec2}) and Proposition~\ref{prop:gammai},  the last equality in $(ii)$ holds for $i\geq 3$.
The same lines of argument as those in~\cite{sjt2021} proves $(i)\Rightarrow(ii)$. More precisely, suppose $(i)$ holds, then for $(r_n,\dots,r_{i+2},0,1)\in\Delta_{n-i+1}$
\begin{eqnarray*}
\lefteqn{p_i=\p_{\mathbf{p}}(X_i=0\mid X_{i+1}=0)=\p_{\mathbf{p}}(X_i= X_{i+1}=0)/\p_{\mathbf{p}}(X_{i+1}=0)}\\[6pt]
&=&\frac{\gamma_n^{-1}(\bm\theta)\p_{\bm\theta}(Y_n=r_n,\dots,Y_{i+2}=r_{i+2},Y_{i+1}=0)\gamma_i(\bm\theta)}{\gamma_n^{-1}(\bm\theta)\p_{\bm\theta}(Y_n=r_n,\dots,Y_{i+2}=r_{i+2},Y_{i+1}=0)(\gamma_i(\bm\theta)+\p(Y_i=1)\gamma_{i-1}(\bm\theta))}\\[6pt]
&=&\frac{(i-1+\theta_i)\gamma_i(\bm\theta)}{(i-1+\theta_i)\gamma_i(\bm\theta)+\theta_i\gamma_{i-1}(\bm\theta)}.
\end{eqnarray*}

To show  $(ii)\Rightarrow(i)$, for $n\in \N$,  suppose that
\begin{equation*}
\begin{split}
    \p(Y_m=a_m,...,Y_1=a_1|Y^m\in\Delta_m)=& \p(X_m^m=a_m,...,X_1^m=a_1)\\
    =&\p(X_m^n=a_m,...,X_1^n=a_1\mid X_{m+1}^n=1)
\end{split}
\end{equation*}
holds for any $(a_m,\cdots,a_1)\in \Delta_m$ and $m<n$.  Let $(a_n,\cdots,a_1)\in \Delta_n$ be such that $\prod_{i=2}^n (1-a_i)=0$, which means there exists at least one index $2<i<n$ s.t. $a_i=1$. Let $1<j<n-1$ be the largest index for which $a_{j+1}=1$. Then $(a_j,\ldots,a_1)\in \Delta_{j}$ and
\begin{eqnarray*}
\lefteqn{
\p(X_n^n=a_n,...,X_1^n=a_1)}\\
&=& \p(X_{j}^n=0,X_{j-1}^n=a_{j-1},...,X_1^n=a_1\mid X_{j+1}^n=1)\p(X_{n}^n=a_n,..., X_{j+1}^n=1) \\
&=&\frac{\p(Y_j=0,Y_{j-1}=a_{j-1},...,Y_1=a_1)}{\gamma_j(\bm\theta)}q_{j+1}\prod_{i=j+2}^{n-1}p_i\\
&=&  \frac{\p(Y_j=0,Y_{j-1}=a_{j-1},...,Y_1=a_1)}{\gamma_j(\bm\theta)}\\
&& \times\frac{\p(Y_{j+2}=0,Y_{j+1}=1)\gamma_j(\bm\theta)}{\gamma_{j+2}(\bm\theta)}\prod_{i=j+2}^{n-1}\frac{\p(Y_{i+1}=0)\gamma_i(\bm\theta)}{\gamma_{i+1}(\bm\theta)}\\
&=& \p(Y_n=a_n,...,Y_1=a_1|Y^{n}\in\Delta_n),
\end{eqnarray*}
where in the last-but-one line we used (\ref{rec2}). In the case that $a_i=0$,  for $2\leq i \leq n$,  we have 
\begin{eqnarray*}
\p(X_n^n=a_n,...,X_1^n=a_1)&=&\prod_{i=2}^{n-1}p_i = \prod_{i=2}^{n-1}\frac{\p(Y_{i+1}=0)\gamma_i(\bm\theta)}{\gamma_{i+1}(\bm\theta)}\\
&=& \p(Y_n=a_n,...,Y_1=a_1|Y^{n}\in\Delta_n),
\end{eqnarray*}
since $\gamma_2=\p(Y_2=0)$ and $\p(Y_1=1)=1$, hence $(ii)\Rightarrow (i)$ for $n\in \N$.

Note that $(iii)$ is straightforward from $(ii)$. More precisely, using (\ref{rec2}) and $(ii)$, for $i=3,\cdots, n-1$, we get
\begin{equation}\label{pq}
    p_i=\frac{i\gamma_i}{(i+\theta_{i+1})\gamma_{i+1}}, \ \ q_i=\frac{i\theta_i\gamma_{i-1}}{(i+\theta_{i+1})(i-1+\theta_i)\gamma_{i+1}}, 
\end{equation}
where substituting these into $(i-1)q_i/(p_ip_{i-1})$ leads to the equation in $(iii)$.
To prove $(iii)\Rightarrow (ii)$, we first show if $(iii)$ holds, we have
\begin{equation}\label{gammai}
\gamma_i(\bm\theta)=\frac{p_i}{1+\theta_2}\prod\limits_{j=2}^{i-1}\frac{p_j}{p_j p_{j+1}+q_{j+1}},
\end{equation}
for $i=3,...,n-1$. To establish this, denote, temporarily, by $\tilde \gamma_i(\bm\theta)$ the r.h.s. of (\ref{gammai}). We show the $\tilde\gamma_i(\bm\theta)$ satisfies (\ref{rec2}), hence $\tilde\gamma_i(\bm\theta)=\gamma_i(\bm\theta)$. It is easy to see $\tilde\gamma_i(\bm\theta)=\gamma_i(\bm\theta)$, for $i=3,4$, just from the definition, so the calculation is omitted here. Now for $i\geq 4$, substituting $\theta_i=(i-1)q_i/(p_ip_{i-1})$ and $\theta_{i+1}=i q_{i+1}/(p_{i+1}p_{i})$, and simplifying, we get
\begin{equation*}
\begin{split}
\frac{i}{i+\theta_{i+1}}(\tilde\gamma_i(\bm\theta)+&\frac{\theta_i}{i-1+\theta_i}\tilde\gamma_{i-1}(\bm\theta))=\frac{p_{i+1}p_i}{p_{i+1}p_i+q_{i+1}}(\tilde\gamma_i(\bm\theta)+\frac{q_i}{p_ip_{i-1}+q_i}\tilde\gamma_{i-1}(\bm\theta))\\
&=\frac{p_{i+1}p_i}{p_{i+1}p_i+q_{i+1}} \tilde\gamma_{i-1}(\bm\theta)\left(\frac{p_i}{p_{i-1}}.\frac{p_{i-1}}{p_{i-1}p_i+q_i}+\frac{q_i}{p_i p_{i-1}+q_i}\right)\\
&=\frac{p_{i+1}p_i}{(p_{i+1}p_i+q_{i+1})(p_i p_{i-1}+q_i)}\tilde\gamma_{i-1}(\bm\theta)=\tilde\gamma_{i+1}(\bm\theta), 
\end{split}
\end{equation*}
as claimed, hence $\tilde\gamma_{i}=\gamma_{i}$, for $i=3,\cdots,n-1$, therefore (\ref{gammai}) holds if we assume $(iii)$. Thus, applying (\ref{gammai}) and $\theta_i=(i-1)q_i/(p_ip_{i-1})$, the middle term in $(ii)$ simplifies to $p_i$, hence $(iii)\Rightarrow (ii)$.
\end{proof}
As discussed before, the last theorem gives a simple way to construct $Y^{n,\bm{\theta}}$ from $X^{n,\mathbf{p}}$, and vice versa. Note also that (\ref{gammai}) provides an interesting way to compute the $\gamma_i(\bm\theta)$, that does not involve usual inclusion-exclusion arguments for such quantities. 
We now apply Theorem~\ref{irvgeneral} to establish the main conditioning result for the playground game $\eta$. Denote by
$$
B(\tilde{z}_1,\tilde{z}_2)=\frac{\Gamma(\tilde z_1)\Gamma(\tilde z_2)}{\Gamma(\tilde z_1+\tilde z_2)}=\int_0^1 u^{\tilde{z}_1-1} (1-u)^{\tilde{z}_2-1}du,
$$
the Beta function for complex variables $\tilde{z}_1, \tilde{z}_2$ with $Re(\tilde{z}_1),Re(\tilde{z}_2)>0$.
\begin{corollary}\label{PGC}
For given $0<\theta_1^*,\theta_2^*\leq 1$, there exists a unique sequence of independent Bernoulli $\{0,1\}$-valued random variables $\xi:=(\xi_i)_{i=1}^\infty$, with
\begin{equation*}\label{probxit}
\p_\theta(\xi_i=0)=\frac{i-1}{i-1+\theta_i^*},
\end{equation*}
where $\theta_i^*= \theta(\frac{\theta}{i-2}+1)$, for $i\geq 4$, and $\theta_3^*=\theta$, such that for any $4\leq n\in \N$ and  $(a_n,...,a_1)\in\Delta_n$, letting $\xi^n=(\xi_n,\cdots,\xi_1)$, we have
\begin{equation}\label{condlawplayground}
\p_\theta(\eta_n^n=a_n,...,\eta_1^n=a_1)=\p_\theta(\xi_n=a_n,...,\xi_1=a_1|\xi^{n}\in\Delta_n).
\end{equation}
In addition, for $3\leq n\in \N$
\begin{equation}\label{deltai}
\delta_n(\theta):=\p_\theta(\xi^n\in\Delta_n)=\frac{(n-1)!(\theta^2+\theta+2) (\theta+2)_{(n-3)}}{(1+\theta^*_2)(\theta+2)\prod\limits_{k=1}^{n-2}(k(k+1)+\theta(\theta+k))},
\end{equation}
while $\delta_1=0$ and $\delta_2=\p(\xi_2=0)=1/(1+\theta^*_2)$
. Also,

$$
\lim_{n\to \infty}\delta_n(\theta)=\frac{\theta^2+\theta+2}{1+\theta^*_2} B(z_1(\theta),z_2(\theta)),
$$
where $z_1(\theta)=\frac{3+\theta-\sqrt{(1-\theta)(1+3\theta)}}{2}$, $z_2(\theta)=\frac{3+\theta+\sqrt{(1-\theta)(1+3\theta)}}{2}$.

\end{corollary}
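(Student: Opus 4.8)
The plan is to specialize Theorem~\ref{irvgeneral} and the identity~(\ref{gammai}) to the playground game, where $p_2=1$ and $p_i=(i-1)/(\theta+i-1)$, $q_i=\theta/(\theta+i-1)$ for $i\geq 3$.

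\emph{Existence, uniqueness, and the conditional relation.} By Theorem~\ref{irvgeneral}, the identity~(\ref{condlawplayground}) holds for every $n$ exactly when $\theta_i=(i-1)q_i/(p_ip_{i-1})$ for $i\geq 3$. Substituting the $\eta$-values and simplifying gives, for $i\geq 4$,
\[
\theta_i=\frac{(i-1)\,\theta/(\theta+i-1)}{\frac{i-1}{\theta+i-1}\cdot\frac{i-2}{\theta+i-2}}=\frac{\theta(\theta+i-2)}{i-2}=\theta\Bigl(\frac{\theta}{i-2}+1\Bigr),
\]
while the case $i=3$ (where $p_2=1$) yields $\theta_3=2q_3/p_3=\theta$. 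These are precisely the $\theta_i^*$ in the statement, so setting $\theta_i=\theta_i^*$ forces~(\ref{condlawplayground}) via $(iii)\Rightarrow(i)$, and conversely $(iii)$ shows no other choice is compatible with it. The parameters $\theta_1^*$ and $\theta_2^*$ remain free: every $(a_n,\dots,a_1)\in\Delta_n$ has $a_1=1$ and, by the no-consecutive-$1$s constraint in~(\ref{Deltaj}), also $a_2=0$; hence the factors $\p(\xi_1=1)=1$ and $\p(\xi_2=0)=1/(1+\theta_2^*)$ are common to numerator and denominator of the conditional law and cancel. Fixing $\theta_1^*,\theta_2^*\in(0,1]$ thus determines $\xi$ uniquely without affecting~(\ref{condlawplayground}).

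\emph{The closed form~(\ref{deltai}).} Since the $\theta_i^*$ satisfy condition $(iii)$ for the $\eta$-process values of $p$ and $q$, the identity~(\ref{gammai}) derived in the proof of Theorem~\ref{irvgeneral} holds for all $i\geq 3$, in particular for $i=n$, giving $\delta_n=\gamma_n(\bm\theta^*)=\frac{p_n}{1+\theta_2^*}\prod_{j=2}^{n-1}\frac{p_j}{p_jp_{j+1}+q_{j+1}}$. The $j=2$ factor equals $1$, since $p_2p_3+q_3=(2+\theta)/(\theta+2)=1$, and for $j\geq 3$ one finds
\[
p_jp_{j+1}+q_{j+1}=\frac{j(j-1)+\theta(\theta+j-1)}{(\theta+j-1)(\theta+j)},\qquad \frac{p_j}{p_jp_{j+1}+q_{j+1}}=\frac{(j-1)(\theta+j)}{j(j-1)+\theta(\theta+j-1)}.
\]
Writing the denominator as $k(k+1)+\theta(\theta+k)$ with $k=j-1$, the product over $j$ telescopes into factorials and Pochhammer symbols; substituting $p_n=(n-1)/(\theta+n-1)$ and using $(\theta+3)_{(n-3)}/(\theta+n-1)=(\theta+2)_{(n-3)}/(\theta+2)$ together with the $k=1$ factor $k(k+1)+\theta(\theta+k)=\theta^2+\theta+2$ reassembles~(\ref{deltai}). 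The values $\delta_1=0$ and $\delta_2=1/(1+\theta_2^*)$ are just the initial data $\gamma_1=0$, $\gamma_2=\p(\xi_2=0)$ of~(\ref{rec2}). Equivalently, one may verify by induction that the right side of~(\ref{deltai}) satisfies~(\ref{rec2}) with $\theta_i=\theta_i^*$. This bookkeeping is the most error-prone step, because the exceptional values $p_2=1$ and $\theta_3^*=\theta$ break the generic pattern at the lower end.

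\emph{The limit.} The quadratic $k(k+1)+\theta(\theta+k)=k^2+(1+\theta)k+\theta^2$ has discriminant $(1+\theta)^2-4\theta^2=(1-\theta)(1+3\theta)$ and factors as $(k+z_1-1)(k+z_2-1)$, with $z_1,z_2$ as stated and $z_1+z_2=3+\theta$. Hence
\[
\prod_{k=1}^{n-2}\bigl(k(k+1)+\theta(\theta+k)\bigr)=\frac{\Gamma(z_1+n-2)\,\Gamma(z_2+n-2)}{\Gamma(z_1)\,\Gamma(z_2)},
\]
and writing $(n-1)!=\Gamma(n)$, $(\theta+2)_{(n-3)}=\Gamma(\theta+n-1)/\Gamma(\theta+2)$ turns~(\ref{deltai}) into a constant times $\Gamma(n)\Gamma(\theta+n-1)/[\Gamma(z_1+n-2)\Gamma(z_2+n-2)]$. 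By $\Gamma(n+a)/\Gamma(n+b)\sim n^{a-b}$, this ratio behaves like $n^{(2-z_1)+(\theta+1-z_2)}=n^{3+\theta-z_1-z_2}=n^0$ and tends to $1$, precisely because $z_1+z_2=3+\theta$. Finally $\Gamma(z_1)\Gamma(z_2)=B(z_1,z_2)\Gamma(z_1+z_2)=B(z_1,z_2)\,(\theta+2)\Gamma(\theta+2)$, and collecting the surviving constants collapses the expression to $\frac{\theta^2+\theta+2}{1+\theta_2^*}B(z_1(\theta),z_2(\theta))$. The only delicate point is tracking the powers of $n$ to confirm the cancellation to $n^0$; convergence to the Beta function then follows from $z_1+z_2=3+\theta$.
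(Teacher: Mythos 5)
Your proposal is correct and follows essentially the same route as the paper's proof: invoke Theorem~\ref{irvgeneral}(iii) to pin down $\theta_i^*$, substitute into~(\ref{gammai}) to obtain~(\ref{deltai}), then factor $k(k+1)+\theta(\theta+k)=(k-1+z_1)(k-1+z_2)$ and use $z_1+z_2=\theta+3$ with the Gamma-ratio asymptotics $\Gamma(n+a)/\Gamma(n+b)\sim n^{a-b}$ for the limit. The only difference is that you spell out details the paper leaves implicit (the telescoping of the product, and why $\theta_1^*,\theta_2^*$ remain free because the factors $\p(\xi_1=1)=1$ and $\p(\xi_2=0)$ cancel in the conditional law), which is a welcome clarification but not a different argument.
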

\begin{proof}
First note that from Theorem~\ref{irvgeneral}, for any $n\geq 4$, (\ref{condlawplayground}) holds if and only if $\theta^*_i=(i-1)q_i/(p_ip_{i-1})$, for $3\leq i\leq n-1$, which letting $q_i=1-p_i=\theta/(\theta+i-1)$ and $p_2=1$, simplifies to the values of $\theta^*_i$ given in the statement of the corollary, and results in (\ref{deltai}), if we substitute them into (\ref{gammai}) for $i\geq 3$. Note that $\delta_1(\theta)=0, \delta_2(\theta)=1/(1+\theta^*_2)$ are obvious from the definition.

For the limit, as $k(k+1)+\theta(\theta+k)=(k-1+z_1(\theta))(k-1+z_2(\theta))$ and $z_1(\theta)+z_2(\theta)=\theta+3$, writing $x_{(m)}=\Gamma(x+m)/\Gamma(x)$ concludes
\begin{equation*}
\begin{split}
\delta_n(\theta)=&\frac{(\theta^2+\theta+2)\Gamma(n)(\theta+3)_{(n-4)}}{(1+\theta^*_2)(z_1(\theta))_{(n-2)}(z_2(\theta))_{(n-2)}}\\
=&\frac{(\theta^2+\theta+2)B(z_1(\theta),z_2(\theta))}{1+\theta^*_2}\cdot\frac{\Gamma(n)\Gamma(n+\theta-1)}{\Gamma(n+z_1(\theta)-2)\Gamma(n+z_2(\theta)-2)},
\end{split}
\end{equation*}
where for large $n$, the term
\[
\frac{\Gamma(n)\Gamma(n+\theta-1)}{\Gamma(n+z_1(\theta)-2)\Gamma(n+z_2(\theta)-2)}\sim n^{2-z_1(\theta)}n^{\theta-1+2-z_2(\theta)}=n^{\theta+3-z_1(\theta)-z_2(\theta)}=1,
\]
as, once again, $z_1(\theta)+z_2(\theta)=\theta+3$, completing the proof.
\end{proof}

 
\subsection{Number of cycles revisited}\label{sec:cyclesRevisited}

For the classical Feller coupling $\tilde\xi_n^\theta,\cdots,\tilde\xi_1^\theta$, the probability generating function (pgf) of the number of cycles $K_n^{\tilde\xi}$ is given by
$$
\e_\theta(s^{K_n^{\tilde\xi}})=\frac{(\theta s)_{(n)}}{\theta_{(n)}}.
$$
The above pgf indeed has an interesting relation with the pgf of the number of cycles $K_n^{\tilde\eta}$ of the $\tilde\eta$ process~\cite{sjt2021}, that is

$$
\e_\theta(s^{K_n^{\tilde\eta}})=\frac{\lambda_n(\theta s)}{\lambda_n(\theta)}\e_\theta(s^{K_n^{\tilde\xi}}),
$$
where $\lambda_n(\theta)=\p_\theta((\tilde\xi_n,\cdots,\tilde\xi_1) \in \Delta_n)$ represents the probability that a $\theta$-biased random permutation of size $n$ is a derangement.   A similar relation holds for the total number of cycles of $X^{n,\mathbf{p}}$ and $Y^{n,\bm{\theta}}$.  To see this, denote by $K_n=K_n^{\mathbf{p}}$ and $\tilde K_n=\tilde K_n^{\bm{\theta}}$ the total number of cycles of $X^{n,\mathbf{p}}$ and $Y^{n,\bm{\theta}}$. We have
\begin{equation}\label{probKY}
\p_{\bm{\theta}}(\tilde K_n=k)=\sum\limits_{1=i_1<\cdots<i_k\leq n}\frac{(n-1)!\theta_{i_1}\cdots\theta_{i_k}}{(i_2-1)\cdots(i_k-1)\bm\theta_{<n>}},
\end{equation}
where $\p_\theta(\tilde K_n=1)=\theta_1(n-1)!/\theta_{<n>}$. Note that $\tilde K_n$ has the Poisson-binomial distribution.  The pgf of $\tilde K_n$ is given by
\begin{eqnarray*}
\e_{\bm{\theta}}(s^{\tilde K_n})&=& \sum\limits_{k=1}^n s^k \p_{\bm{\theta}}(\tilde K_n=k)\\
&=& \sum\limits_{k=1}^n s^k \sum\limits_{1=i_1<\cdots<i_k\leq n}\frac{(n-1)!\theta_{i_1}\cdots\theta_{i_k}}{(i_2-1)\cdots(i_k-1)\bm\theta_{<n>}}\\
&=& \frac{(s\bm\theta)_{<n>}}{\bm\theta_{<n>}}\sum\limits_{k=1}^n  \sum\limits_{i_1<\cdots<i_k}\frac{(n-1)!(s\theta_{i_1})\cdots(s\theta_{i_k})}{(i_2-1)\cdots(i_k-1)(s\bm\theta)_{<n>}}\\
&=& \frac{(s\bm\theta)_{<n>}}{\bm\theta_{<n>}}.
\end{eqnarray*}
Similarly,  for the number $K_n$ of cycles of $X^{n,\mathbf p}$ we have
\begin{equation}\label{probKX}
\p_{\mathbf{p}}(K_n=k)=\sum_{
\substack{1=i_1<\cdots<i_k< n \\ i_j+1<i_{j+1}}
}\frac{(n-1)!\theta_{i_1}\cdots\theta_{i_k}}{(i_2-1)\cdots(i_k-1)\gamma_n(\bm\theta)\bm\theta_{<n>}},
\end{equation}

Therefore, we obtain the following relation for the probability generating functions of $\tilde K_n$ and $K_n$.
\begin{theorem}
Suppose $
\displaystyle \theta_i=(i-1) q_{i}/(p_{i}p_{i-1}),$ for $i=3,...,n-1$. Then
$$
\e_{\mathbf{p}}(s^{K_n}) = \frac{\gamma_n(s\bm\theta)}{\gamma_n(\bm\theta)} \e_{\bm{\theta}}(s^{\tilde K_n}).
$$

\end{theorem}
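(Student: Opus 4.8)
The plan is to expand $\e_{\mathbf{p}}(s^{K_n})$ directly from the explicit mass function (\ref{probKX}) and then to recognise the resulting weighted sum as a normalising constant for the very same formula, but evaluated at the rescaled parameter vector $s\bm\theta=(s\theta_i)_i$. Using (\ref{probKX}) and absorbing the factor $s^k$ into the $k$ weights $\theta_{i_1},\dots,\theta_{i_k}$, I would first write
\[
\e_{\mathbf{p}}(s^{K_n})=\sum_{k\geq 1}s^k\,\p_{\mathbf{p}}(K_n=k)
=\frac{1}{\gamma_n(\bm\theta)\,\bm\theta_{<n>}}\sum_{k\geq 1}\ \sum_{\substack{1=i_1<\cdots<i_k<n\\ i_j+1<i_{j+1}}}\frac{(n-1)!\,(s\theta_{i_1})\cdots(s\theta_{i_k})}{(i_2-1)\cdots(i_k-1)}.
\]
The hypothesis $\theta_i=(i-1)q_i/(p_ip_{i-1})$ is precisely what makes Theorem~\ref{irvgeneral} applicable, so that (\ref{probKX}) indeed governs the number of cycles of $X^{n,\mathbf{p}}$ under these $\bm\theta$.

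The key observation is that the inner double sum is exactly the combinatorial kernel of (\ref{probKX}) with $\bm\theta$ replaced by $s\bm\theta$. Since, for each fixed $s>0$, the vector $s\bm\theta$ is a legitimate GFC parameter and (\ref{probKX}) then defines a genuine probability mass function summing to $1$, rearranging that normalisation yields
\[
\sum_{k\geq 1}\ \sum_{\substack{1=i_1<\cdots<i_k<n\\ i_j+1<i_{j+1}}}\frac{(n-1)!\,(s\theta_{i_1})\cdots(s\theta_{i_k})}{(i_2-1)\cdots(i_k-1)}=\gamma_n(s\bm\theta)\,(s\bm\theta)_{<n>}.
\]
Substituting this and recalling the generating function $\e_{\bm\theta}(s^{\tilde K_n})=(s\bm\theta)_{<n>}/\bm\theta_{<n>}$ computed above gives
\[
\e_{\mathbf{p}}(s^{K_n})=\frac{\gamma_n(s\bm\theta)\,(s\bm\theta)_{<n>}}{\gamma_n(\bm\theta)\,\bm\theta_{<n>}}=\frac{\gamma_n(s\bm\theta)}{\gamma_n(\bm\theta)}\,\e_{\bm\theta}(s^{\tilde K_n}),
\]
as claimed. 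Both sides are polynomials in $s$, so verifying the identity for $s>0$ suffices.

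A cleaner conceptual route, which I would present as the underlying mechanism, reads the same computation as an exponential tilting of the GFC. Under the hypothesis, Theorem~\ref{irvgeneral} gives $\mcL(X^{n,\mathbf{p}})=\mcL(Y^{n,\bm\theta}\mid Y^n\in\Delta_n)$, and since $K_n=\sum_{i=1}^n X_i$ and $\tilde K_n=\sum_{i=1}^n Y_i^{\bm\theta}$ count the $1$s, this yields $\e_{\mathbf{p}}(s^{K_n})=\gamma_n(\bm\theta)^{-1}\,\e_{\bm\theta}\!\big(s^{\tilde K_n}\mathbbm{1}(Y^n\in\Delta_n)\big)$. Exploiting independence, a per-coordinate factorisation shows that for every $a\in\{0,1\}^n$ one has $\p_{\bm\theta}(Y^n=a)\,s^{\sum_i a_i}=\e_{\bm\theta}(s^{\tilde K_n})\,\p_{s\bm\theta}(Y^n=a)$, the normalising product $\prod_{i=1}^n (i-1+s\theta_i)/(i-1+\theta_i)$ being exactly $(s\bm\theta)_{<n>}/\bm\theta_{<n>}=\e_{\bm\theta}(s^{\tilde K_n})$; summing over $a\in\Delta_n$ reproduces the displayed identity.

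The main point requiring care is the bookkeeping of the leading factor of $s$. Because $K_n,\tilde K_n\geq 1$, both generating functions vanish at $s=0$, and the necessary $s$ is supplied by the first factor $s\theta_1=s$ of $(s\bm\theta)_{<n>}$; one must therefore take $\gamma_n(s\bm\theta)$ to be the honest probability $\p_{s\bm\theta}(Y^n\in\Delta_n)$ (which, as is clear from the recurrence (\ref{rec2}) and its initial data, does not depend on $\theta_1$) rather than a naive substitution into the formula of Proposition~\ref{prop:gammai}. Getting this normalisation right is what ensures the factor of $s$ lands correctly and no spurious power of $s$ survives.
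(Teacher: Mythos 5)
Your proposal is correct, and its main computation is exactly the paper's proof: expand $\e_{\mathbf{p}}(s^{K_n})$ from (\ref{probKX}), absorb $s^k$ into the weights $\theta_{i_1}\cdots\theta_{i_k}$, recognize the resulting double sum as the normalising constant $\gamma_n(s\bm\theta)(s\bm\theta)_{<n>}$ of the same mass function at parameter $s\bm\theta$, and finish with $\e_{\bm\theta}(s^{\tilde K_n})=(s\bm\theta)_{<n>}/\bm\theta_{<n>}$. Your exponential-tilting reformulation is an equivalent repackaging of that same normalisation step rather than a genuinely different route, and your closing caution — that $\gamma_n(s\bm\theta)$ must be read as the probability $\p_{s\bm\theta}(Y^n\in\Delta_n)$ (which, by the recurrence (\ref{rec2}), does not depend on the first component of the parameter vector) rather than a literal substitution into the formula of Proposition~\ref{prop:gammai} derived under $\theta_1=1$ — is a correct and useful clarification of how the leading factor of $s$ is balanced on both sides.
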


\begin{proof}
As in the above discussion, we have
\begin{eqnarray*}
 \e_{\mathbf{p}}(s^{K_n})&=&\frac{\gamma_n(s\bm\theta)(s\bm\theta)_{<n>}}{\gamma_n(\bm\theta)\bm\theta_{<n>}}\sum_{k=1}^{\lfloor\frac{n}{2}\rfloor}\sum_{
\substack{1=i_1<\cdots<i_k< n \\ i_j+1<i_{j+1}}
}\frac{(n-1)!(s\theta_{i_1})\cdots(s\theta_{i_k})}{(i_2-1)\cdots(i_k-1)\gamma_n(s\bm\theta)(s\bm\theta)_{<n>}}\\[4pt]
&=&\frac{\gamma_n(s\bm\theta)(s\bm\theta)_{<n>}}{\gamma_n(\bm\theta)\bm\theta_{<n>}}=\frac{\gamma_n(s\bm\theta)}{\gamma_n(\bm\theta)} \e_{\bm{\theta}}(s^{\tilde K_n}).
\end{eqnarray*}
\end{proof}

\subsection{Cycle counts}\label{sec:cycleCounts}
In this section, we find the distribution of the cycle counts for the  GFC.  In fact, this is the counterpart of the Ewens sampling formula when $\theta$ is replaced by $\bm \theta=(\theta_i)_{i=1}^\infty$, hence more involved.  For a vector $r\in \{0,1\}^n$ with $r_1=1$,
\[\p_{\bm\theta}(Y^{n}=r)=\frac{(n-1)!}{\bm\theta_{<n>}}\theta_1\prod\limits_{1<i\leq n: r_i=1}\frac{\theta_{i}}{i-1}.\]
Letting $\prod_{i=1}^0x_i=1$, by convention, this is equivalent to
\begin{equation}\label{drnY}
\p_{\bm\theta}(Y^{n}=r)=\frac{(n-1)!}{\bm\theta_{<n>}}\theta_1\prod\limits_{i=1}^{\|r\|-1}\frac{\theta_{n+1-a_1-\cdots-a_i}}{n-a_1-\cdots-a_i},
\end{equation}
where $\|r\|$ is the number of cycles of $r$ and $a_i$ is the size of the $i$-th cycle (in the order of formation of the cycles).  In other words, $a_1=n+1-\max\{i\leq n; r_i=1\}$,  $a_2=n+1-a_1-\max\{i\leq n-a_1; r_i=1\}$, and so on.  


To compute $\p(\tilde C_1=c_1,\cdots,\tilde C_n=c_n)$ we sum  $\p_{\bm\theta}(Y^{n}=r)$  over all possible $r\in \{0,1\}^n$, $r_1=1$, which have the cycle type $(c_1,\cdots,c_n)$.  To this end,  for any $c=(c_1,...,c_n)\in \Z_+^n$ satisfying $\sum_{i=1}^nic_i=n$,  let $\|c\|=\sum_{i=1}^n c_i$. We define $\bar{c}=(\bar{c}_1,...,\bar{c}_{\|c\|})$ as follows. Let $i_1,...i_k$ be all numbers in $\{1,...,n\}$ such that $c_{i_j}\neq 0$. For $0<l \leq c_{i_1}$, let $\bar{c}_l=i_1$. Similarly, for any $m=2,...,k$, if $\sum_{j=1}^{m-1} c_{i_j} <l\leq \sum_{j=1}^{m} c_{i_j}$, let $\bar{c}_l=i_m$. For example, if $c=(2,0,1,0,4)$, then $\bar{c}=(1,1,3,5,5,5,5)$. Denote by $S_l$ the permutation group of size $l\in \N$.


\begin{proposition}\label{jointdrbnX}
For any $n\in \N$ and $c=(c_1,...,c_n)\in \Z_+^n$ with $\sum_{i=1}^n i c_i=n$,
\begin{equation}\label{jointDistY}
\p_{\bm\theta}(\tilde C_1(n)=c_1,\ldots,\tilde C_n(n)=c_n)=\frac{(n-1)!}{\bm\theta_{<n>}}\theta_1\prod\limits_{i=1}^n\frac{1}{c_i!}\sum\limits_{\sigma\in S_{\|c\|}}\prod\limits_{i=1}^{\|c\|-1} \frac{\theta_{\epsilon(i,c,\sigma)}}{\epsilon(i,c,\sigma)-1},
\end{equation}
where $\epsilon(i,c,\sigma)=\epsilon_n(i,c,\sigma):=n+1-\sum_{j=1}^i \bar{c}_{\sigma(j)}$, for $i=1,\cdots,\|c\|$. Furthermore, if $\theta_i=(i-1)q_i/(p_ip_{i-1}), \ i\geq 3$, then for any $2\leq n\in \N$ and any $c=(c_2,\cdots,c_n)$ with $\sum_{i=2}^n i c_i=n$, we have
\begin{equation}\label{jointDistX}
\begin{split}
\p_{\mathbf{p}}(C_2(n)=c_2,\ldots,C_n(n)=c_n)&=  \frac{(n-1)!\theta_1}{\gamma_n(\bm\theta)\bm\theta_{<n>}}\prod\limits_{i=2}^n\frac{1}{c_i!}\sum\limits_{\sigma\in S_{\|c\|}}\prod\limits_{i=1}^{\|c\|-1} \frac{\theta_{\epsilon(i,c,\sigma)}}{\epsilon(i,c,\sigma)-1} \\
&=  \frac{1}{c_n! }\prod_{i=2}^{n-1}\frac{p_i}{c_i!}\sum\limits_{\sigma\in S_{\|c\|}}\prod\limits_{i=1}^{\|c\|-1} \frac{q_{\epsilon(i,c,\sigma)}}{p_{\epsilon(i,c,\sigma)}p_{\epsilon(i,c,\sigma)-1}}.
\end{split}
\end{equation}
\end{proposition}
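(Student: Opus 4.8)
The plan is to obtain (\ref{jointDistY}) by summing the single-sequence weight (\ref{drnY}) over all $\{0,1\}$-sequences of the prescribed cycle type, and then to read off (\ref{jointDistX}) from the conditional relation of Theorem~\ref{irvgeneral}. The starting observation is that $\p_{\bm\theta}(Y^n=r)$ depends on $r$ only through the \emph{ordered} list of cycle sizes $(a_1,\dots,a_k)$, with $k=\|c\|$, that $r$ produces as its spacings; indeed $r\mapsto(a_1,\dots,a_k)$ is a bijection between the sequences $r$ with $r_1=1$ of cycle type $c$ and the orderings of the multiset containing $c_i$ copies of $i$. Writing $s_i=a_1+\cdots+a_i$ for the partial sums, (\ref{drnY}) reads $\p_{\bm\theta}(Y^n=r)=\frac{(n-1)!\theta_1}{\bm\theta_{<n>}}\prod_{i=1}^{k-1}\frac{\theta_{n+1-s_i}}{n-s_i}$, so the weight of $r$ is determined entirely by these partial sums.

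To sum over all such $r$ I would symmetrize over $S_{\|c\|}$. Fixing the reference listing $\bar c=(\bar c_1,\dots,\bar c_{\|c\|})$, each ordering of the multiset is represented by $(\bar c_{\sigma(1)},\dots,\bar c_{\sigma(\|c\|)})$ for exactly $\prod_i c_i!$ permutations $\sigma$ (the stabilizer of the value-classes), and for such $\sigma$ one has $\epsilon(i,c,\sigma)=n+1-\sum_{j\le i}\bar c_{\sigma(j)}=n+1-s_i$, so that $\frac{\theta_{\epsilon(i,c,\sigma)}}{\epsilon(i,c,\sigma)-1}=\frac{\theta_{n+1-s_i}}{n-s_i}$. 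Since this term depends only on the induced ordering, summing over all of $S_{\|c\|}$ overcounts each distinct ordering by the factor $\prod_i c_i!$, which is compensated exactly by the prefactor $\prod_i 1/c_i!$; this yields (\ref{jointDistY}). Getting this bookkeeping right --- and confirming that the summand is constant on each value-class orbit --- is the one genuinely delicate step.

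For (\ref{jointDistX}) I would first note that $\{Y^n\in\Delta_n\}=\{\tilde C_1(n)=0\}$: the prepended $1$ sits adjacent to $a_n$, so the absence of an initial $1$-spacing forces $a_n=0$, and the absence of consecutive $1$s among $a_n,\dots,a_1$ is precisely the absence of $1$-spacings. Under $\theta_i=(i-1)q_i/(p_ip_{i-1})$, Theorem~\ref{irvgeneral} gives $\p_{\mathbf p}(C_2(n)=c_2,\dots)=\p_{\bm\theta}(\tilde C_2(n)=c_2,\dots\mid \tilde C_1(n)=0)=\p_{\bm\theta}(\tilde C_1(n)=0,\tilde C_2(n)=c_2,\dots)/\gamma_n(\bm\theta)$, using $\gamma_n(\bm\theta)=\p_{\bm\theta}(Y^n\in\Delta_n)$. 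Substituting (\ref{jointDistY}) with $c_1=0$ into the numerator produces the first equality of (\ref{jointDistX}) at once.

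The second equality is then algebra. The substitution $\theta_i=(i-1)q_i/(p_ip_{i-1})$ turns each factor $\theta_{\epsilon}/(\epsilon-1)$ into $q_{\epsilon}/(p_{\epsilon}p_{\epsilon-1})$, matching the product in the target. For the prefactor I would invoke Proposition~\ref{prop:gammai}, namely $\gamma_n(\bm\theta)=G_{n-1}(\bm\theta)(n-1)!/\bm\theta_{<n>}$, together with the identity $p_i=G_{i-1}(\bm\theta)/G_i(\bm\theta)$ from part (ii) of Theorem~\ref{irvgeneral} (valid also at $i=2$, since $p_2=G_1/G_2=1$); the product $\prod_{i=2}^{n-1}p_i$ then telescopes to $G_1/G_{n-1}=1/G_{n-1}(\bm\theta)$. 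Combining these with $\theta_1=1$ gives $(n-1)!\theta_1/(\gamma_n(\bm\theta)\bm\theta_{<n>})=\prod_{i=2}^{n-1}p_i$, and redistributing this weight as $\prod_{i=2}^{n-1}(p_i/c_i!)\cdot(1/c_n!)$ recovers the second line of (\ref{jointDistX}).
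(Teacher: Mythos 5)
Your proposal is correct and follows essentially the same route as the paper's (very terse) proof: sum the sequence weight (\ref{drnY}) over all sequences of the given cycle type to get (\ref{jointDistY}), apply the conditional relation of Theorem~\ref{irvgeneral} together with $\gamma_n(\bm\theta)=\p_{\bm\theta}(Y^n\in\Delta_n)$ for the first equality of (\ref{jointDistX}), and then identify the prefactor algebraically. The only cosmetic difference is in that last step, where you telescope $p_i=G_{i-1}(\bm\theta)/G_i(\bm\theta)$ via Proposition~\ref{prop:gammai} while the paper instead cites the product formula (\ref{gammai}); both identities yield $(n-1)!\,\theta_1/(\gamma_n(\bm\theta)\bm\theta_{<n>})=\prod_{i=2}^{n-1}p_i$ in one line.
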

\begin{proof}
The first part is straightforward from (\ref{drnY}), by summing over all possible $r\in \{0,1\}^n$, with $r_1=1$. The first equality in (\ref{jointDistX}) follows from (\ref{jointDistY}) and the conditional relation given in Theorem~\ref{irvgeneral}, while the second equality comes from the relation of $\bm\theta$ and $\mathbf{p}$, and (\ref{gammai}).
\end{proof}
Let by convention $\epsilon(0,c,\sigma)=\infty$. Note that $\epsilon(\|c\|,c,\sigma)=1$ and $\epsilon(\|c\|-1,c,\sigma)\geq 3$, for any $\sigma$ and $c$. The next corollary follows immediately.
\begin{corollary}
For any $n\geq 3$ and $c=(c_2,...,c_n)\in\Z_+^{n-1}$, with $\sum_{i=2}^n ic_i=n$,
\begin{multline*}
\p(C^\eta_2(n)=c_2,\ldots,C^\eta_n(n)=c_n)=\\
\frac{(\theta+1)(n-2)!\theta^{\|c\|}}{\theta_{(n-1)}}\prod\limits_{i=2}^n \frac{1}{c_i!}\sum\limits_{\sigma\in S_{\|c\|}} \epsilon^*(c,\sigma) \prod\limits_{i=1}^{\|c\|-1} \frac{\theta+\epsilon(i,c,\sigma)-2}{(\epsilon(i,c,\sigma)-1)(\epsilon(i,c,\sigma)-2)},
\end{multline*}
where $\epsilon(i,c,\sigma)$ is defined as in Proposition~\ref{jointdrbnX}, and 
\[
\epsilon^*(c,\sigma)=\mathbbm{1}\{\epsilon(\|c\|-1,c,\sigma)>3\}+\frac{1}{\theta+1}\mathbbm{1}\{\epsilon(\|c\|-1,c,\sigma)=3\}.
\]
\end{corollary}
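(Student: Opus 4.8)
The plan is to specialize Proposition~\ref{jointdrbnX} to the playground game by invoking Corollary~\ref{PGC}. For the $\eta$ process the correct parameter sequence is $\bm\theta=\bm\theta^*$ with $\theta_1=1$, $\theta_3^*=\theta$, and $\theta_i^*=\theta(\theta+i-2)/(i-2)$ for $i\geq 4$, and the normalizing constant $\gamma_n(\bm\theta^*)$ equals $\delta_n(\theta)$ from~(\ref{deltai}), since $Y^{\bm\theta^*}=\xi$. I would therefore start from the \emph{first} expression in~(\ref{jointDistX}),
\[
\p_{\mathbf p}(C_2(n)=c_2,\ldots,C_n(n)=c_n)=\frac{(n-1)!\,\theta_1}{\gamma_n(\bm\theta)\,\bm\theta_{<n>}}\prod_{i=2}^n\frac{1}{c_i!}\sum_{\sigma\in S_{\|c\|}}\prod_{i=1}^{\|c\|-1}\frac{\theta_{\epsilon(i,c,\sigma)}}{\epsilon(i,c,\sigma)-1},
\]
and split the work into two independent simplifications: the inner product over $i$, and the scalar prefactor.

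For the inner product I would substitute $\theta^*_\epsilon$. For $\epsilon\geq 4$ one has the clean identity $\theta^*_\epsilon/(\epsilon-1)=\theta(\theta+\epsilon-2)/\big((\epsilon-1)(\epsilon-2)\big)$, so the uniform closed form holds away from the boundary. The delicate point is the value $\epsilon=3$: there the true factor is $\theta^*_3/(3-1)=\theta/2$, whereas the uniform formula returns $\theta(\theta+1)/2$, overstating it by $(\theta+1)$. I would argue that $\epsilon=3$ is attained at most once among the factors: since $\epsilon(i,c,\sigma)=n+1-\sum_{j\leq i}\bar c_{\sigma(j)}$ and every cycle size $\bar c_{\sigma(j)}\geq 2$, the sequence $\epsilon(\cdot,c,\sigma)$ is strictly decreasing in $i$ by steps of at least two; combined with $\epsilon(\|c\|-1,c,\sigma)\geq 3$, the value $3$ (when it occurs) can only be the last factor $i=\|c\|-1$. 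This is exactly what the correction factor $\epsilon^*(c,\sigma)$ records, so
\[
\prod_{i=1}^{\|c\|-1}\frac{\theta^*_{\epsilon(i,c,\sigma)}}{\epsilon(i,c,\sigma)-1}=\epsilon^*(c,\sigma)\,\theta^{\|c\|-1}\prod_{i=1}^{\|c\|-1}\frac{\theta+\epsilon(i,c,\sigma)-2}{(\epsilon(i,c,\sigma)-1)(\epsilon(i,c,\sigma)-2)}.
\]

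For the prefactor I would first expand $\bm\theta^*_{<n>}=(\theta^*_2+1)(\theta+2)\prod_{i=4}^n(\theta^*_i+i-1)$, using the key observation that $\theta^*_i+i-1=\big(k(k+1)+\theta(\theta+k)\big)/k$ with $k=i-2$, so that $\prod_{i=4}^n(\theta^*_i+i-1)=\frac{1}{(n-2)!}\prod_{k=2}^{n-2}\big(k(k+1)+\theta(\theta+k)\big)$. Feeding this together with $\gamma_n(\bm\theta^*)=\delta_n(\theta)$ from~(\ref{deltai}), the factor $\theta^2+\theta+2$ (the $k=1$ term), the factor $\theta+2$, and the product $\prod_{k=2}^{n-2}$ all cancel, leaving the clean telescoping $\delta_n(\theta)\,\bm\theta^*_{<n>}=(n-1)(\theta+2)_{(n-3)}$. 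Hence the prefactor reduces to $(n-2)!/(\theta+2)_{(n-3)}$, which equals $(\theta+1)\theta(n-2)!/\theta_{(n-1)}$ via $\theta_{(n-1)}=\theta(\theta+1)(\theta+2)_{(n-3)}$.

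Finally I would assemble the two pieces: the surplus factor $\theta$ from the prefactor combines with $\theta^{\|c\|-1}$ from the product to give $\theta^{\|c\|}$, yielding exactly the asserted expression
\[
\frac{(\theta+1)(n-2)!\,\theta^{\|c\|}}{\theta_{(n-1)}}\prod_{i=2}^n\frac{1}{c_i!}\sum_{\sigma\in S_{\|c\|}}\epsilon^*(c,\sigma)\prod_{i=1}^{\|c\|-1}\frac{\theta+\epsilon(i,c,\sigma)-2}{(\epsilon(i,c,\sigma)-1)(\epsilon(i,c,\sigma)-2)}.
\]
I expect the main obstacle to be the boundary bookkeeping at $\epsilon=3$: one must justify that the uniform rational form is valid for all factors except possibly a single last one, and that the defect is precisely $(\theta+1)$, which is what forces the piecewise definition of $\epsilon^*(c,\sigma)$. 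The prefactor cancellation, while lengthy, is routine once the reparametrization $k=i-2$ is in place, and the degenerate case $\|c\|=1$ (a single $n$-cycle, empty product, $\epsilon^*=\mathbbm 1\{\epsilon(0,c,\sigma)>3\}=1$) should be checked separately against $\e_\theta(C^\eta_n(n))=(n-2)!/(\theta+2)_{(n-3)}$ from Theorem~\ref{cor:ECk} as a consistency test.
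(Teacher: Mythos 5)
Your proposal is correct and follows exactly the route the paper intends: the paper gives no written proof (it declares the corollary to "follow immediately" from Proposition~\ref{jointdrbnX} together with the observations $\epsilon(\|c\|,c,\sigma)=1$ and $\epsilon(\|c\|-1,c,\sigma)\geq 3$), and your argument is precisely that specialization, with the $\bm\theta^*$ substitution, the single possible boundary factor at $\epsilon=3$ producing $\epsilon^*(c,\sigma)$, and the cancellation $\delta_n(\theta)\,\bm\theta^*_{<n>}=(n-1)(\theta+2)_{(n-3)}$ all worked out correctly.
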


\subsection{The weak limit of $X^{n,\mathbf{p}}$}
As $X_m^m=0$, for any $m>2$, one cannot recover the outcomes or random derangements of $X^n$  from those of $X^m$, $n>m$. In other words, one cannot obtain the law of $X^m$ from the law of $X^n$ by projecting on the first $m$ components, i.e.
$$
\tilde\mu_n\pi^{-1}_m\neq \tilde\mu_m, \ n>m\geq 2,
$$
where $\tilde\mu_m$ is the law of $X^m$, $\pi_m(x)=(x_i)_{i=1}^m$ for $x\in\{0,1\}^\N$ or $x\in\{0,1\}^n$, $n\geq m$, and finally $\tilde \mu_n\pi^{-1}_m$ is the image of $\tilde \mu_n$ under $\pi_m$. Now, from (~\ref{X_Markov_relation}), as
$$
\mathcal{Q}_{j,i}(u,v)=\p(X_i^n=v\mid X_j^n=u)=\p(X_i^m=v\mid X_j^m=u),
$$
for any $n>m\geq j>i\geq 1$, assuming $\lim_{n\to \infty}\p(X_m^n=1)$ exists, for any $m\in \N$, we can conclude that $(X_m^n,\dots, X_1^n)$ has a weak limit, as $n\to \infty$. A natural framework to see this is through the reversed chain. From the time-reversal transformation, 
$$
\p(X_{i+1}^n=v\mid X_i^n=u)=\frac{\mathcal{Q}_{i+1,i}(v,u)\p(X_{i+1}^n=v)}{\p(X_i^n=u)}, \ u,v\in\{0,1\}.
$$

We need the following lemma.

\begin{lemma}\label{lemma-**}
Suppose
\begin{equation}\label{condition-**}
\sum_{j=1}^\infty p_j=\infty.
\end{equation}
Then $\varphi_i=\varphi_i(\mathbf{p}):=\lim\limits_{n\to \infty}\p(X_i^n=1)$ exists, and $0<\varphi_i<1$ for $i\geq 3$.
\end{lemma}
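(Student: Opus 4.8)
The plan is to read off the limit directly from the closed form in Lemma~\ref{ProbEta}. For fixed $i \ge 3$ and $n > i$, that lemma gives $\p(X_i^{n,\mathbf{p}} = 1) = \sum_{j=0}^{n-i-1} (-1)^j a_j$, where $a_j := \prod_{l=i}^{j+i} q_l$. Since $q_l = 1 - p_l \in (0,1)$ for every $l \ge i \ge 3$, each $a_j$ is positive and the ratio $a_{j+1}/a_j = q_{j+i+1} < 1$ shows that $(a_j)_{j \ge 0}$ is strictly decreasing. Thus, for each $n$, $\p(X_i^{n,\mathbf{p}} = 1)$ is a partial sum of one fixed alternating series whose terms decrease in absolute value; as $n \to \infty$ the number of summands tends to infinity, so the existence of $\varphi_i$ reduces to the convergence of $\sum_{j=0}^\infty (-1)^j a_j$.

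The crucial step, and the only place the hypothesis (\ref{condition-**}) is used, is to show $a_j \to 0$. Writing $a_j = \prod_{l=i}^{j+i}(1-p_l)$, I would invoke the standard criterion that for $p_l \in [0,1)$ one has $\prod_{l=i}^\infty (1-p_l) = 0$ if and only if $\sum_{l=i}^\infty p_l = \infty$. Deleting the finitely many terms $p_1, \dots, p_{i-1}$ from $\sum_{j=1}^\infty p_j = \infty$ leaves $\sum_{l \ge i} p_l = \infty$, so the infinite product vanishes and hence $a_j \to 0$. By the Leibniz alternating series test, $\varphi_i = \sum_{j=0}^\infty (-1)^j a_j$ exists, which establishes the first assertion.

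Finally, the two-sided bounds come essentially for free from the alternating structure. For a decreasing positive null sequence, the sum lies between any two consecutive partial sums, so $a_0 - a_1 \le \varphi_i \le a_0$. Here $a_0 = q_i$ and $a_0 - a_1 = q_i(1 - q_{i+1}) = q_i\, p_{i+1}$, whence $0 < q_i\, p_{i+1} \le \varphi_i \le q_i < 1$, using $q_i = 1 - p_i \in (0,1)$ and $p_{i+1} > 0$ for $i \ge 3$. This yields $0 < \varphi_i < 1$ and completes the argument. I expect the only genuine obstacle to be the product-to-zero criterion in the second paragraph; everything else is an immediate consequence of the alternating-series form supplied by Lemma~\ref{ProbEta}.
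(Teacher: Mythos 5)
Your proof is correct and follows essentially the same route as the paper's: both use the alternating-series formula of Lemma~\ref{ProbEta}, the standard criterion that $\sum p_l = \infty$ forces $\prod_{l\ge i} q_l \to 0$, and the Leibniz-type bounds yielding $q_i p_{i+1} \le \varphi_i \le q_i$, hence $0 < \varphi_i < 1$. The paper states these steps more tersely; you have merely spelled out the details it leaves implicit.
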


\begin{proof}
We have $\sum_{j=1}^\infty p_j=\infty$ if and only if $\prod_{i=j}^\infty q_i=0$ for any $j\in\N$. Now from Lemma~\ref{ProbEta}, $\lim_{n\to\infty}\p(X_i^n=1)$ exists and $0<q_ip_{i+1}<\varphi_i<q_i<1$.
\end{proof}

Note that we always have $\varphi_1=1-\varphi_2=1$. In the rest of this section, we assume condition (\ref{condition-**}) holds, hence $\varphi_i\in(0,1)$ is well-defined for $i\geq 3$. We are now ready to formally define $X^{\infty,\mathbf{p}}=(X^{\infty}_i)_{i\geq 1}$ by $X^{\infty}_1=1$ and 
\begin{eqnarray}
\p_{\mathbf{p}}(X^{\infty}_{i+1}=1\mid X^{\infty}_i=0)&=&1-\p_{\mathbf{p}}(X^{\infty}_{i+1}=0\mid X^{\infty}_i=0)\nonumber \\[6pt] 
&=&\lim_{n\to\infty}\frac{\p_{\mathbf{p}}(X^{n}_{i+1}=1)}{\p_{\mathbf{p}}(X^{n}_i=0)}=\frac{\varphi_{i+1}(\mathbf{p})}{1-\varphi_{i}(\mathbf{p})}, \label{transition-infinite}
\end{eqnarray}
and 
$$
\p_{\mathbf{p}}(X^{\infty}_{i+1}=1\mid X^{\infty}_i=1)=1-\p_{\mathbf{p}}(X^{\infty}_{i+1}=0\mid X^{\infty}_i=1)=0.
$$
Letting $X_i^{n,\mathbf{p}}=0$, for $i>n+1$, we have as $n\to\infty$
$$
(X^{n,\mathbf{p}}_1, X^{n,\mathbf{p}}_2,\dots)\Rightarrow (X^{\infty,\mathbf{p}}_1,X^{\infty,\mathbf{p}}_2,\dots).
$$
To see this notice that,  for any $m\in\N$ and $x=(x_i)_{i=1}^\infty\in \{0,1\}^{\N}$ with $x_1=1$ and $x_2=0$
\begin{eqnarray*}
\lim\limits_{n\to\infty}\p_{\mathbf{p}}(X^{n}_i=x_i;i=1,\dots,m)&=&\lim\limits_{n\to\infty}\p_{\mathbf{p}}(X^{n}_m=x_m)\prod_{i=1}^{m-1}\mathcal{Q}_{i+1,i}(x_{i+1},x_i)\\[6pt]
&=&\prod_{i=1}^{m-1}\mathcal{Q}_{i+1,i}(x_{i+1},x_i)\frac{\lim\limits_{n\to\infty}\p_{\mathbf{p}}(X^{n}_{i+1}=x_{i+1})}{\lim\limits_{n\to\infty}\p_{\mathbf{p}}(X^{n}_{i}=x_{i})}\\[6pt]
&=&\p_{\mathbf{p}}(X^{\infty}_i=x_i;i=1,\dots,m),
\end{eqnarray*}
where the last equality follows from the time-reversal transformation. Denote by $\mu_n$ the law of $(X_1^n,\cdots,X_n^n)$ and by $\mu$ the law of $X^{\infty,\mathbf{p}}$, and recall the definition of $\Delta_n$ from (\ref{Deltaj}). The next theorem gives the total variation distance between $\mu\pi_n^{-1}$ and $\mu_n$.

\begin{theorem}\label{TV-inf-ch}
For any $n\in\N$, 
$$
d_{TV}(\mu\pi^{-1}_n,\mu_n)=\varphi_n \mathbbm{1}\{n>1\}.
$$
\end{theorem}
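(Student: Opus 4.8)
The plan is to exploit the product representation of both measures derived just above the statement, which shows that $\mu_n$ and $\mu\pi_n^{-1}$ differ only in how much mass they place on the final coordinate. Concretely, write $w(x):=\prod_{i=1}^{n-1}\mathcal{Q}_{i+1,i}(x_{i+1},x_i)$ for $x=(x_1,\dots,x_n)$ with $x_1=1$. The time-reversal identity already established gives, for every such $x$,
$$\mu_n(x)=\p_{\mathbf{p}}(X_n^n=x_n)\,w(x),\qquad \mu\pi_n^{-1}(x)=\lim_{m\to\infty}\p_{\mathbf{p}}(X_n^m=x_n)\,w(x),$$
so the two laws share the identical factor $w(x)$ and are distinguished solely by the $n$-th marginal. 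First I would record the two relevant marginals: since $P_n^{n,\mathbf{p}}$ forces $X_n^n=0$, one has $\p(X_n^n=0)=1$ and $\p(X_n^n=1)=0$, whereas by the definition of $\varphi_n$ the limiting marginal is $\lim_m\p(X_n^m=1)=\varphi_n$ and $\lim_m\p(X_n^m=0)=1-\varphi_n$.

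Next I would compute the total variation distance through $d_{TV}(P,Q)=\sum_x\bigl(P(x)-Q(x)\bigr)_+$ with $P=\mu\pi^{-1}_n$ and $Q=\mu_n$. Because $\mu_n$ is supported on the set $\{x_n=0\}$, on that set $\mu\pi_n^{-1}(x)=(1-\varphi_n)w(x)\le w(x)=\mu_n(x)$ (using $\varphi_n\ge 0$ and $w(x)\ge 0$), so $P-Q\le 0$ there and these configurations contribute nothing to the positive part. All of the excess mass of $P$ over $Q$ therefore sits on $\{x_n=1\}$, where $Q\equiv 0$; summing the positive part gives
$$d_{TV}(\mu\pi^{-1}_n,\mu_n)=\sum_{x:\,x_n=1}\mu\pi_n^{-1}(x)=\p_{\mathbf{p}}(X_n^\infty=1)=\varphi_n.$$
This is exactly the claimed value for $n\ge 2$. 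I would close by treating $n=1$ separately: there the only admissible string is $x_1=1$, both measures are the point mass at it, so $d_{TV}=0$, matching $\varphi_1\,\mathbbm{1}\{1>1\}=0$; the indicator is precisely what accounts for the fact that when $n=1$ the support $\{x_n=0\}$ of $\mu_n$ is empty and the identity $\{x_n=1\}=\{x_1=1\}$ carries all the mass of both measures, so the last display no longer applies.

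The main point requiring care is the shared-factorization claim itself: I must make sure the product formula obtained from time reversal holds up to and including the boundary coordinate $i=n$, i.e. that $\mu\pi_n^{-1}$ genuinely factors as $\p(X_n^\infty=x_n)\,w(x)$ with the \emph{same} $w$ as for $\mu_n$, and that no extra constraint hides in the last factor. This follows from the telescoping already displayed in the excerpt (the denominators cancel, leaving only the $n$-th marginal), together with the fact that $\mathcal{Q}_{i+1,i}$ depends only on the common transition structure and not on the horizon. Once this is in hand, everything else is a short, routine summation, and no inclusion--exclusion over the configuration set $\Delta_n$ is needed.
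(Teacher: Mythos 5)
Your proof is correct and takes essentially the same route as the paper's: both rest on the time-reversal factorization $\mu_n(x)=\p(X_n^n=x_n)\,w(x)$ and $\mu\pi_n^{-1}(x)=\p(X_n^\infty=x_n)\,w(x)$ with the common weight $w(x)=\prod_{i=1}^{n-1}\mathcal{Q}_{i+1,i}(x_{i+1},x_i)$, followed by splitting the state space according to $x_n=0$ versus $x_n=1$. The only difference is cosmetic: you evaluate $d_{TV}=\sum_x\bigl(P(x)-Q(x)\bigr)_+$ so that only $\{x_n=1\}$ contributes, whereas the paper sums $\tfrac12\lvert P(x)-Q(x)\rvert$ over both events and obtains $\tfrac12(\varphi_n+\varphi_n)=\varphi_n$.
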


\begin{proof}
For $n=1,2$, both sides of the above equality equal $0$. For $n>2$, write
\begin{eqnarray*}
\lefteqn{d_{TV}(\mu\pi^{-1}_n,\mu_n)}\\
&=&\hskip -.06in \frac{1}{2}\sum\limits_{x\in \Delta_n}\left|\p_p(X_i^n=x_i; \ i=1,\cdots,n)-\p_p(X_i^\infty=x_i; \ i=1,\cdots,n)\right|\\
&&+\frac{1}{2}\sum\limits_{x\in \Delta_{n-1}}\p_p(X_n^\infty=1, X_i^\infty=x_i; \ i=1,\cdots,n-1)\\
&=&\hskip -.06in \frac{1}{2}\sum_{x\in\Delta_n}\left|\left(1-\prod_{i=1}^{n-1}\frac{\lim\limits_{n\to\infty}\p_{\mathbf{p}}(X^{n}_{i+1}=x_{i+1})}{\lim\limits_{n\to\infty}\p_{\mathbf{p}}(X^{n}_{i}=x_{i})}\right)\prod_{i=1}^{n-1}\mathcal{Q}_{i+1,i}(x_{i+1},x_i)\right|\\[4pt]
&&\hskip -.2in +\frac{1}{2}\sum_{x\in\Delta_{n-1}}\frac{\mathcal{Q}_{n,n-1}(1,0)\varphi_n}{\lim\limits_{n\to \infty} \p_p(X_{n-1}=x_{n-1})}\prod_{i=1}^{n-2}\mathcal{Q}_{i+1,i}(x_{i+1},x_i)\frac{\lim\limits_{n\to\infty}\p_{\mathbf{p}}(X^{n}_{i+1}=x_{i+1})}{\lim\limits_{n\to\infty}\p_{\mathbf{p}}(X^{n}_{i}=x_{i})}\\[6pt]
&=&\hskip -.06in \frac{1}{2}\sum_{x\in\Delta_n}(1-\p_{\mathbf{p}}(X^{\infty}_n=0))\prod_{i=1}^{n-1}\mathcal{Q}_{i+1,i}(x_{i+1},x_i)\\
&&+\frac{1}{2}\sum_{x\in\Delta_{n-1}}\mathcal{Q}_{n,n-1}(1,0)\varphi_n \prod_{i=1}^{n-2}\mathcal{Q}_{i+1,i}(x_{i+1},x_i)=\frac{1}{2}\varphi_n+\frac{1}{2}\varphi_n=\varphi_n.
\end{eqnarray*}
\end{proof}

We notice that (\ref{condition-**}) does not guarantee $d_{TV}(\mu\pi^{-1}_n,\mu_n)=\varphi_n\to 0$, as $n\to\infty$. For instance, if $q_n=q\in (0,1)$ for $n\in\N$, then $\varphi$ and $X^{\infty,\mathbf{p}}$ are well-defined but $\varphi_n=q/(1+q)>0$.  Note that
$
q_n-q_n q_{n+1}\leq \varphi_n\leq q_n, 
$
and 
$
q_n(1-\varphi_{n+1})=\varphi_n.
$
Therefore, as $n\to\infty$, $q_n\to 0$ if and only if $\varphi_n\to 0$, hence the following result.

\newpage
\begin{corollary}\label{TV-converge}
The following are equivalent.
\begin{itemize}
\item[(i)] $\sum\limits_{n=1}^\infty p_n=\infty$ and as $n\to \infty$,
$
d_{TV}(\mu\pi^{-1}_n,\mu_n)\to 0.
$
\item[(ii)] As $n\to \infty$, $q_n\to 0$.
\end{itemize}
\end{corollary}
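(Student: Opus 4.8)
The plan is to use Theorem~\ref{TV-inf-ch} to reduce the total variation statement to a statement about the scalars $\varphi_n$, and then to exploit an exact recursive identity linking $\varphi_n$ and $q_n$. First I would record that, by Theorem~\ref{TV-inf-ch}, $d_{TV}(\mu\pi_n^{-1},\mu_n)=\varphi_n$ for $n>1$, so that the convergence $d_{TV}(\mu\pi_n^{-1},\mu_n)\to 0$ appearing in $(i)$ is exactly the statement $\varphi_n\to 0$. Since condition~(\ref{condition-**}) is precisely what Lemma~\ref{lemma-**} needs in order to guarantee that each $\varphi_i$ exists and lies in $(0,1)$, the corollary reduces to the claim that, whenever the $\varphi_i$ are well defined, $\varphi_n\to 0$ if and only if $q_n\to 0$.

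The analytic engine is the identity
$$
\varphi_n=q_n\,(1-\varphi_{n+1}),
$$
obtained by letting $n\to\infty$ in the recursion $\p(X_i^n=1)=q_i\big(1-\p(X_{i+1}^n=1)\big)$ stated just before Lemma~\ref{ProbEta}; the passage to the limit is legitimate because $\p(X_i^n=1)\to\varphi_i$ for each fixed $i$ by Lemma~\ref{lemma-**}. Together with $0<\varphi_{n+1}<1$ this yields the two-sided bound $q_n(1-q_{n+1})\le\varphi_n\le q_n$, which makes both implications transparent.

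For the implication $(ii)\Rightarrow(i)$, suppose $q_n\to 0$. Then $p_n=1-q_n\to 1$, so $\sum_n p_n$ diverges and~(\ref{condition-**}) holds; Lemma~\ref{lemma-**} then makes the $\varphi_i$ meaningful, and the upper bound $\varphi_n\le q_n$ forces $\varphi_n\to 0$, which by Theorem~\ref{TV-inf-ch} is the desired convergence of the total variation distance. Conversely, for $(i)\Rightarrow(ii)$, condition~(\ref{condition-**}) is assumed, so the $\varphi_i\in(0,1)$ are defined, and $d_{TV}(\mu\pi_n^{-1},\mu_n)\to 0$ gives $\varphi_n\to 0$ via Theorem~\ref{TV-inf-ch}. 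Solving the identity for $q_n=\varphi_n/(1-\varphi_{n+1})$ and using $\varphi_{n+1}\to 0$---so that the denominator tends to $1$ and is eventually at least $1/2$---yields $q_n\to 0$.

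There is no deep obstacle here; the only points that demand care are justifying the passage to the limit in the recursion (which rests on the fixed-$i$ convergence from Lemma~\ref{lemma-**}) and, in the direction $(i)\Rightarrow(ii)$, confirming that the denominator $1-\varphi_{n+1}$ stays bounded away from $0$ when the identity is inverted, which is itself an immediate consequence of $\varphi_{n+1}\to 0$.
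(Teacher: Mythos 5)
Your proposal is correct and follows essentially the same route as the paper, which likewise combines Theorem~\ref{TV-inf-ch} with the identity $\varphi_n=q_n(1-\varphi_{n+1})$ and the resulting bound $\varphi_n\le q_n$ to conclude that $\varphi_n\to 0$ if and only if $q_n\to 0$. Your write-up merely makes explicit two points the paper leaves implicit --- the passage to the limit in the recursion $\p(X_i^n=1)=q_i(1-\p(X_{i+1}^n=1))$, and the fact that $q_n\to 0$ forces $\sum_n p_n=\infty$ so that the $\varphi_n$ are well defined in the direction $(ii)\Rightarrow(i)$ --- which is a fair elaboration rather than a different argument.
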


As examples, $\lim_{n\rightarrow\infty} q_n=0$, for both $\eta^{n,\theta},\tilde\eta^{n,\theta}$, and hence $\eta^{\infty,\theta},\tilde\eta^{\infty,\theta}$ are well-defined. Furthermore, $0<d_{TV}(\mu\pi_n^{-1},\mu_n)=\varphi_n\rightarrow 0$, as $n\rightarrow \infty$, for both processes. More exactly, from Lemma \ref{marginal-eta}, for $\eta$ we have
\begin{equation}\label{phi-eta}
\varphi_n^\eta=\lim_{m\to\infty}\p_\theta(\eta_n^m=1)=\p_\theta(\eta_n^\infty =1)=\theta\int_0^1 e^{-\theta u}(1-u)^{\theta+n-2} du.   
\end{equation}
For $\tilde\eta$, from the conditional relation between $\tilde\eta$ and $\tilde\xi$, given in~\cite{sjt2021} and this paper,
\[
\varphi_n^{\tilde\eta}=\lim\limits_{m\rightarrow \infty} \p_\theta(\tilde{\eta}_n^{m}=1)  = \p_\theta(\tilde{\eta}_n^{\infty}=1) =\frac{\lambda_{n+1,\infty}(\theta)\p_\theta(\tilde\xi_n=1)\lambda_{n-1}(\theta)}{\lambda_\infty(\theta)},
\]
where $\lambda_n(\theta)$, the probability that a $\theta$-biased random permutation of size $n$ is a derangement, given in Equation $(7)$ in~\cite{sjt2021},
\[
\lambda_{n+1,\infty}:=\p_\theta(\tilde\xi_{n+1}+\sum\limits_{j=n+1}^\infty\tilde\xi_j\tilde\xi_{j+1}=0)=M(\theta+1,\theta+n,-\theta),
\]
as given in Theorem 6 in~\cite{sjt2021}, and
\[
\lambda_\infty(\theta):=\p_\theta(\sum\limits_{j=1}^\infty\tilde\xi_j\tilde\xi_{j+1}=0)=\lim\limits_{n\to\infty}\lambda_n(\theta)=e^{-\theta}.
\]
Therefore,
\[
\varphi_n^{\tilde\eta}= \frac{\theta e^\theta \lambda_{n-1}(\theta)}{\theta+n-1}M(\theta+1,\theta+n,-\theta).
\]
The transition matrices of the limit Markov chains $\eta^\infty$ and $\tilde\eta^\infty$ can be easily obtained from (\ref{transition-infinite}).

\subsection{Conditional relation between $X^{\infty,\mathbf{p}}$ and $Y^{\bm\theta}$}
\label{sec:condRelationXinfinite}

Theorem~\ref{irvgeneral} implies that, when $\theta_i=(i-1)q_i/(p_i p_{i-1})$, for $i\geq 3$
$$
\p_{\mathbf{p}}(X^{n}_i=1)=1-\p_{\mathbf{p}}(X^{n}_i=0)=\frac{\theta_i\gamma_{i+1,n}(\bm{\theta})\gamma_{i-1}(\bm{\theta})/(i-1+\theta_i)}{\gamma_{n}(\bm{\theta})},
$$
where $\gamma_0(\bm\theta)=1$, $\gamma_{n,n}(\bm\theta)=\p_{\bm\theta}(Y_n=0)$, $\gamma_{n+1,n}(\bm\theta)=0$, and for $2\leq j<n$
$$
\gamma_{j,n}(\bm\theta)=\p_{\bm\theta}\left(Y_j+Y_n+\sum_{k=j}^{n-1}Y_kY_{k+1}=0\right).
$$
This reduces the transition probabilities of a finite reversed chain to
\begin{equation}\label{eqcond1}
\begin{split}
\p_{\mathbf{p}}(X_{i+1}^n=1 &\mid X_i^n=0)= 1-\p_{\mathbf{p}}(X_{i+1}^n=0\mid X_i^n=0)\\
=&\frac{\p_p(X_{i+1}^n=1)}{\p_p(X_i^n=0)}=\frac{\theta_{i+1}\gamma_{i+2,n}(\bm\theta)}{\theta_{i+1}\gamma_{i+2,n}(\bm\theta)+(i+\theta_{i+1})\gamma_{i+1,n}(\bm\theta)}.
\end{split}
\end{equation}
We look for conditions under which we can extend the conditional relation for $X^\infty$ and $Y$. We first record some useful properties of $Y$.  For $i\geq 2$, let
$$
\gamma_{i,\infty}(\bm\theta):=\p(Y_i+\sum_{j=i}^\infty Y_jY_{j+1}=0),
$$
and $
\gamma_{\infty}(\bm\theta):=\gamma_{2,\infty}(\bm\theta)$. Consider the following conditions
\begin{equation}\label{eqcond2}
\sum_{i=1}^\infty\frac{\theta_i\theta_{i+1}}{(i-1+\theta_i)(i+\theta_{i+1})}<\infty,
\end{equation}

\begin{equation}\label{eqcond3}
\theta_n/n\to 0, \ \ n\to \infty,
\end{equation}

\begin{equation}\label{eqcond4}
\sum_{i=1}^\infty \left(\frac{\theta_i}{i-1+\theta_i}\right)^2<\infty.
\end{equation}

Note that condition (\ref{eqcond3}) is equivalent to $\theta_n/(n-1+\theta_n)\to 0$,as $n\to \infty$. Also, (\ref{eqcond4}) implies (\ref{eqcond2}) and (\ref{eqcond3}), but not vice versa. For $j\geq 2$, let 
$$
\tilde{C}_j(n)=Y_{n-j+1}\prod_{l=2}^j(1-Y_{n-j+l})+\sum_{i=1}^{n-j}Y_iY_{i+j}\prod_{l=1}^{j-1}(1-Y_{i+l}),
$$
and $\tilde{C}_1(n)=Y_n+\sum_{i=1}^{n-1}Y_iY_{i+1}.$ Similarly, for $j\geq 2$,
$$
\tilde{C}_1(\infty)=\sum_{i=1}^{\infty}Y_iY_{i+1}, \ \ \text{and} \ \ \tilde{C}_j(\infty)= \sum_{i=1}^{\infty}Y_iY_{i+j}\prod_{l=1}^{j-1}(1-Y_{i+l}).
$$
From the definition $\gamma_\infty(\bm\theta)=\p_{\bm\theta}(\tilde C_1(\infty)=0)$. We have the following result.

\begin{theorem}\label{Y-limit-positive}
\begin{itemize}
\item[(i)] If (\ref{eqcond2}) and (\ref{eqcond3}) hold, then as $n\to \infty$, $\gamma_{n,\infty}(\bm\theta)\to 1$ and $\gamma_{i,n}(\bm\theta)\to \gamma_{i,\infty}(\bm\theta),$ for $i\geq 2$. In particular, $\gamma_n(\bm\theta) \to \gamma_\infty(\bm\theta)$, as $n\to \infty$.
\item[(ii)] If (\ref{eqcond2}) holds, then $\gamma_\infty(\bm\theta)>0$, $\gamma_{i,\infty}(\bm\theta)>0$, $i\in\N$.
\item[(iii)] If (\ref{eqcond4}) holds, then for any $m\in\N$,
\begin{equation}\label{dtvconv}
d_{TV}\left(\mathcal{L}(\tilde C_1(n),\dots,\tilde C_m(n)),\mathcal{L}(\tilde C_1(\infty),\dots,\tilde C_m(\infty))\right)\to 0,
\end{equation}
as $n\to \infty$. If in addition $\bm\theta$ is a bounded sequence, then (\ref{dtvconv}) holds for $m=m(n)=o(n)$, as $n\to \infty$.
\end{itemize}
\end{theorem}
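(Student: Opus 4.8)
The plan is to handle the three parts separately, using throughout the shorthand $\beta_i := \p_{\bm\theta}(Y_i=1)=\theta_i/(i-1+\theta_i)$ and the dictionary that (\ref{eqcond2}) reads $\sum_i \p_{\bm\theta}(Y_iY_{i+1}=1)=\sum_i \beta_i\beta_{i+1}<\infty$, that (\ref{eqcond3}) reads $\beta_n\to 0$, and that (\ref{eqcond4}) reads $\sum_i\beta_i^2<\infty$. Independence of the $Y_i$ makes every probability below factorize, so the only analytic inputs are tails of convergent series and the elementary bound $\beta_i\beta_{i+j}\le\tfrac12(\beta_i^2+\beta_{i+j}^2)$. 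I will also use that $\gamma_n(\bm\theta)=\gamma_{2,n}(\bm\theta)$, since $Y_1=1$ almost surely forces $Y_2=0$ for membership in $\Delta_n$ and reduces the no-$11$ constraint to the block $Y_2,\dots,Y_n$. For part (i), I would first write $1-\gamma_{n,\infty}(\bm\theta)=\p(Y_n=1 \text{ or } Y_jY_{j+1}=1 \text{ for some } j\ge n)$ and bound it by $\beta_n+\sum_{j\ge n}\beta_j\beta_{j+1}$, where the first term vanishes by (\ref{eqcond3}) and the second is the tail of the convergent series in (\ref{eqcond2}); hence $\gamma_{n,\infty}\to 1$. For $\gamma_{i,n}\to\gamma_{i,\infty}$, set $\tilde\gamma_{i,n}:=\p(Y_i=0,\ \sum_{k=i}^{n-1}Y_kY_{k+1}=0)$, the same quantity with the terminal constraint $Y_n=0$ dropped; the defining events decrease in $n$ to the event of $\gamma_{i,\infty}$, so continuity from above gives $\tilde\gamma_{i,n}\downarrow\gamma_{i,\infty}$, while $0\le\tilde\gamma_{i,n}-\gamma_{i,n}\le\p(Y_n=1)=\beta_n\to0$. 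Taking $i=2$ and using $\gamma_n=\gamma_{2,n}$ yields $\gamma_n(\bm\theta)\to\gamma_\infty(\bm\theta)$.

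For part (ii), I would fix $i\ge 2$ and choose $N\ge i$ so large that $\sum_{j>N}\beta_j\beta_{j+1}<1$, which is possible by (\ref{eqcond2}). Restricting to the configuration in which $Y_i=\dots=Y_N=0$ and no $11$-pattern occurs beyond $N$, independence gives
\begin{equation*}
\gamma_{i,\infty}(\bm\theta)\ \ge\ \Big(\prod_{j=i}^N \p(Y_j=0)\Big)\,\p\big(Y_jY_{j+1}=0\ \text{for all } j> N\big)\ \ge\ \Big(\prod_{j=i}^N \tfrac{j-1}{j-1+\theta_j}\Big)\Big(1-\sum_{j>N}\beta_j\beta_{j+1}\Big).
\end{equation*}
The finite product is strictly positive because every factor is (here $\theta_j<\infty$ and $j\ge 2$), and the second factor is positive by the choice of $N$; hence $\gamma_{i,\infty}>0$, and in particular $\gamma_\infty=\gamma_{2,\infty}>0$.

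For part (iii) I would couple the finite and infinite cycle counts by building both from one and the same sequence $Y$, so that
\begin{equation*}
d_{TV}\big(\mathcal L(\tilde C_1(n),\dots,\tilde C_m(n)),\mathcal L(\tilde C_1(\infty),\dots,\tilde C_m(\infty))\big)\ \le\ \p\big(\tilde C_j(n)\ne \tilde C_j(\infty)\ \text{for some } 1\le j\le m\big).
\end{equation*}
Comparing the two definitions term by term gives $\tilde C_j(n)-\tilde C_j(\infty)=B_j-T_j$, where $B_j:=Y_{n-j+1}\prod_{l=2}^j(1-Y_{n-j+l})$ is the boundary contribution and $T_j:=\sum_{i\ge n-j+1}Y_iY_{i+j}\prod_{l=1}^{j-1}(1-Y_{i+l})$ collects the tail spacings; thus $\{\tilde C_j(n)\ne\tilde C_j(\infty)\}\subseteq\{B_j=1\}\cup\{T_j\ge1\}$, and a union bound yields
\begin{equation*}
d_{TV}\ \le\ \sum_{j=1}^m \beta_{n-j+1}+\sum_{j=1}^m\sum_{i\ge n-j+1}\beta_i\beta_{i+j}.
\end{equation*}
For fixed $m$ both sums vanish as $n\to\infty$: the first is a finite sum of terms each tending to $0$ by (\ref{eqcond3}) (which follows from (\ref{eqcond4})), and the second is at most $\tfrac12\sum_{j=1}^m\sum_{i\ge n-m+1}(\beta_i^2+\beta_{i+j}^2)$, a finite combination of tails of the convergent series $\sum_i\beta_i^2$ supplied by (\ref{eqcond4}).

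For the refinement to $m=m(n)=o(n)$ under a bounded $\bm\theta$, say $\theta_i\le\Theta$, I would use $\beta_i\le\Theta/(i-1)$ to make the two sums quantitative: the boundary sum is at most $\Theta\sum_{k=n-m+1}^n 1/(k-1)=O(m/n)$, and, bounding $\beta_i\beta_{i+j}\le\Theta^2/(i-1)^2$ and summing the tail, the double sum is also $O(m/n)$; both vanish since $m/n\to0$. The main obstacle is exactly this last decomposition: the crude containment $\{\tilde C_j(n)\ne\tilde C_j(\infty)\}\subseteq\{Y_k=1 \text{ for some } k\ge n-j+1\}$ produces the bound $\sum_{k\ge n-m+1}\beta_k$, which diverges when $\beta_k\asymp\theta/k$, so one must keep the boundary and tail-spacing parts separate and exploit the quadratic summability (\ref{eqcond4}) rather than summability of the $\beta_k$ themselves.
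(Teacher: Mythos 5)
Your proposal is correct in all three parts, and for parts (ii) and (iii) it follows essentially the paper's own argument: for (ii) the paper likewise lower-bounds $\gamma_{i,\infty}(\bm\theta)$ by the probability of an all-zero finite window times the probability of no $11$-pattern in the tail (written there as $\gamma_{i,n}(\bm\theta)\,\p_{\bm\theta}(\sum_{j\ge n+1}Y_jY_{j+1}=0)$, with positivity of the finite factor left implicit, while you make it explicit as a product of $\p_{\bm\theta}(Y_j=0)$); for (iii) the paper uses the same coupling of $(\tilde C_j(n))_{j\le m}$ and $(\tilde C_j(\infty))_{j\le m}$ on a single sequence $Y$, the same split of the discrepancy event into a boundary window $\{Y_{n-m+1}=\dots=Y_n=0\}^c$ plus tail spacings, the same $ab\le\tfrac12(a^2+b^2)$ reduction to $m\sum_{i\ge n}\bigl(\theta_i/(i-1+\theta_i)\bigr)^2$, and the same bound $\theta_i/(i-1+\theta_i)\le c/(i-1)$ giving $O(m/n)$ for the $m(n)=o(n)$ refinement. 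The one genuinely different step is the middle claim of (i): to prove $\gamma_{i,n}(\bm\theta)\to\gamma_{i,\infty}(\bm\theta)$, the paper partitions the event defining $\gamma_{i,\infty}$ according to the value of $Y_n$, obtaining the exact identity $\gamma_{i,\infty}(\bm\theta)=\gamma_{i,n}(\bm\theta)\gamma_{n,\infty}(\bm\theta)/\p_{\bm\theta}(Y_n=0)+\gamma_{i,n-1}(\bm\theta)\p_{\bm\theta}(Y_n=1)\gamma_{n+1,\infty}(\bm\theta)$, and then lets $n\to\infty$, which requires $\gamma_{n,\infty}(\bm\theta)\to1$ and hence both (\ref{eqcond2}) and (\ref{eqcond3}); you instead drop the terminal constraint $Y_n=0$, note that the resulting events decrease to the event defining $\gamma_{i,\infty}$ so that continuity from above applies, and control the discarded constraint by $0\le\tilde\gamma_{i,n}-\gamma_{i,n}\le\p_{\bm\theta}(Y_n=1)\to0$. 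Your route is slightly more elementary and separates the hypotheses more sharply---the convergence $\gamma_{i,n}(\bm\theta)\to\gamma_{i,\infty}(\bm\theta)$ uses only (\ref{eqcond3}), with (\ref{eqcond2}) entering only through $\gamma_{n,\infty}(\bm\theta)\to1$ and the positivity of the limit---whereas the paper's conditioning identity is exact and therefore also quantifies the rate of convergence of $\gamma_{i,n}$ to $\gamma_{i,\infty}$ in terms of $\p_{\bm\theta}(Y_n=1)$ and $1-\gamma_{n,\infty}(\bm\theta)$.
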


\begin{proof}
For $(i)$, applying Borel-Cantelli lemma, we can write
\begin{equation}\label{limY.i.o.}
\lim_{n\to \infty}\p_{\bm\theta}\left(\sum_{j=n}^\infty Y_jY_{j+1}=0\right)=1-\p_{\bm\theta}(Y_n=Y_{n+1}=1 \ \ i.o.)=1;
\end{equation}
$$
\lim_{n\to \infty} \gamma_{n,\infty}(\bm\theta)=\lim_{n\to \infty}\p_{\bm\theta}(Y_n=0)\p_{\bm\theta}\left(\sum_{j=n+1}^\infty Y_jY_{j+1}=0\right)=1.
$$
Now, we have
\[
\begin{split}
\gamma_{i,\infty}&(\bm\theta)= \lim_{n\to\infty}\sum_{\ell=1}^2 \p_{\bm\theta}(Y_i+\sum_{j=i}^\infty Y_j Y_{j+1}=0,Y_n=\ell)\\
=&\lim_{n\to \infty} \left(\frac{\gamma_{i,n}(\bm\theta)\gamma_{n,\infty}(\bm\theta)}{\p_{\bm\theta}(Y_n=0)}+\gamma_{i,n-1}(\bm\theta)\p_{\bm\theta}(Y_n=1)\gamma_{n+1,\infty}(\bm\theta)\right)=\lim_{n\to \infty} \gamma_{i,n}(\bm\theta),
\end{split}
\]
since $\gamma_{n,\infty}(\bm\theta),\p_{\bm\theta}(Y_n=0)\to 1$ and $\p_{\bm\theta}(Y_n=1)\to 0$, as $n\to\infty$. Noting $\gamma_{2,n}(\bm\theta)=\gamma_n(\bm\theta)$ and $\gamma_{2,\infty}(\bm\theta)=\gamma_\infty(\bm\theta)$, finishes $(i)$.

For $(ii)$, note that from $(\ref{limY.i.o.})$, $\p_{\bm\theta}(\sum_{j=n}^\infty Y_jY_{j+1}=0)$ gets very close to $1$ for large $n$, and therefore 
\begin{eqnarray*}
\gamma_{i,\infty}(\bm\theta)\geq \p_{\bm\theta}\left(Y_n=0, \  Y_i+\sum_{j=i}^{\infty} Y_jY_{j+1}=0\right)= \gamma_{i,n}(\bm\theta)\p_{\bm\theta}\left(\sum_{j=n+1}^\infty Y_jY_{j+1}=0\right)>0.
\end{eqnarray*}
For $(iii)$, note that
\begin{eqnarray*}
\lefteqn{d_{TV}\left(\mathcal{L}(\tilde C_1(n),\dots,\tilde C_m(n)),\mathcal{L}(\tilde C_1(\infty),\dots,\tilde C_m(\infty))\right)}\\
&\leq & \p_{\bm\theta}\left((\tilde C_1(n),\dots,\tilde C_m(n))\neq (\tilde C_1(\infty),\dots,\tilde C_m(\infty))\right)\\
&\leq &  \p_{\bm\theta}\left(\{Y_{n-m+1}=\dots =Y_n=0\}^c\right) \p_{\bm\theta}(Y_{n+1}=0)\\
&& +\sum_{i\geq n}\p(Y_i=1)\p_{\bm\theta}\left(\{Y_{i+1}=\dots =Y_{i+m}=0\}^c\right)
\end{eqnarray*}
which is bounded by
\begin{eqnarray*}
&\leq & \frac{n}{n+\theta_{n+1}}\sum_{j=1}^m\frac{\theta_{n-m+j}}{n-m+j-1+\theta_{n-m+j}}+\sum_{\substack{i\geq n\\ 1\leq j\leq m}}\frac{\theta_i \theta_{i+j}}{(i-1+\theta_i)(i+j-1+\theta_{i+j})}\\
&\leq & \frac{n}{n+\theta_{n+1}}\sum_{j=1}^m\frac{\theta_{n-m+j}}{n-m+j-1+\theta_{n-m+j}}+m\sum_{i\geq n}\left(\frac{\theta_i}{i-1+\theta_i}\right)^2.
\end{eqnarray*}
For fixed $m\in \N$, the r.h.s. converges to $0$, as $n\to\infty$, if (\ref{eqcond4}) holds. Now if $\theta_n\leq c$ for $n\in \N$, the first and second terms on the right of the last inequality are bounded by $c\ m(n)/(n-m(n))$ and $m(n)\ c^2/(n-1)$. Therefore, the r.h.s. converges to $0$, if $m(n)=o(n)$.
\end{proof}
\begin{remark}\label{remark-C_1}
Note that $\{Y_{2i-1}Y_{2i}=1\}$ are independent for $i\in \N$, and also $\{Y_{2i}Y_{2i+1}=1\}$ are independent for $i\in \N$. Hence from Borel-Cantelli lemma, $\tilde C_1(\infty)<\infty$ a.s. if and only if $\gamma_{i,\infty}(\bm\theta)>0$ for $i\in \N$, if and only if (\ref{eqcond2}) holds.
\end{remark}
\begin{remark}
We can indeed show that for every $i\geq 2$, $\gamma_{i,n}(\bm\theta)\to\gamma_{i,\infty}(\bm\theta)>0$, as $n\to\infty$, if and only if (\ref{eqcond2}) and (\ref{eqcond3}) hold.

\begin{proof} The proof of sufficiency was given in Theorem~\ref{Y-limit-positive}, part $(ii)$. For the necessity part, consider $(p_i)_{i=3}^\infty$, $p_1=0, p_2=1$ s.t. $\theta_i=(i-1)q_i/(p_ip_{i-1})$, for $i\geq 3$. From (\ref{gammai}) and (\ref{pq}), for $n\geq 4$, we can write
\begin{equation}\label{gammmai2}
\gamma_n(\bm\theta)=\frac{\p_{\bm\theta}(Y_n=0)}{1+\theta_2}\prod_{j=2}^{n-2}\frac{p_j}{p_jp_{j+1}+q_{j+1}},
\end{equation}
where the limit of the product on the right of the last equation always exists, as $p_j/(p_j p_{j+1}+q_{j+1})\leq 1$, for $j\geq 2$. Furthermore, this limit is strictly positive if and only if
\begin{equation}\label{eqcond2-weaker}
\sum\limits_{j=3}^\infty\left(1-\frac{p_j}{p_j p_{j+1}+q_{j+1}}\right)=\sum_{j=3}^\infty \frac{q_jq_{j+1}}{p_j p_{j+1}+q_{j+1}}<\infty.
\end{equation}
On the other hand, as $p_{j-1} p_{j}+q_{j}\leq 1$,
\[
\begin{split}
\sum_{j=3}^\infty \frac{q_j q_{j+1}}{p_j p_{j+1}+q_{j+1}} \leq & \sum_{j=3}^\infty \frac{q_j q_{j+1}}{(p_{j-1} p_j+q_j)(p_j p_{j+1}+q_{j+1})}= \sum_{j=3}^\infty\frac{\theta_j\theta_{j+1}}{(j-1+\theta_i)(j+\theta_{j+1})},
\end{split}
\]
where the last equality follows by substituting $\theta_i=(i-1)q_i/(p_i p_{i-1})$, for $i\geq 3$. Thus (\ref{eqcond2}) implies (\ref{eqcond2-weaker}).

Now to complete the proof of the necessity part, note that from Theorem~\ref{Y-limit-positive}, part $(ii)$ and Remark~\ref{remark-C_1}, $\gamma_{i,\infty}(\bm\theta)=0$ if (\ref{eqcond2}) does not hold. On the other hand, if (\ref{eqcond2}) holds but $\lim_{n\to\infty}\p_{\bm\theta}(Y_n=0)$ does not exist, the limit of the product on the r.h.s. of (\ref{gammmai2}) is strictly positive, as (\ref{eqcond2-weaker}) holds. Therefore, from (\ref{gammmai2}), $\lim_{n\to\infty}\gamma_n(\bm\theta)$ does not exist. This shows that, if $\gamma_n\to \gamma_\infty>0$, as $n\to\infty$, then (\ref{eqcond2}) and (\ref{eqcond3}) both hold.

For the general case of $\gamma_{i,n}\to\gamma_{i,\infty}>0$, as $n\to\infty$, use the same argument for the shifted sequence $\bar{Y}_j=Y_{j+i-2}$, $j\geq 2$, with
\[
\p_{\bm\theta}(\bar{Y}_j=0)=\frac{\bar\theta_j}{j-1+\bar{\theta}_j}=\frac{\theta_{j+i-2}}{\theta_{j+i-2}+j+i-3},
\]
which is equivalent to take $\bar{\theta}_j=(j-1)\theta_{j+i-2}/(j+i-3)$, for $j\geq 2$. This finishes the proof of the remark.
\end{proof}
\end{remark}

The following theorem provides conditions under which $X^{\infty,\mathbf{p}}$ is well-defined, while its law coincides with the law of $Y^{\bm\theta}$ conditional on no fixed point.

\begin{theorem}\label{Thm-cond-law-infty}
Suppose that
\begin{enumerate}
\item[(i)] $\theta_n=(n-1)q_n/(p_np_{n-1})$, for $n\geq 3$;
\item[(ii)] as $n\to \infty$, $q_n\to 0$ (or equivalently $\theta_n/n\to 0$);
\item[(iii)] $\sum_{i=3}^\infty\frac{q_iq_{i+1}}{(q_i+p_ip_{i-1})(q_{i+1}+p_{i+1}p_i)}= \sum_{i=3}^\infty\frac{\theta_i\theta_{i+1}}{(i-1+\theta_i)(i+\theta_{i+1})}<\infty$.
\end{enumerate}
Then
\begin{equation}\label{condLawXY}
\mathcal{L}(X^{\infty,\mathbf{p}})=\mathcal{L}(Y^{\bm\theta}\mid\tilde C_1(\infty)=0).
\end{equation}

Moreover, under assumptions $(i)$, $(ii)$ and $(iii)$,
\begin{eqnarray*}
\p_{\mathbf{p}}(X^\infty_{i+1}=1\mid X_i^\infty=0)&=&1-\p_{\mathbf{p}}(X^\infty_{i+1}=0\mid X_i^\infty=0)\\[6pt]
&=&\frac{\theta_{i+1}\gamma_{i+2,\infty}(\bm\theta)}{\theta_{i+1}\gamma_{i+2,\infty}(\bm\theta)+(i+\theta_{i+1})\gamma_{i+1,\infty}(\bm\theta)},
\end{eqnarray*}
and 
$$
\p_{\mathbf{p}}(X^\infty_{i+1}=1\mid X_i^\infty=1)=1-\p_{\mathbf{p}}(X^\infty_{i+1}=0\mid X_i^\infty=1)=0.
$$
\end{theorem}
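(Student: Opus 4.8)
The plan is to exhibit both $X^{\infty,\mathbf p}$ and the conditioned sequence $Z:=(Y^{\bm\theta}\mid \tilde C_1(\infty)=0)$ as $\{0,1\}$-valued Markov chains that start from the value $1$ at index $1$, and then to check that their forward transition probabilities coincide; since the one-step transitions from each state determine the law of such a chain on $\{0,1\}^{\N}$, this yields (\ref{condLawXY}). First I record that both objects are well defined: condition $(ii)$ forces $q_n\to 0$, hence $p_n\to 1$ and $\sum_n p_n=\infty$, so $X^{\infty,\mathbf p}$ exists by Lemma~\ref{lemma-**}; condition $(iii)$ is exactly (\ref{eqcond2}), so Theorem~\ref{Y-limit-positive}$(ii)$ gives $\gamma_\infty(\bm\theta)=\p_{\bm\theta}(\tilde C_1(\infty)=0)>0$ together with $\gamma_{i,\infty}(\bm\theta)>0$ for all $i$, which makes the conditioning defining $Z$ legitimate.

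The transitions of $X^{\infty,\mathbf p}$ I would obtain by passing to the limit in the finite reversed chain. Under $(i)$ the one-step law of the reversed finite chain is (\ref{eqcond1}); since $(ii)$ and $(iii)$ are precisely (\ref{eqcond3}) and (\ref{eqcond2}), Theorem~\ref{Y-limit-positive}$(i)$ yields $\gamma_{i+2,n}(\bm\theta)\to\gamma_{i+2,\infty}(\bm\theta)$ and $\gamma_{i+1,n}(\bm\theta)\to\gamma_{i+1,\infty}(\bm\theta)$, with strictly positive limits by part $(ii)$, so the right-hand side of (\ref{eqcond1}) converges to the claimed ratio. On the other hand the left-hand side of (\ref{eqcond1}) equals $\p_{\mathbf p}(X^n_{i+1}=1)/\p_{\mathbf p}(X^n_i=0)\to\varphi_{i+1}/(1-\varphi_i)$, which is the defining transition (\ref{transition-infinite}) of $X^{\infty,\mathbf p}$. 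Equating the two limits proves the ``moreover'' formula; the transition from state $1$ is $0$ by the very definition of $X^{\infty,\mathbf p}$.

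It remains to compute the transitions of $Z$ and match them. Writing $D_j:=\p_{\bm\theta}(\sum_{k\ge j}Y_kY_{k+1}=0)$ for the probability of no $11$ among $Y_j,Y_{j+1},\dots$, independence of the $Y_i$ and the definition of $\gamma_{j,\infty}(\bm\theta)$ give $\gamma_{j,\infty}(\bm\theta)=\p_{\bm\theta}(Y_j=0)\,D_{j+1}$ and, conditioning on $Y_{i+1}$,
\[
D_{i+1}=\gamma_{i+1,\infty}(\bm\theta)+\p_{\bm\theta}(Y_{i+1}=1)\,\gamma_{i+2,\infty}(\bm\theta).
\]
For a prefix $z_1,\dots,z_m$ containing no $11$-pattern, factoring the event $\{\tilde C_1(\infty)=0\}$ at coordinate $m$ (its part involving $Y_{m+1},Y_{m+2},\dots$ is independent of the prefix) shows that
\[
\p_{\bm\theta}(Z_1=z_1,\dots,Z_m=z_m)=\frac{1}{\gamma_\infty(\bm\theta)}\Big(\prod_{l=1}^m\p_{\bm\theta}(Y_l=z_l)\Big)\,w_m(z_m),
\]
with $w_m(0)=D_{m+1}$ and $w_m(1)=\gamma_{m+1,\infty}(\bm\theta)$. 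Taking the ratio of this quantity at lengths $i+1$ and $i$ shows that $\p_{\bm\theta}(Z_{i+1}=z_{i+1}\mid Z_1,\dots,Z_i)$ depends only on $z_i$ (so $Z$ is Markov) and, for $z_i=0$, equals $\p_{\bm\theta}(Y_{i+1}=1)\gamma_{i+2,\infty}(\bm\theta)/D_{i+1}$; substituting the displayed expression for $D_{i+1}$ together with $\p_{\bm\theta}(Y_{i+1}=1)=\theta_{i+1}/(i+\theta_{i+1})$ recovers exactly the transition of $X^{\infty,\mathbf p}$ computed above, while from $z_i=1$ the transition to $1$ is $0$ since the prefix would then contain $Y_iY_{i+1}=1$. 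As both chains share the initial value and all one-step transitions, their laws agree, giving (\ref{condLawXY}).

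The step I expect to require the most care is the factorization of the infinite-future conditioning event $\{\tilde C_1(\infty)=0\}$: one must argue, via the independence of the $Y_i$ and the positivity $\gamma_{i,\infty}(\bm\theta)>0$ from Theorem~\ref{Y-limit-positive}$(ii)$, that conditioning on the whole tail reduces to the finite weights $w_m(z_m)$ and the quantities $D_j$, and hence that the resulting one-step probabilities are genuinely Markovian. Everything else is bookkeeping resting on the convergence $\gamma_{i,n}\to\gamma_{i,\infty}$ already supplied by Theorem~\ref{Y-limit-positive}.
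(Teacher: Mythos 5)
Your proposal is correct and follows essentially the same route as the paper: both derive the transitions of $X^{\infty,\mathbf{p}}$ by letting $n\to\infty$ in (\ref{eqcond1}) using the convergence and positivity of the $\gamma_{i,n}(\bm\theta)$ supplied by Theorem~\ref{Y-limit-positive}, and then identify the limiting ratio with the one-step conditional probabilities of $Y^{\bm\theta}$ given $\tilde C_1(\infty)=0$. The only difference is that you make explicit, via the prefix factorization with the weights $w_m(z_m)$, why the conditioned law is Markov with exactly those transitions --- a point the paper's proof leaves implicit when it rewrites the limit as a ratio of conditional probabilities and concludes.
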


\begin{proof}
We first notice that, assuming $(i)$
$$
\frac{(n-1)q_n}{n}\leq \frac{\theta_n}{n}= \frac{(n-1)q_n}{n p_np_{n-1}},
$$
hence $q_n\to 0$ if and only if $\theta_n/n\to 0$, as $n\to\infty$. Also the equality in $(iii)$ is a direct result of applying $(i)$. Now, from Theorem~\ref{Y-limit-positive}, $\gamma_\infty(\bm\theta),\gamma_{i,\infty}(\bm\theta)>0$, $i\geq 2$, and thus from Corollary~\ref{TV-converge} and Theorem~\ref{irvgeneral},
\begin{eqnarray*}
\p_{\mathbf{p}}(X^\infty_{i+1}=1\mid X_i^\infty=0)&=&\frac{\varphi_{i+1}(\mathbf{p})}{1-\varphi_i(\mathbf{p})}=\lim_{n\to \infty}\frac{\p_{\mathbf{p}}(X^n_{i+1}=1)}{\p_{\mathbf{p}}(X^n_{i}=0)}\\[4pt]
&=&\lim_{n\to \infty}\frac{\theta_{i+1}\gamma_{i+2,n}(\bm\theta)}{\theta_{i+1}\gamma_{i+2,n}(\bm\theta)+(i+\theta_{i+1})\gamma_{i+1,n}(\bm\theta)} \ \ (from~(\ref{eqcond1}))\\[4pt]
&=&\frac{\theta_{i+1}\gamma_{i+2,\infty}(\bm\theta)}{\theta_{i+1}\gamma_{i+2,\infty}(\bm\theta)+(i+\theta_{i+1})\gamma_{i+1,\infty}(\bm\theta)} \ \ (from \ \ Theorem~\ref{Y-limit-positive})\\[4pt]
&=&\frac{\frac{\theta_{i+1}}{i+\theta_{i+1}}\gamma_{i+2,\infty}(\bm\theta)}{\frac{\theta_{i+1}}{i+\theta_{i+1}}\gamma_{i+2,\infty}(\bm\theta)+\gamma_{i+1,\infty}(\bm\theta)}\cdot \frac{\gamma_i(\bm\theta)/\gamma_\infty(\bm\theta)}{\gamma_i(\bm\theta)/\gamma_\infty(\bm\theta)}\\[4pt]
&=&\frac{ \p_{\bm{\theta}}(Y_{i+1}=1,Y_i=0\mid \tilde C_1(\infty)=0)}{\p_{\bm{\theta}}(Y_i=0\mid \tilde{C}_1(\infty)=0)},
\end{eqnarray*}
for any $i\geq 2$, hence (\ref{condLawXY}) holds.
\end{proof}

\begin{remark}
Condition $(ii)$ ensures that $X^{\infty,\mathbf{p}}$ is well-defined and serves as the weak limit of $X^{n,\mathbf{p}}$ as $n\to \infty$. Assuming $(iii)$, we get $\gamma_\infty(\bm\theta), \gamma_{i,\infty}(\bm\theta)>0$, and the conditional relation (\ref{condLawXY}). Note that $(iii)$ in Theorem~\ref{Thm-cond-law-infty} holds for $\eta$ and $\tilde{\eta}$, so does the conditional relation (\ref{condLawXY}) for both.
\end{remark}

In the last section, the transition probabilities of $\eta^{\infty,\theta}$ were given in terms of $\varphi_n^\eta$. But it is still useful to find the exact value of $\delta_{i,\infty}$.

\begin{proposition}
For any $i\geq 4$ and $\theta>0$,
\begin{multline*}
   \delta_{i,\infty}(\theta):=\p_{\theta}(\xi_i+\sum_{j=i}^\infty\xi_j\xi_{j+1}=0)=M(1,\theta+i-1,-\theta) \ \frac{B(z_1(\theta)+i-3,z_2(\theta)+i-3)}{B(i-2,\theta+i-1)}. 
\end{multline*}
where $z_1(\theta),z_2(\theta)$ are defined as in Corollary~\ref{PGC}.
\end{proposition}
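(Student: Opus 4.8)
The plan is to avoid solving the tail recursion for $\delta_{i,\infty}$ directly and instead to recover it from quantities already computed in the paper, exploiting the infinite conditional relation of Theorem~\ref{Thm-cond-law-infty} together with the independence of the $\xi_j$. Write $\bm\theta^*$ for the parameter sequence of $\xi$, so that $\delta_{i,\infty}(\theta)=\gamma_{i,\infty}(\bm\theta^*)$, and recall from Theorem~\ref{Thm-cond-law-infty} (whose hypotheses hold for $\eta$, hence for $\xi$) that $\mathcal{L}(\eta^{\infty})=\mathcal{L}(\xi\mid \tilde{C}_1(\infty)=0)$. In particular,
\[
\p_\theta(\eta_i^\infty=0)=\p_\theta(\xi_i=0\mid \tilde{C}_1(\infty)=0)=\frac{\p_\theta(\xi_i=0,\ \tilde{C}_1(\infty)=0)}{\p_\theta(\tilde{C}_1(\infty)=0)}.
\]

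First I would factor the numerator. Since the $\xi_j$ are independent and the event $\{\xi_i=0\}$ renders the pairs $(i-1,i)$ and $(i,i+1)$ automatically free of the pattern $11$, the event $\{\xi_i=0,\ \tilde{C}_1(\infty)=0\}$ splits across the disjoint index blocks $\{1,\dots,i-1\}$ and $\{i,i+1,\dots\}$, giving
\[
\p_\theta(\xi_i=0,\ \tilde{C}_1(\infty)=0)=\rho_{i-1}\,\gamma_{i,\infty}(\bm\theta^*),
\]
where $\rho_{i-1}:=\p_\theta(\text{no }11\text{ in }\xi_1,\dots,\xi_{i-1})$. The denominator is $\p_\theta(\tilde{C}_1(\infty)=0)=\gamma_\infty(\bm\theta^*)$, which is positive by Theorem~\ref{Y-limit-positive}(ii). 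Solving for the unknown,
\[
\delta_{i,\infty}(\theta)=\gamma_{i,\infty}(\bm\theta^*)=\frac{\p_\theta(\eta_i^\infty=0)\,\gamma_\infty(\bm\theta^*)}{\rho_{i-1}}.
\]

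It then remains to insert three closed forms. By Lemma~\ref{marginal-eta} (cf.~(\ref{phi-eta})) the first factor is $\p_\theta(\eta_i^\infty=0)=M(1,\theta+i-1,-\theta)$. By Corollary~\ref{PGC} and Theorem~\ref{Y-limit-positive}(i), $\gamma_\infty(\bm\theta^*)=\lim_{n}\delta_n(\theta)=\frac{\theta^2+\theta+2}{1+\theta_2^*}B(z_1,z_2)$. Finally, conditioning on $\xi_{i-1}$ yields the finite identity $\rho_{i-1}=\gamma_{i-1}(\bm\theta^*)+\p_\theta(\xi_{i-1}=1)\,\gamma_{i-2}(\bm\theta^*)=\delta_{i-1}(\theta)+\frac{\theta^*_{i-1}}{i-2+\theta^*_{i-1}}\delta_{i-2}(\theta)$, where each of $\delta_{i-1},\delta_{i-2}$ is given in closed form by~(\ref{deltai}).

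The remaining work, and the main obstacle, is the Gamma-function bookkeeping that shows the ratio $\gamma_\infty(\bm\theta^*)/\rho_{i-1}$ collapses to the stated Beta quotient. The key facts are $z_1+z_2=\theta+3$ and $z_1z_2=\theta^2+\theta+2$, the factorization $k(k+1)+\theta(\theta+k)=(k-1+z_1)(k-1+z_2)$ used in Corollary~\ref{PGC}, and the absorptions $(\theta+i-2)\Gamma(\theta+i-2)=\Gamma(\theta+i-1)$ and $(i-4+z_\ell)\Gamma(i-4+z_\ell)=\Gamma(i-3+z_\ell)$. Carrying these through turns $\rho_{i-1}$ into $\frac{\Gamma(i-2)\Gamma(\theta+i-1)\Gamma(z_1+1)\Gamma(z_2+1)}{(1+\theta_2^*)\Gamma(\theta+3)\Gamma(i-3+z_1)\Gamma(i-3+z_2)}$, so that
\[
\frac{\gamma_\infty(\bm\theta^*)}{\rho_{i-1}}=\frac{\Gamma(i-3+z_1)\Gamma(i-3+z_2)}{\Gamma(i-2)\Gamma(\theta+i-1)}=\frac{B(z_1+i-3,z_2+i-3)}{B(i-2,\theta+i-1)},
\]
the last equality using $z_1+z_2+2i-6=\theta+2i-3$; multiplying by $M(1,\theta+i-1,-\theta)$ gives the claim. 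One should treat $i=4$ separately, since there $\rho_3$ involves the exceptional value $\delta_2=1/(1+\theta_2^*)$ rather than the generic formula~(\ref{deltai}); a direct check confirms $\rho_3=1/(1+\theta_2^*)$, matching the general expression above.
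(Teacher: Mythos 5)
Your proof is correct and takes essentially the same route as the paper: both exploit the conditional relation $\mathcal{L}(\eta^{\infty})=\mathcal{L}(\xi\mid \tilde C_1(\infty)=0)$, factor the conditioning event across the index $i$ using independence of the $\xi_j$ (prefix block times tail block over $\delta_\infty(\theta)$), solve for $\delta_{i,\infty}(\theta)$, and then collapse the closed forms of Corollary~\ref{PGC} via the factorization $k(k+1)+\theta(\theta+k)=(k-1+z_1(\theta))(k-1+z_2(\theta))$ and Gamma-function bookkeeping. The only real difference is the choice of marginal event: you condition on $\{\eta_i^\infty=0\}$, so the factor $M(1,\theta+i-1,-\theta)=\lim_{n\to\infty}\p_\theta(\eta_i^n=0)$ appears immediately from Lemma~\ref{marginal-eta} and the prefix factor is $\rho_{i-1}$, whereas the paper works with $\varphi_{i-1}=\p_\theta(\eta_{i-1}^\infty=1)$ and the prefix $\delta_{i-2}(\theta)\,\p_\theta(\xi_{i-1}=1)$, reaching the same Beta quotient only after the final simplification.
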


\begin{proof}
From Theorem~\ref{Y-limit-positive}, $\lim_{n\to\infty}\delta_{i,n}(\theta)=\delta_{i,\infty}(\theta)>0$, and in particular, from Corollary~\ref{PGC}, getting the limit of (\ref{deltai}), we have 
\begin{equation}\label{deltainft}
0<\delta_\infty(\theta)=\lim_{n\to\infty}\delta_n(\theta)=\frac{(\theta^2+\theta+2)\Gamma(z_1(\theta))\Gamma(z_2(\theta))}{(1+\theta_2^*)\Gamma(z_1(\theta)+z_2(\theta))}.
\end{equation}

Now, for $i\geq 4$,
$$
\varphi_{i-1}=\p_\theta(\eta_{i-1}^\infty=1)=\frac{\delta_{i-2}(\theta)\frac{\theta^*_{i-1}}{i-2+\theta^*_{i-1}}\delta_{i,\infty}(\theta)}{\delta_\infty(\theta)},
$$
recalling $\theta^*_i=\theta(1+\theta/(i-2))$, for $i\geq 4$, and $\theta^*_3=\theta$. Hence $\delta_{4,\infty}=(\theta+2)\varphi_3\delta_\infty/(\theta\delta_2)$ and
$$
\delta_{i,\infty}(\theta)=\frac{\varphi_{i-1}((i-2)(i-3)+\theta(i-3+\theta))\delta_\infty(\theta)}{\theta(i-3+\theta)\delta_{i-2}(\theta)}, \ \ i\geq 5.
$$
Applying (\ref{deltai}), (\ref{phi-eta}), (\ref{deltainft}), after simplification, this reduces to
\[
\frac{\theta_{(3)}B(z_1(\theta),z_2(\theta))(z_1(\theta))_{(i-3)}(z_2(\theta))_{(i-3)}}{(i-3)!\theta_{(i-2)}}\int_0^1e^{-\theta u}(1-u)^{\theta+i-3}du,
\]
which concludes the proposition, after further simplification.
\end{proof}

\begin{remark}
Note that $\delta_{2,\infty}(\theta)=\delta_\infty(\theta)$ given in (\ref{deltainft}), and $\delta_{3,\infty}(\theta)$ can be obtained using the recursion
$$
\gamma_{i,\infty}(\bm\theta)=\frac{i-1}{i-1+\theta_i}\left(\gamma_{i+1,\infty}(\bm\theta)+\frac{\theta_{i+1}}{i+\theta_{i+1}}\gamma_{i+2,\infty}(\bm\theta)\right),  \ \ i\geq 3.
$$
\end{remark}

\section{A coupling between $X$ and $Y$: a push-forward relation}\label{sec:coupling2}
We have seen so far that, for $\theta>0$, the conditional relation (\ref{condlaw}) does not hold between $\eta^{n,\theta}$ and the classic Feller coupling $(\tilde\xi_n^\theta,\cdots,\tilde\xi_1^\theta)$, given there is no $11$ patterns observed in the latter. In this section, we will see that in fact the distributions of these two are related by a simple push-forward relation, in the sense that the law of $\eta^{n,\theta}$ is the image of that of $(\tilde\xi_n^\theta,\cdots,\tilde\xi_1^\theta)$ under a natural $11$-erasing mapping. More generally, in this section, we establish a push-forward relation between $X^{n,\mathbf{p}}$ and $Y^{n,\bm\theta}$, for
\begin{equation}\label{couple}
p_i=\frac{i-1}{i-1+\theta_i},
\end{equation}
or equivalently $\theta_i=(i-1)q_i/p_i$, for $n\in \N$ and $3\leq i\leq n-1$. Under (\ref{condition-**}), which ensures the existence of the Markov chain $X^{\infty,\mathbf{p}}$, we also provide a similar coupling relation between $X^{\infty,\mathbf{p}}$ and $Y^{\bm\theta}$, where once again $p_i$ and $\theta_i$ are related by (\ref{couple}), for $i\geq 3$. We use the coupling relations to prove a central limit theorem for $K_n$.  We also see that when  $\theta_n \rightarrow\theta>0$, as $n\rightarrow \infty$, the asymptotic behavior of the joint distribution of the normalized cycle lengths of $X^{n,\textbf{p}}$, in order of their formation, can be studied through that of $Y^{n,\bm\theta}$. 

To make this precise, suppose (\ref{couple}) holds for $\theta_i$ and $p_i$, for $3\leq i\in \N$. For any $n\in \N$, and $\mathbf{y}=(y_1,y_2,\cdots)\in \{1\}\times \{0,1\}^{\N}$, let
\[
\begin{split}
\beta_i^n(\mathbf{y}):=& \mathbbm{1}\{y_i=1\} \cdot \max\{0\leq j\leq n-i-1 \ : \ \prod_{k=0}^j y_{i+k}=1\};\\
\beta_i^\infty(\mathbf{y}):=& \mathbbm{1}\{y_i=1\} \cdot \sup\{0\leq j \ : \ \prod_{k=0}^j y_{i+k}=1\}.\\
\end{split}
\]
In fact, $\beta_i^n(\mathbf y)$ and $\beta_i^\infty(\mathbf y)$ count the number of consecutive $1$'s right after $y_i=1$, in $(y_{i+1},\cdots,y_{n-1})$ and $(y_{i+1},y_{i+2},\cdots)$, respectively. Note that $\beta_i^\infty(\mathbf y)=\infty$ for some $i\in \N$, if and only if there exists a unique $r\leq i$ s.t. $(1-y_r)\prod_{j=1}^\infty y_{r+j}=1$. We now define the mappings $\chi_n$ and $\chi_\infty$ that erase the $11$ patterns in $Y^n$ and $Y$, starting from the end of the sequence. More explicitly, for $\mathbf{y}\in \{1\}\times\{0,1\}^\N$, and $n\in \N$, let $\Phi_1^n(\mathbf{y})=\Phi_1^\infty(\mathbf{y})=1$, $\Phi_2^n(\mathbf{y})=\Phi_2^\infty(\mathbf{y})=0$, $\Phi_j^n(\mathbf{y})=0$ for $j\geq n$, and let
\[
\begin{split}
&\Phi_i^n:=\mathbbm 1\{y_i=1, \beta_i^n(\mathbf{y}) \ \ \text{even}\}, \ \ 3\leq i\leq n-1;\\
&\Phi_i^\infty:=\mathbbm 1\{y_i=1, \beta_i^\infty\ \ \text{finite and even}\}, \ \ i\geq 3.
\end{split}
\]
It is clear from the definition that $\Phi^n$ and $\Phi^\infty$ indeed remove consecutive $1$'s (i.e. $11$ patterns) from $\mathbf y$, starting from the end. To make use of these for defining our mappings, for any $n\in \N$, let
\[
\overline\Delta_n:=\left\lbrace \mathbf y\in \{1\}\times \{0,1\}^{\N}, \ y_1=1, \ y_2+\sum_{j=2}^{n-1}y_j y_{j+1} +\sum_{j=n}^\infty y_j=0 \right\rbrace,
\]
\[
\Delta_\infty:=\left\lbrace \mathbf y\in \{1\}\times \{0,1\}^{\N}, \ y_1=1, \ y_2+\sum_{j=2}^{\infty}y_j y_{j+1}=0 \right\rbrace,
\]
and define $\chi_n: \{1\}\times \{0,1\}^{\N}\to \bar\Delta_n$ and $\chi_\infty: \{1\}\times \{0,1\}^{\N}\to \Delta_\infty$ by $\chi_n(\mathbf y)=(\Phi_i^n(\mathbf y))_{i=1}^\infty$ and $\chi_\infty(\mathbf y)=(\Phi_i^\infty(\mathbf y))_{i=1}^\infty$. For example, for ~$\mathbf{y}=(11111001111000...)$, we have ~$\chi_{11}(\mathbf y)=$  ~$(10101001010000...)$ and $\chi_{12}(\mathbf y)=(10101000101000...)$.

Let $F_i^{\bm\theta}(n)=\Phi_i^n(Y^{\bm\theta})$ and  $F_i^{\bm\theta}(\infty)=\Phi_i^\infty(Y^{\bm\theta})$ be the image of $Y^{\bm\theta}$ under these mappings. From the definition,
\[
\p(F_i^{\bm\theta}(n)=0 \mid F_{i+1}^{\bm\theta}(n)=0)=\p_{\bm\theta}(Y_i=0\mid \Phi_{i+1}^n(Y)=0)=\p_{\bm\theta}(Y_i=0) =p_i,
\]
\[
\p(F_i^{\bm\theta}(n)=0 \mid F_{i+1}^{\bm\theta}(n)=1)=\p_{\bm\theta}(Y_i\in \{0,1\}\mid \Phi_{i+1}^n(Y)=1)=1,
\]
which implies $\mathcal{L}(F_1^{\bm\theta}(n),F_2^{\bm\theta}(n),\cdots)=\mathcal{L}(X_1^{n,\mathbf{p}},X_2^{n,\mathbf{p}},\cdots)$, where $X_j^{n,\mathbf{p}}=0$, for $j\geq n$. Hence, for any $n\in \N$ and $\bm\theta$ and $\mathbf{p}$ related by (\ref{couple}),
$$
\mathcal{L}(X_1^{n,\mathbf{p}},X_2^{n,\mathbf{p}},\cdots)=\chi_n\ast\mathcal{L}(Y^{\bm\theta}),
$$
where the r.h.s. is the push-forward of the law of $Y^{\bm\theta}$ under $\chi_n$. Likewise, from the definition $\mathcal{L}(F_1^{\bm\theta}(\infty),F_2^{\bm\theta}(\infty),\cdots)=\chi_\infty\ast\mathcal{L}(Y^{\bm\theta})$. It follows from Borel-Cantelli lemma that $\p(\ \beta_i^\infty(Y^{\bm\theta})=\infty \ \text{for some} \ i\in \N)=0$ under (\ref{condition-**}). So for any $m\in \N$, $(F_1^{\bm\theta}(n),\cdots,F_m^{\bm\theta}(n))\to (F_1^{\bm\theta}(\infty),\cdots,F_m^{\bm\theta}(\infty))$ a.s., as $n\to\infty$, concluding $\mathcal{L}(X_1^{n,\mathbf{p}},X_2^{n,\mathbf{p}},\cdots)\Rightarrow \chi_\infty \ast \mathcal{L}(Y^{\bm\theta}).$ Therefore, we have the following representation of the limit chain
\[
\mathcal{L}(X^{\infty,\mathbf{p}})=\chi_\infty\ast\mathcal{L}(Y^{\bm\theta}).
\]

We can also see that, assuming (\ref{couple}), for any $i\geq 3$, we readily have
\[
\begin{split}
d_{TV}(\mathcal{L}(X^{n,\mathbf{p}}), \mathcal{L}(Y^{n,\bm\theta}))\leq & \p((F_1(n),\cdots, F_n(n))\neq (Y_1,\cdots,Y_n))\\
\leq & \p(Y_n^{\bm\theta}=1)+1-\gamma_{n}(\bm\theta),\\
\end{split}
\]
which leads to
\[
d_{TV}(\mathcal{L}(X^{\infty,\mathbf{p}}), \mathcal{L}(Y^{\bm\theta}))\leq 1-\gamma_{\infty}(\bm\theta),
\]
if we additionally assume (\ref{eqcond2}) and (\ref{eqcond3}).
In the rest of this paper, we assume that (\ref{couple}) holds and $X^{n,\mathbf{p}}$ and $Y^{n,\bm\theta}$ are coupled as explained above, i.e.  we assume $X^{n,\mathbf{p}}=(F_n(n),\cdots, F_1(n))$. 

\subsection{Central limit theorem for the number of cycles}\label{sec:clt}
The next theorem provides a central limit theorem for $X^{n,\mathbf{p}}$.
\begin{theorem}\label{thm:clt}
let $\bar q_n=\sum_{i=1}^n q_i$ and $\bar{\bar{q}}_n=\sum_{i=1}^n q_i^2$, and suppose $(\bar{\bar{q}}_n)^2/\bar q_n\to 0$ as $n\to \infty$. Then as $n\to \infty$,
$$
\frac{K_n^{\mathbf{p}}-\bar q_n}{\sqrt{\bar q_n}}\Rightarrow \mathcal{N}
$$
where $\mathcal{N}\sim Normal(0,1)$.
\end{theorem}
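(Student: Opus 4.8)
\section*{Proof proposal}

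The plan is to transfer the analysis from the derangement chain $X^{n,\mathbf{p}}$ to the generalized Feller coupling $Y^{\bm\theta}$, using the coupling $X^{n,\mathbf{p}}=\chi_n(Y^{\bm\theta})$ fixed in this section, under which $\p_{\bm\theta}(Y_i=1)=q_i$ for $3\le i\le n-1$ (the boundary indices $i\in\{1,2,n\}$ contributing only $O(1)$). Writing $\tilde K_n:=\sum_{i=1}^n Y_i$ for the number of $1$'s in $Y^{n,\bm\theta}$, the $Y_i$ are \emph{independent}, so $\e[\tilde K_n]=\bar q_n+O(1)$ and $\mathrm{Var}(\tilde K_n)=\bar q_n-\bar{\bar{q}}_n+O(1)$. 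Since $\chi_n$ produces $X^{n,\mathbf{p}}$ by erasing $1$'s from $Y^{\bm\theta}$, the difference $D_n:=\tilde K_n-K_n^{\mathbf{p}}$ is nonnegative and equals the number of erased $1$'s. The strategy is to prove a CLT for $\tilde K_n$ and then show $D_n/\sqrt{\bar q_n}\to 0$ in probability, so that $K_n^{\mathbf{p}}$ inherits the same limit.

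First I would record the consequences of the hypothesis $(\bar{\bar{q}}_n)^2/\bar q_n\to 0$. Because $q_1=1$, the sequence $\bar q_n\ge 1$ is nondecreasing and $\bar{\bar{q}}_n\ge 1$; were $\bar q_n$ bounded, $(\bar{\bar{q}}_n)^2/\bar q_n\to 0$ would force $\bar{\bar{q}}_n\to 0$, contradicting $\bar{\bar{q}}_n\ge 1$. Hence $\bar q_n\to\infty$. The hypothesis also reads $\bar{\bar{q}}_n=o(\sqrt{\bar q_n})$, and dividing by $\sqrt{\bar q_n}\to\infty$ gives $\bar{\bar{q}}_n/\bar q_n\to 0$; in particular $\mathrm{Var}(\tilde K_n)\sim\bar q_n$.

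For the CLT for $\tilde K_n$, I would apply the Lindeberg--Feller theorem to the independent, centered, uniformly bounded summands $Y_i-\e[Y_i]$, whose variances sum to $s_n^2:=\mathrm{Var}(\tilde K_n)\sim\bar q_n\to\infty$. Since $|Y_i-\e[Y_i]|\le 1$ while $s_n\to\infty$, for every $\varepsilon>0$ the truncation level $\varepsilon s_n$ eventually exceeds $1$, so the Lindeberg sum vanishes for large $n$ and the condition holds trivially. Thus $(\tilde K_n-\e[\tilde K_n])/s_n\Rightarrow\mathcal{N}$; replacing $\e[\tilde K_n]$ by $\bar q_n$ costs $O(1)/\sqrt{\bar q_n}\to 0$ and replacing $s_n$ by $\sqrt{\bar q_n}$ costs a factor $s_n/\sqrt{\bar q_n}\to 1$, so by Slutsky $(\tilde K_n-\bar q_n)/\sqrt{\bar q_n}\Rightarrow\mathcal{N}$.

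It remains to control $D_n$. Examining the effect of $\chi_n$ on a maximal run of consecutive $1$'s of length $\ell$, which is replaced by $\lceil\ell/2\rceil$ ones, shows that the number erased is $\lfloor\ell/2\rfloor\le\ell-1$, whence $D_n\le\sum_{i} Y_iY_{i+1}$ and $\e[D_n]\le\sum_{i=1}^{n-1}q_iq_{i+1}\le\bar{\bar{q}}_n$; the same bound $\e[D_n]=\e[\tilde K_n]-\e_{\mathbf{p}}[K_n]=O(\bar{\bar{q}}_n)$ also follows directly from the closed form in Lemma~\ref{kn}. By Markov's inequality, $\p(D_n>\varepsilon\sqrt{\bar q_n})\le\bar{\bar{q}}_n/(\varepsilon\sqrt{\bar q_n})\to 0$ since $\bar{\bar{q}}_n=o(\sqrt{\bar q_n})$, so $D_n/\sqrt{\bar q_n}\to 0$ in probability. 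Writing $(K_n^{\mathbf{p}}-\bar q_n)/\sqrt{\bar q_n}=(\tilde K_n-\bar q_n)/\sqrt{\bar q_n}-D_n/\sqrt{\bar q_n}$ and invoking Slutsky once more yields the claim. I expect the main obstacle to be the bookkeeping in the run analysis --- handling the truncation of runs at position $n$ so that $0\le D_n\le\sum_i Y_iY_{i+1}$ is justified exactly --- but since these corrections are $O(1)$ and the estimate $\e[D_n]=O(\bar{\bar{q}}_n)$ is independently available from Lemma~\ref{kn}, the conclusion is unaffected.
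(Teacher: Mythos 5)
Your proof is correct, and it follows the same skeleton as the paper's: couple $K_n^{\mathbf{p}}$ with the GFC count $\tilde K_n=\sum_i Y_i$ through the $11$-erasing map $\chi_n$, prove a CLT for the independent sum $\tilde K_n$, and show the nonnegative discrepancy $D_n=\tilde K_n-K_n^{\mathbf{p}}$ is $o_P(\sqrt{\bar q_n})$. However, you replace both of the paper's technical ingredients with more elementary ones, and the comparison is instructive. For the CLT of $\tilde K_n$, the paper invokes the Poisson approximation theorem of \cite{bh84}, obtaining $d_{TV}(\mathcal{L}(\tilde K_n), \mathrm{Po}(\bar q_n))\le \bar{\bar{q}}_n(1-e^{-\bar q_n})/\bar q_n\to 0$ and then passing to the normal limit of $\mathrm{Po}(\bar q_n)$; you apply Lindeberg--Feller directly to the bounded independent summands. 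Your route is self-contained and has the merit of making explicit that the hypothesis $(\bar{\bar{q}}_n)^2/\bar q_n\to 0$ forces $\bar q_n\to\infty$ (via $q_1=1$, so $\bar{\bar{q}}_n\ge 1$), a fact the paper uses silently when it converts the Poisson approximation into a normal limit; what the paper's route buys in exchange is a quantitative total-variation comparison of $\tilde K_n$ with a Poisson law, which is stronger than the limit theorem itself. For the discrepancy, the paper bounds $\tilde K_n - K_n\le(\tilde C_1(n)+1)/2$ and controls $\tilde C_1(n)-Y_n=\sum_i Y_iY_{i+1}$ by a second-moment computation and Chebyshev's inequality; you instead exploit $D_n\ge 0$ and use only the first moment, $\e[D_n]\le \bar{\bar{q}}_n+O(1)=o(\sqrt{\bar q_n})$, with Markov's inequality. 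This avoids the paper's variance calculation entirely, consumes the hypothesis at exactly the same point, and is legitimate precisely because the erasure coupling makes $D_n$ nonnegative. Your run-by-run analysis of $\chi_n$ (a maximal run of $\ell$ ones maps to $\lceil \ell/2\rceil$ ones, so $\lfloor \ell/2\rfloor\le \ell-1$ are erased) is the correct mechanism, and your treatment of the boundary positions $2$ and $n$ as an a.s.\ bounded correction is sound, so the gap you flag as a potential obstacle is not a real one.
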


\begin{proof}
To prove the central limit theorem for $K_n^{\mathbf{p}}$, we consider $\tilde K_n^{\bm \theta}$ coupled with $K_n^{\mathbf{p}}$, where $\mathbf{p}$ and $\bm\theta$ are related by (\ref{couple}). Form the way we coupled $K_n$ and $\tilde K_n$, we conclude $\tilde K_n^{\bm \theta}-K_n^{\mathbf{p}}\leq (\tilde C_1(n)+1)/2$. From~\cite{bh84}, Theorem 1, we get 
\begin{equation*}
d_{TV}(\mcL(\tilde K_n), Po(\bar q_n))\leq \bar{\bar{q}}_n(1-e^{-\bar q_n})/\bar{q}_n.
\end{equation*}
Hence, from the assumption $\bar{\bar{q}}_n/\bar{q}_n\to 0$, as $n\to \infty$, so $d_{TV}(\mcL(\tilde K_n), Po(\bar q_n))\to 0$ as $n\to \infty$. Therefore, as $n\to \infty$
$$
\frac{\tilde K_n-\bar q_n}{\sqrt{\bar q_n}}\Rightarrow \mathcal{N}.
$$

It now suffices to prove $(\tilde C_1(n)-Y_n)/\sqrt{\bar q_n}\to 0$, in probability, as $n\to \infty$.This implies $(\tilde K_n-K_n)/\sqrt{\bar q_n}\to 0$, in probability, as $n\to \infty$, and hence
$$
\frac{K_n-\bar q_n}{\sqrt{\bar q_n}}=\frac{\tilde K_n-\bar q_n}{\sqrt{\bar q_n}}-\frac{\tilde K_n-K_n}{\sqrt{\bar q_n}}\Rightarrow \mathcal{N}.
$$

To complete the proof,  write $\tilde C_1(n)-Y_n=\sum_{i=1}^{n-1} Y_iY_{i+1}$,
$$
\e_{\bm\theta}[(\tilde C_1(n)-Y_n)^2]=\sum_{i=1}^{n-1}q_iq_{i+1}+2\sum_{i=1}^{n-2}q_iq_{i+1}q_{i+2}+2\sum_{i,j} q_iq_{i+1}q_jq_{j+1},
$$
where the last term of the r.h.s. is over all $i,j$ such that $2\leq i+1<j\leq n-1$. Now as $q_i^2+q_{i+1}^2\geq 2 q_iq_{i+1}$, for $i\in \N$, we get
$$
\sum_{i=1}^{n-2}q_iq_{i+1}q_{i+2}\leq \sum_{i=1}^{n-1}q_iq_{i+1}\leq \bar{\bar{q}}_n,
$$

$$
\sum_{i,j} q_iq_{i+1}q_jq_{j+1}\leq \left(\sum_{i=1}^{n-1}q_iq_{i+1}\right)^2\leq (\bar{\bar{q}}_n)^2.
$$

Hence, for any $\varepsilon>0$, as $n\to \infty$,
$$
\p\left(\tilde C_1(n)-Y_n\geq \varepsilon\sqrt{\bar q_n}\right)\leq \frac{\e_{\bm\theta}[(\tilde C_1(n)-Y_n)^2]}{\varepsilon^2 \bar{q}_n}\leq \frac{3\bar{\bar{q}}_n+(\bar{\bar{q}}_n)^2}{\varepsilon^2\bar q_n}\to 0.
$$
\end{proof}

Recall the definition of $\psi(\mathbf{p})$ from (\ref{psifunction}). The following is an immediate application of the last theorem.

\begin{corollary}\label{cor:clt}
Suppose $nq_n\to \theta>0$. Then $(K_n-\bar{q}_n)/\sqrt{\bar q_n}\Rightarrow \mathcal{N}$, as $n\to\infty$. Furthermore if $|\psi_\theta(\mathbf{p})|<\infty$, then as $n\to \infty$
\begin{equation}\label{clttheta}
\frac{K_n-\theta\log n}{\sqrt{\theta\log n}}\Rightarrow \mathcal{N}.
\end{equation}
 In particular, (\ref{clttheta}) holds if there exists $c>0$ such that $|\theta_n-\theta|\asymp n^{-c}$.
\end{corollary}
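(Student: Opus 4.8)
The plan is to deduce the corollary from Theorem~\ref{thm:clt} in three moves: first verify that the hypothesis $(\bar{\bar q}_n)^2/\bar q_n\to 0$ is automatic once $nq_n\to\theta$; then convert the self-centered limit into the statement centered at $\theta\log n$ by a Slutsky argument using the finiteness of $\psi_\theta(\mathbf{p})$; and finally check that the quantitative hypothesis $|\theta_n-\theta|\asymp n^{-c}$ forces both $nq_n\to\theta$ and $|\psi_\theta(\mathbf{p})|<\infty$, so that the preceding two parts apply.

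For the first assertion, $nq_n\to\theta>0$ gives $q_i\sim\theta/i$, so by comparison with the harmonic series and the $\sum i^{-2}$ series, $\bar q_n=\sum_{i=1}^n q_i\sim\theta\log n\to\infty$ while $\bar{\bar q}_n=\sum_{i=1}^n q_i^2$ converges to a finite limit (dominated by $\theta^2\sum_i i^{-2}$). Hence $(\bar{\bar q}_n)^2$ stays bounded and $(\bar{\bar q}_n)^2/\bar q_n\to 0$, so Theorem~\ref{thm:clt} applies and $(K_n-\bar q_n)/\sqrt{\bar q_n}\Rightarrow\mathcal{N}$.

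For the second assertion, I would write
\[
\frac{K_n-\theta\log n}{\sqrt{\theta\log n}}=\frac{K_n-\bar q_n}{\sqrt{\bar q_n}}\cdot\sqrt{\frac{\bar q_n}{\theta\log n}}+\frac{\bar q_n-\theta\log n}{\sqrt{\theta\log n}}.
\]
The role of $|\psi_\theta(\mathbf{p})|<\infty$ is to control the deterministic discrepancy in the centering: since $\bar q_n-\theta\log n=\sum_{i=1}^n(q_i-\theta/i)+\theta(H_n-\log n)\to\psi_\theta(\mathbf{p})+\theta\gamma$, the centering $\bar q_n$ differs from $\theta\log n$ only by a bounded amount. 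Consequently $\bar q_n/(\theta\log n)\to 1$, so the first summand is $(K_n-\bar q_n)/\sqrt{\bar q_n}$ multiplied by a factor tending to $1$, while the second summand has a bounded numerator over a denominator tending to infinity and hence tends to $0$. Slutsky's theorem then yields~(\ref{clttheta}).

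For the last assertion, $|\theta_n-\theta|\asymp n^{-c}$ with $c>0$ gives $\theta_n\to\theta$, so with $q_i=\theta_i/(i-1+\theta_i)$ one has $nq_n=n\theta_n/(n-1+\theta_n)\to\theta$. It remains to verify $|\psi_\theta(\mathbf{p})|<\infty$, which is the one genuinely computational step and the main obstacle: writing $\theta_i=\theta+\varepsilon_i$ with $|\varepsilon_i|\asymp i^{-c}$, a short expansion gives
\[
q_i-\frac{\theta}{i}=\frac{\varepsilon_i(i-\theta)-\theta(\theta-1)}{i\,(i-1+\theta+\varepsilon_i)}=O\!\left(i^{-1-c}\right)+O\!\left(i^{-2}\right),
\]
which is absolutely summable since $c>0$; hence $\sum_i(q_i-\theta/i)$ converges to a finite value. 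The care needed here is precisely to track the $\varepsilon_i\,i$ contribution in the numerator, which is summable only because of the polynomial rate $n^{-c}$, together with the $O(i^{-2})$ remainder arising from the $i-1$ versus $i$ in the two denominators; everything else reduces to Slutsky's theorem and elementary asymptotics.
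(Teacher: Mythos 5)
Your proof is correct and follows exactly the route the paper intends: the paper offers no written proof, stating the corollary as ``an immediate application'' of Theorem~\ref{thm:clt}, and your three steps (checking $(\bar{\bar q}_n)^2/\bar q_n\to 0$ from $nq_n\to\theta$, the Slutsky recentering using $\bar q_n-\theta\log n\to\psi_\theta(\mathbf{p})+\theta\gamma$, and the expansion of $q_i-\theta/i$ under $|\theta_i-\theta|\asymp i^{-c}$ with $q_i=\theta_i/(i-1+\theta_i)$ from the coupling (\ref{couple})) are precisely the elided details, all verified correctly.
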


\subsection{ Ordered and longest cycles}\label{sec:orderedCycles}

Let $\tilde\A_j(n)$ be the length of the $j^{th}$ cycle in the natural order of the cycles determined by the GFC, with $\tilde\A_j(n)=0$ for $j> \tilde{K}_n$.  \ignore{Note that the probability that a $\bm \theta$-biased random permutation sampled from the GFC process has $\tilde K_n>r$ and $(\tilde \A_1(n),\cdots,\tilde{\A}_r(n))=(a_1,\cdots,a_r)$, with $a_1+\cdots+a_r=m<n$, is given by
$$
\frac{(\theta_{n+1-a_1}\theta_{n+1-a_1-a_2}\cdots\theta_{n+1-m})\bm\theta_{<n-m>}}{\bm\theta_{<n>}},
$$
and the number of such permutations is given by
\begin{multline*}
{{n-1}\choose{a_1-1}}(a_1-1)! {{n-1-a_1}\choose{a_2-1}}(a_2-1)! \cdots {{n-1-a_1\cdots-a_{r-1}}\choose{a_r-1}} (a_r-1)!\\\\
=\frac{n!}{n(n-a_1)\cdots(n-a_1-\cdots-a_{r-1})(n-m)!}.
\end{multline*}
Therefore,} It is easy to see that, for $r,a_1,\dots,a_r\in \N$, with $a_1+\cdots+a_r=m<n$, we have
\begin{multline}\label{ordercyclesY}
\p(\tilde\A_1(n)=a_1,\cdots ,\tilde\A_r(n)=a_r,\tilde K_n>r) \\= \frac{n!\bm\theta_{<n-m>}}{(n-m)!\bm\theta_{<n>}}\frac{\theta_{n+1-a_1}\theta_{n+1-a_1-a_2}\cdots\theta_{n+1-m}}{n(n-a_1)\cdots(n-a_1-\cdots-a_{r-1})}.
\end{multline}
Assuming $\theta_{n}\rightarrow \theta$ as $n\rightarrow \infty$, we have $\frac{\bm\theta_{<n-m>}}{\bm\theta_{<n>}}\sim \frac{\theta_{(n-m)}}{\theta_{(n)}}$, and therefore  it is straightforward from the corresponding result for the Feller Coupling that
\begin{eqnarray}
\lefteqn{
n^r \mathbb{P}(\tilde\A_1(n) =\lfloor nx_1\rfloor ,\tilde\A_2(n) = \lfloor nx_2\rfloor,\ldots,\tilde\A_r(n) = \lfloor nx_r\rfloor, K_n > r)} \nonumber\\
& \to & \frac{\theta^r (1 - x_1 - \cdots - x_r)^{\theta - 1}}{(1-x_1)(1 - x_1-x_2)\cdots(1-x_1-x_2 - \cdots - x_{r-1})} \label{limit}
\end{eqnarray}
for a fixed $r$, and $x_1,x_2,\cdots, x_r>0$ satisfying $x_1+\cdots+x_r<1$, which for $\theta_n\to\theta$ as $n\to\infty$, implies 
\begin{equation}\label{limit-A}
n^{-1}(\tilde\A_1(n),\tilde\A_2(n),\cdots)\Rightarrow(\A_1,\A_2,\cdots) \mbox{ as } n \to \infty,
\end{equation}
where $\A_1,\A_2,\cdots$ is GEM($\theta$), with density given in~\cite{abt03}, equation (5.28). 

Now, let $\A_j(n)$ be the length of the $j^{th}$ cycle in the natural order of the cycles determined by $X^{n,\mathbf{p}}$, with $\A_j(n)=0$ for $j>K_n$.  As in the GFC case,  we can deduce the asymptotic behavior of $n^{-1}(\A_1(n),\A_2(n),\ldots) $,  as stated in the following theorem.

\begin{theorem}\label{GEMlaw} Suppose $\theta_n\to\theta$ as $n\to \infty$. Then
$$
n^{-1}(\A_1(n),\A_2(n),\ldots) \Rightarrow (\A_1,\A_2,\ldots) \mbox{ as } n \to \infty.
$$
\end{theorem}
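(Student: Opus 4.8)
The plan is to transfer the GEM limit~(\ref{limit-A}) for the generalized Feller coupling $Y^{n,\bm\theta}$ to $X^{n,\mathbf{p}}$ through the coupling $X^{n,\mathbf{p}}=\chi_n(Y^{\bm\theta})$ established in this section. Since weak convergence in the product topology is equivalent to convergence of all finite-dimensional distributions, it suffices to show, for each fixed $r\in\N$, that $n^{-1}(\A_1(n),\ldots,\A_r(n))\Rightarrow(\A_1,\ldots,\A_r)$. In view of~(\ref{limit-A}), which gives $n^{-1}(\tilde\A_1(n),\ldots,\tilde\A_r(n))\Rightarrow(\A_1,\ldots,\A_r)$, I would reduce the theorem to the single claim
$$
\p\big(\A_j(n)=\tilde\A_j(n)\ \text{for all}\ 1\leq j\leq r\big)\to 1,\qquad n\to\infty,
$$
after which a standard converging-together argument finishes the proof.

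To prove this claim I would exploit that $\chi_n$ only erases $11$ patterns and never creates a new $1$, so it alters the cycle decomposition of $Y^{\bm\theta}$ only inside maximal runs of consecutive $1$s, each such run being a localized cluster of $1$-cycles. Concretely, if the top portion of $Y^{\bm\theta}$ — positions exceeding $\epsilon n$ — contains no $11$ pattern, then every $1$ there is isolated, has $\beta_i^n=0$, and is therefore retained by $\chi_n$; consequently the first $r+1$ ones read from position $n$ downward, and hence the first $r$ cycles, coincide for $X^{n,\mathbf{p}}$ and $Y^{\bm\theta}$. Two estimates drive the argument. First, because the components of $GEM(\theta)$ are a.s.\ strictly positive and sum to $1$, the partial sum $T_r:=\sum_{j=1}^r\A_j$ satisfies $T_r<1$ a.s.; given $\delta>0$ I can choose $\epsilon>0$ with $\p(T_r\geq 1-\epsilon)<\delta/2$, and then by~(\ref{limit-A}) together with the Portmanteau theorem applied to the closed set $[1-\epsilon,\infty)$, the first $r$ cycles of $Y^{\bm\theta}$ lie within positions exceeding $\epsilon n$ with probability at least $1-\delta/2$ for large $n$. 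Second, under the coupling~(\ref{couple}) with $\theta_n\to\theta$ one has $q_i=\theta_i/(i-1+\theta_i)\sim\theta/i$, so
$$
\p\big(\text{some }11\text{ pattern occurs at a position}>\epsilon n\big)\leq\sum_{i>\epsilon n}q_iq_{i+1}=O\Big(\sum_{i>\epsilon n}i^{-2}\Big)\to 0.
$$
Intersecting the two high-probability events yields the claim.

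The main obstacle is controlling the reordering of cycles induced by the $1$-cycle erasure: removing a $1$-cycle shifts every later cycle one position up in the age order, so an index-by-index comparison of $\A_j(n)$ with $\tilde\A_j(n)$ fails as soon as a small cycle precedes the $r$-th one. The resolution, and the technical heart of the proof, is precisely the observation above that for fixed $r$ the first $r$ cycles are macroscopic — each a positive $GEM$ fraction of $n$ — and therefore lie entirely in the sparse top region where, with probability tending to $1$, no two $1$s are adjacent; there $\chi_n$ acts as the identity and no reordering occurs. Finally, I would note that the boundary conventions of $\chi_n$ (namely $\Phi_j^n=0$ for $j\geq n$) perturb only $O(1)$ positions near the very top, which is harmless since $\A_1>0$ places the first $1$ at a macroscopic distance below position $n$ with high probability.
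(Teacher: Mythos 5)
Your proof is correct, but it takes a genuinely different route from the paper's. The paper's argument is a local-limit computation: it compares the exact point probability $\p(\A_1(n)=a_1,\ldots,\A_r(n)=a_r,K_n>r)$ for the Markov chain $X^{n,\mathbf{p}}$ with the GFC formula (\ref{ordercyclesY}), noting that the two differ only by a product of factors $\p(Y_n=0)\,\p(Y_{n-a_1}=0)\cdots\p(Y_{n-a_1-\cdots-a_{r-1}}=0)$ --- the positions immediately following the sentinel and each cycle-closing $1$, where $X$ is deterministically $0$ while $Y$ is only $0$ with high probability --- and that this product tends to $1$ when $a_i=\lfloor nx_i\rfloor$ with $x_1+\cdots+x_r<1$; the GEM limit then transfers directly from (\ref{limit}). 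You never touch these exact formulas; instead you push the erasure coupling $X^{n,\mathbf{p}}=\chi_n(Y^{\bm\theta})$ of (\ref{couple}) to its logical conclusion: with probability tending to $1$ the first $r$ cycle lengths of $X$ and $Y$ are literally equal, because (i) by (\ref{limit-A}) and Portmanteau the first $r$ cycle boundaries of $Y$ sit at positions above $\epsilon n$, (ii) the union bound $\sum_{i>\epsilon n}q_iq_{i+1}=O(1/(\epsilon n))$ excludes $11$ patterns in that region, so every $1$ there has $\beta_i^n=0$ and $\chi_n$ acts as the identity, and (iii) the boundary convention at position $n$ costs only $\p(Y_n=1)=q_n\to 0$; a converging-together argument then finishes. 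Both proofs consume the same input, namely the GFC limit (\ref{limit})/(\ref{limit-A}), but yours additionally yields the stronger coupling statement $\p(\A_j(n)=\tilde\A_j(n),\ 1\le j\le r)\to 1$ and is in the same spirit as the paper's own CLT argument (Theorem~\ref{thm:clt}), whereas the paper's computation is shorter once (\ref{ordercyclesY}) is available and gives convergence at the finer scale of point probabilities rather than only weak convergence. One cosmetic slip: your events guarantee that the first $r$ ones of $Y$ (not $r+1$, as you write) lie above $\epsilon n$, but that is exactly what is needed, since cycles $1,\ldots,r$ are delimited by the sentinel at position $n+1$ together with the first $r$ ones.
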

\begin{proof} For a fixed $r \in \N$ as $n\to\infty$, we show
\begin{eqnarray*}
\lefteqn{
n^r \mathbb{P}(\A_1(n) =m_1 ,\A_2(n) = m_2,\A_r(n) = m_r, K_n > r)}\\
& \to & \frac{\theta^r (1 - x_1 - \cdots - x_r)^{\theta - 1}}{(1-x_1)(1 - x_1-x_2)\cdots(1-x_1-x_2 - \cdots - x_{r-1})}
\end{eqnarray*}
for $a_i = \lfloor n x_i \rfloor, i = 1, \ldots,r$ satisfying $x_1,x_2,\ldots,x_r > 0, x_1 + \cdots + x_r < 1.$

We notice that the l.h.s. differs from the corresponding probability under the generalized Feller Coupling (l.h.s. in (\ref{limit})) by the reciprocal of 
\begin{multline*}
 \mathbb{P}(Y_n = 0,Y_{n-a_1} = 0, \ldots,Y_{n - a_1 - a_2 -\cdots - a_{r-1}} = 0, Y_2=0)\\
=
\frac{n-1}{(1+\theta_2)(n-1+\theta_{n})} \prod_{j=1}^{r-1} \frac{n-1-a_1- \cdots -a_{j}}{n-1-a_1 - \cdots - a_{j} + \theta_{n-a_1- \cdots -a_{j}}}\to 1   
\end{multline*}
 as $n \to \infty$. The result immediately follows from the corresponding result for the Feller Coupling.
\end{proof}

\ignore{
\begin{remark}
It follows from Theorem 3 of~\cite{dj89} that the ordered circle lengths $L_1(n) \geq L_2(n) \geq \cdots$
satisfy $$n^{-1}(L_1(n),L_2(n),\ldots) \Rightarrow (L_1,L_2,\ldots),$$ 
where $(L_1,L_2,\ldots)$ has the Poisson-Dirichlet law with parameter $\theta$. 
\end{remark}}

\medskip 
\noindent\textbf{Acknowledgments}
We thank Dr Kathy Ewens for bringing the children's playground game to our attention, and Dr. Will Stephenson for describing  a related game to us.

\bibliography{dSJT2022}

\begin{thebibliography}{10}

\bibitem{fig}
Playground game picture.
\newblock \url{http://primeiraprofessora.blogspot.com/}.
\newblock Accessed: 2022-09-12.

\bibitem{abt92}
R.~Arratia, A.~D. Barbour, and S.~Tavar\'e.
\newblock {P}oisson process approximations for the {E}wens {S}ampling
  {F}ormula.
\newblock {\em Ann. Appl. Probab.}, 2:519--535, 1992.

\bibitem{abt03}
R.~Arratia, A.~D. Barbour, and S.~Tavar\'e.
\newblock {\em Logarithmic Combinatorial Structures: a Probabilistic Approach}.
\newblock EMS Monographs in Mathematics. European Mathematical Society, 2003.

\bibitem{bh84}
A.~D. Barbour and P.~G. Hall.
\newblock On the rate of {P}oisson convergence.
\newblock {\em Math. Proc. Cambridge Philos. Soc.}, 95:473--480, 1984.

\bibitem{sjt2021}
P.~H. {da Silva}, A.~Jamshidpey, and S.~Tavaré.
\newblock The {F}eller coupling for random derangements.
\newblock {\em Stochastic Processes and their Applications}, 150:1139--1164,
  2022.

\bibitem{wje72}
W.~J. Ewens.
\newblock The sampling theory of selectively neutral alleles.
\newblock {\em Theoret. Popn. Biol.}, 3:87--112, 1972.

\bibitem{bioinformatics}
G.~Fertin, A.~Labarre, I.~Rusu, S.~Vialette, and E.~Tannier.
\newblock {\em Combinatorics of Genome Rearrangements}.
\newblock MIT press, 2009.

\bibitem{holst2007}
L.~Holst.
\newblock Counts of failure strings in certain {B}ernoulli sequences.
\newblock {\em Journal of Applied Probability}, 44:824–830, 2007.

\bibitem{holst2008}
L.~Holst.
\newblock The number of two consecutive successes in a {H}oppe-{P}ólya urn.
\newblock {\em Journal of Applied Probability}, 45:901–906, 2008.

\bibitem{hm2022}
T.~Huillet and M.~M{\"o}hle.
\newblock On {B}ernoulli trials with unequal harmonic success probabilities.
\newblock 2022.

\bibitem{ss2004}
J.~Sethuraman and S.~Sethuraman.
\newblock On counts of {B}ernoulli strings and connections to rank orders and
  random permutations.
\newblock In A.~DasGupta, editor, {\em A Festschrift for Herman Rubin},
  volume~45 of {\em IMS Lecture Notes-Monograph Series}, pages 140--152.
  Institute of Mathematical Statistics, 2004.

\bibitem{tavare2021}
S.~Tavaré.
\newblock The magical {E}wens {S}ampling {F}ormula.
\newblock {\em Bulletin of the London Mathematical Society}, 53:1563--1582,
  2021.

\end{thebibliography}
\bibliographystyle{plain}

\end{document}